\renewcommand*\sectionautorefname{\S\@gobble}
\renewcommand*\subsectionautorefname{\S\@gobble}
\renewcommand*\subsubsectionautorefname{\S\@gobble}
\mathchardef\mhyphen="2D
\DeclareMathOperator{\Hom}{Hom}
\DeclareMathOperator{\pt}{pt}
\DeclareMathOperator{\FinSet}{\mathbf{FinSet}}
\DeclareMathOperator{\Cat}{\mathbf{Cat}}
\DeclareMathOperator{\End}{End}
\DeclareMathOperator{\Rep}{Rep}
\newcommand{\Vect}{\mathbf{Vect}}
\DeclareMathOperator{\Id}{Id}
\DeclareMathOperator{\Sh}{Sh}
\DeclareMathOperator{\Irr}{Irr}
\DeclareMathOperator{\gr}{\mathbf{gr}}
\newcommand{\GL}{GL}
\newcommand{\CCC}{\mathcal{C}}
\newcommand{\CatAd}{\mathbf{2}\mhyphen\Vect_{\gr}}
\newcommand{\CatAdTen}{{\mathbf{2}\mhyphen\Vect_{\gr}}^{\otimes}}
\newcommand{\ZAd}{{\Zmod_{\gr}}}
\newcommand{\ZAdTen}{{\Zmod_{\gr}}^\otimes}
\newcommand{\base}[1]{#1_{base}}
\newcommand{\nobase}[1]{#1^{\circ}}
\newcommand{\FinDisj}{\FinSet^{\sqcup}}
\newcommand{\GSet}{G\operatorname{Set}}
\DeclareMathOperator{\PolyFun}{PolyFun}
\newcommand{\Hop}{\mathcal{H}}
\newcommand{\Heis}{\operatorname{Heis}}
\newcommand{\FF}{\mathbbm{F}}
\newcommand{\NN}{\mathbbm{N}}
\newcommand{\ZZ}{\mathbbm{Z}}
\DeclareMathOperator{\point}{\pt}
\newcommand{\kk}{\mathbbm{k}}
\newcommand{\CC}{\mathbbm{C}}
\newcommand{\PPS}[1]{{\mathcal{P}^{\otimes #1}}}
\newcommand{\CS}[1]{{\mathcal{C}^{\otimes #1}}}
\newcommand{\AAS}[1]{A^{\otimes #1}}
\newcommand{\PP}{\mathcal{P}}
\newcommand{\cartesianarrow}{\xrightarrow{+}}
\newcommand{\DDD}{\mathcal{D}}
\newcommand{\EndAd}{\End_{ad}}
\newcommand{\Zmod}{\ZZ\mhyphen\operatorname{Mod}}
\newcommand{\fset}[1]{[#1]}
\newcommand{\pset}[1]{[#1]_*}
\newcommand{\comma}{adCartesian\xspace}
\DeclareMathOperator{\Sym}{Sym}
\newcommand{\longequal}{=\joinrel=\joinrel=}
\newcommand{\tik}{\begin{tikzcd}}
\newcommand{\tak}{\end{tikzcd}}
\def\latearrow#1#2#3#4{%
  \toks@\expandafter{\tikzcd@savedpaths\path[/tikz/commutative diagrams/every arrow,#1]}%
  \global\edef\tikzcd@savedpaths{%
    \the\toks@%
    (\tikzmatrixname-#2)
    to%
    node[/tikz/commutative diagrams/every label] {$#4$}
    (\tikzmatrixname-#3)
;}}
\def\stik#1#2{
\begin{tikzpicture}[baseline= (a).base]%
\node[scale=#1] (a) at (0,0){%
\begin{tikzcd}[ampersand replacement=\&]%
#2
\end{tikzcd}%
};%
\end{tikzpicture}
}
\def\widestik#1#2{
\begin{tikzpicture}[baseline= (a).base]%
\node[scale=#1] (a) at (0,0){%
\begin{tikzcd}[ampersand replacement=\&,column sep=large]%
#2
\end{tikzcd}%
};%
\end{tikzpicture}
}
\def\nstik#1#2{
\begin{tikzpicture}[baseline= (a).base]%
\node[scale=#1] (a) at (0,0){%
\begin{tikzcd}[ampersand replacement=\&,row sep=small,column sep=large]%
#2
\end{tikzcd}%
};%
\end{tikzpicture}
}
\def\nnstik#1#2{
\begin{tikzpicture}[baseline= (a).base]%
\node[scale=#1] (a) at (0,0){%
\begin{tikzcd}[ampersand replacement=\&,row sep=tiny]%
#2
\end{tikzcd}%
};%
\end{tikzpicture}
}
\def\smallttik#1{
\begin{tikzpicture}[anchor=base, baseline,inner sep=0]%
\node[scale=0.6] (a) at (0,0){%
\begin{tikzcd}[ampersand replacement=\&]%
#1
\end{tikzcd}%
};%
\end{tikzpicture}
}
\newcommand{\mapstack}[2]{
\begin{tikzpicture}[anchor=base, baseline,inner sep=0, row sep=0]%
\node[scale=0.6] (b) at (0,0.3){
$#1$
};%
\node[scale=0.6] (a) at (0,0){%
$#2$
};%
\end{tikzpicture}
}
\newtheorem{Theorem}{Theorem}
\newtheorem{Proposition}{Proposition}[section]
\newaliascnt{Corollary}{Proposition}
\newtheorem{Corollary}[Corollary]{Corollary}
\newaliascnt{Lemma}{Proposition}
\newtheorem{Lemma}[Lemma]{Lemma}
\newtheorem* {Claim}{Claim}
\newtheorem{Conjecture}{Conjecture}
\theoremstyle{definition}
\newaliascnt{Definition}{Proposition}
\newtheorem{Definition}[Definition]{Definition}
\theoremstyle{remark}
\newaliascnt{Notation}{Proposition}
\newtheorem{Notation}[Notation]{Notation}
\newaliascnt{Remark}{Proposition}
\newtheorem{Remark}[Remark]{Remark}
\newtheorem* {Note}{Note}
\newaliascnt{Example}{Proposition}
\newtheorem{Example}[Example]{Example}
\newtheorem* {Observation}{Observation}
\begin{document}
\begin{abstract}
   Motivated by the work of of A. Zelevinsky on positive self-adjoint Hopf algebras, we define what we call a \emph{symmetric self-adjoint Hopf} structure for a certain kind of semisimple abelian categories. It is known that every positive self-adjoint Hopf algebra admits a natural action of the associated Heisenberg double. We construct canonical  morphisms lifting the relations that define this action on the algebra level and define an object that we call \emph{a categorical Heisenberg double} that is a natural setting for considering these morphisms. As examples, we exhibit the symmetric self-adjoint Hopf structure on the categories of polynomial functors and equivariant polynomial functors. In the case of the category of polynomial functors we obtain categorification of the Fock space representation of the infinite-dimensional Heisenberg algebra. 
   \end{abstract}

\title{Symmetric self-adjoint Hopf categories and a categorical Heisenberg double}
\author{Adam Gal, Elena Gal}
\date{\today}
\maketitle

\numberwithin{equation}{section}
\tableofcontents
\section{Introduction}
In this article we study a categorical analog of a certain class of Hopf algebras: namely Hopf algebras endowed with an inner product with respect to which the maps of multiplication and comultiplication are adjoint. Examples of such Hopf algebras arise naturally in many contexts. A principal example which we consider throughout the article is the Hopf algebra $\bigoplus_n K(\Rep(S_n))$ with multiplication and comultiplication given by induction and restriction functors and inner product given by the dimensions of the $\Hom$-spaces.  An abstract theory of this kind of  algebras was developed by Zelevinsky in the book \cite{Zelbook} under the name of positive self-adjoint Hopf (PSH) algebras. Certain representation-theoretic information, e.g. decomposition into irreducibles, can be derived from the abstract theory of PSH algebras.

All natural examples of PSH algebras have a categorical origin - in fact, they all appear as Grothendieck groups of categories of representations of sequences of groups or algebras. As such, it is logical to expect that the notion of PSH algebra has a categorical counterpart. In this article we propose a definition of such an object, which we call a symmetric self-adjoint Hopf (SSH) category. We apply this definition to show that any such category is equipped a canonical categorical action.   

To construct the categorification of the notion of PSH algebra we observe that the property of self-adjointness allows to essentially only consider the structure of multiplication in the algebra or the category. The Hopf axiom in this approach can be viewed as a property of the multiplication. 

On the categorical level this suggests to define a symmetric self-adjoint Hopf (SSH) category structure as a functor $\Hop$ from the category of finite sets $\FinSet$  to $\CatAd$ - the category of graded 2-vector spaces with 1-morphisms that admit adjoints. Such a functor defines the categorical analog of multiplication. $\Hop$ shouldn't preserve composition on the nose, so we have to work in a framework naturally allowing for these kind of functors. For the examples treated in this article it is practicable to define $\Hop$ as a functor of bicategories. To treat the Hopf axiom as a property of multiplication we require that Cartesian squares in $\FinSet$ should go to squares satisfying the Beck-Chevalley condition (\autoref{sec:BC}) in $\CatAd$. This allows us to associate an isomorphism to any image of a Cartesian square under $\Hop$. 
In particular, the categorical Hopf axiom is given by the image of a certain Cartesian square of finite sets under $\Hop$. This allows for a concise definition of self-adjoint Hopf category without resorting to the use of complicated coherence relations that appear in the literature on the subject. 
The notion of Hopf structure for semisimple abelian categories was first considered by Crane and Frenkel in  \cite{FrenkelCrane} and the coherence diagrams from their definition follow immediately in our framework (see \autoref{frenkelcranehopfcat}). 
Moreover, we show that in $\CatAd$ a square satisfies the Beck-Chevalley condition if and only if it is a certain $2$-categorical generalization of a Cartesian square (an \emph{\comma} square). In this sense the Hopf axiom can be thought as a continuity condition on $\Hop$, i.e. the requirement that $\Hop$ preserves \comma squares. The relevant definitions, as well as some properties of SSH categories are contained in \autoref{sec:SSHDef}.

Denote the finite set $\{1,\ldots,n\}$ by $\fset{n}$. We will say that $\Hop$ gives a structure of SSH category on the category $\Hop(\fset{1})$ and that $\Hop(\fset{1})$ is an SSH category. We show in \autoref{ssec:SSHbasicproperties} that the Grothendieck group of an SSH category has a canonical structure of PSH algebra. In this sense the notion of SSH category is a categorification of the notion of PSH algebra. The category $\Hop(\fset{1})$ has a canonically defined symmetric monoidal structure (hence the word \emph{symmetric} in the designation).

Two of the principal examples of PSH algebras considered in the book \cite{Zelbook} are the algebras $\bigoplus_n K(\Rep(S_n))$ and $\bigoplus_n K\left(\Rep(S_n[G])\right)$ where $S_n[G]$ is the wreath product $G^n\rtimes S_n$. The parallel examples of SSH categories that we consider in this work are the category $\PP$ of polynomial functors defined by Friedlander and Suslin in \cite{FrieSusPoly} and the category $\PP_G$ of equivariant polynomial functors defined in \autoref{ssec:defpequivariant}. In characteristic 0 both of these categories are semisimple and the SSH structure on them descends to the PSH structure on $\bigoplus_n K(\Rep(S_n))\cong K(\PP)$ and $\bigoplus_n K\left( \Rep(S_n[G])\right)\cong K(\PP_G)$ respectively. These examples are treated in \autoref{sec:p} and \autoref{sec:pequivariant}.

In \autoref{prop:heisdouble} we show (following Zelevinsky) that any positive self-adjoint Hopf algebra $A$ admits a canonically defined action of the Heisenberg double $(A,A)$. This action is given by multiplication by the elements of $A$ and their adjoints which satisfy certain relations. The main result of this work is the proof in \autoref{sec:heisenbergaction} that every SSH category admits a categorical analog of such an action. To elaborate, a certain reformulation of the relations between multiplication and comultiplication giving the Heisenberg double action in $\ref{heisenbergrelation}$ allows us to interpret the categorical action as a statement about the existence of a certain 2-isomorphism in Theorem \ref{th:deltam}. This 2-isomorphism arises as a consequence of certain squares - which are part of the SSH structure - being Beck-Chevalley squares. 

A remarkable feature of this isomorphism is that it arises on the category level from a canonical square with a non-invertible 2-morphism satisfying the Beck-Chevalley condition. Such a square is a purely categorical construct which is not detectable on the vector space level, where it descends to a non-commutative diagram. In other words the Heisenberg double action on the vector space level can only be formulated in terms of relations between multiplication and co-multiplication, whereas on the category level multiplication is enough and hence we are not required to choose an adjoint representing the co-multiplication.

To describe this approach in a more conceptual way we introduce a new category $\Heis{\Hop}$ that we call a \emph{categorical Heisenberg double} associated to the functor $\Hop$ equipped with a canonical projection to $\CatAd$. We use certain conjectural properties of this category stated in \S\ref{heuristics} to compute this square as an $\comma$ square in $\CatAd$. We conjecture that the category $\Heis{\Hop}$ is a categorification of the algebraic notion of Heisenberg double in the sense that the $2$-morphism structure of the action is given by the squares and relations between them in $\Heis(\Hop)$. It is our hope that $\Heis{\Hop}$ will be useful for constructing new canonical categorifications given the generalization of the notion of Hopf category to various frameworks such as dg- or stable $\infty$-categories.

The simplest and the most familiar example of the Heisenberg double construction is the Heisenberg algebra of infinite rank. It is the Heisenberg double associated to the PSH algebra $\Lambda$ of symmetric functions. Note that $\Lambda \cong \bigoplus_n K(\Rep(S_n))\cong K(\PP)$. The Heisenberg algebra is traditionally defined in the literature in terms of an infinite number of generators and relations; there are several common presentations, some of which, as well as their relation to the generator-independent definition used in this article we outline in \autoref{ssec:generators}. 
Khovanov proposed, in \cite{Khov}, an approach to categorification of the Fock space action of this algebra in a diagrammatic language of generators and relations for endofunctors of a category and $2$-morphisms between them. In \cite{hongyacobiPoly} Hong and Yacobi describe this action on the category $\PP$ of polynomial functors. Since $\PP$ is our basic example of an SSH category, it is natural to compare our construction of categorical Heisenberg double action to the former. In \autoref{ssec:FockP} we explicitly construct the morphism lifting the Heisenberg relation in this case.

In fact, the original motivation for this article was our desire to understand which parts of Khovanov's construction (and it's realisation on $\PP$ by \cite{hongyacobiPoly}) can be derived from the categorical version of a PSH structure via categorical constructions. The 2-isomorphisms we construct in Theorem \ref{th:deltam} are part of Khovanov's 2-morphism structure - these 2-morphisms are exactly what is needed to construct an embedding of the Heisenberg algebra into the Grothendieck group of Khovanov's Heisenberg category. Moreover, it is possible to recover all of the graphical calculus constructed in \cite{Khov} from an SSH structure on a category. Given a category with a SSH structure and a categorical analog of a primitive element in the algebra one can construct the Heisenberg action in the sense of Khovanov by choosing adjunction data satisfying certain compatibility conditions. This suggests a possible approach to proving Khovanov's conjecture about that the $K$-group of his categorification is exactly the Heisenberg algebra: we should show that the Heisenberg double construction commutes with taking $K$-group. Furthermore, the input category in the Heisenberg double construction can be changed quite flexibly and we plan in a future paper to consider this construction in the positive characteristic setting - as suggested by R. Rouquier.

In the article \cite{savageyacobi} by Savage and Yacobi the authors propose to consider the isomorphisms \eqref{list:categoricalFock} (which they call "categorified Mackey relations") for categorifying the Fock space representation of the Heisenberg doubles given by the $K$-groups of categories of modules over towers of algebras and construct these for a certain class of examples. It would be interesting to attempt to formulate their results in the generalization of the SSH categories framework described here to the non-semisimple category case, and this might provide additional insight into the structure inherent in these examples (see also \autoref{rem:leftright}). 

The Heisenberg algebra action and its categorification also appears naturally in several places (\cite{Grojnowski}, \cite{Nakajima} and the categorical version in \cite{CautisLicataHilbert}, also \cite{CautisLicataQuantum} to cite a few). It would be interesting to compare our construction to the existing results.

\subsection*{Generalization and future applications}
\label{rem:stable}
The setting of 2-vector spaces considered in this article is rather restrictive and doesn't include many important examples which should naturally exhibit an SSH structure, such as the category of polynomial functors over a field of positive characteristic. The Grothendieck group of this category won't be PSH, but will be a self-adjoint Hopf algebra. It seems that the correct generalization for this setting is that of stable $\infty$-categories. The main reason this seems reasonable as a generalization is that the equivalence between \comma and Beck-Chevalley squares should carry over easily to that setting. The problem with transferring everything directly to the stable $\infty$ setting is the lack of a framework for working with the $(\infty,2)$ category of all stable $\infty$ categories and computing things like \comma squares, or their appropriate generalization. The setting of stable $\infty$-categories would allow us to work with categories of complexes of objects, thus for example providing an approach to categorification of Boson-Fermion correspondence by using the connection to categorified Heisenberg action described below.

Zelevinsky's main theorem about PSH algebras is that any such algebra is isomorphic to the tensor product of many copies of the PSH algebra $\Lambda$ of symmetric functions (see \cite{Zelbook} \S2 for the precise formulation). His proof is somewhat combinatorial in nature, and gives the morphism to the tensor product only up to a non-canonical choice. In \autoref{sec:pequivariant} we construct a canonical equivalence of SSH categories $\PP_G$ and $\PP^{\otimes\Irr G}$ which descends to the isomorphism from Zelevinsky's theorem. It would be interesting to understand what analog does Zelevinsky's decomposition theorem have in general on the categorical level. As before one can hope to get especially interesting and new results outside of the semisimple setting. For example, in the case of the category of polynomial functors over a field of positive characteristic, the decomposition theorem doesn't hold on the vector space level, because the K-group lacks the positivity property. This case is interesting because of its connection to the theory of modular representations of symmetric groups. 

As we noted earlier, an SSH structure naturally defines a symmetric monoidal structure on the category. The reason for it is our choice of $\FinSet$ as the indexing category. Modifying the definition to get braided monoidal structure on the underlying category would produce a framework for dealing with interesting examples such as categories of representations of quantum groups, or of $\GL(n,\FF_q)$, and categorical objects connected to Hall algebras.

\subsection*{Funding}
This work was supported by the European Research Council [291612]; and by the Israel Science Foundation [533/14].

\subsection*{Acknowledgments} The idea of using the Beck-Chevalley condition was suggested to us by the lectures of Nick Rozenblyum in the Caesarea 2013 workshop on Grothendieck operations and $\infty$-categories. We thank him for several helpful discussions which helped shape our approach in this paper. We thank Oded Yacobi and Jiuzu Hong for introducing us to the subject of categorification and the notion of polynomial functors, Emily Riehl and Vladimir Hinich for helpful comments and Ross Street for suggesting ways to make our article more rigorous by using bicategories. We thank our colleagues Sasha Yom Din and Evgeny Musicantov for their mathematical and moral support. 

Finally, we thank our advisor Joseph Bernstein who influenced our approach to mathematics through many discussions on this and other topics.

\section{Notations}
\begin{itemize}
\item $\kk$ - a field of characteristic 0
\item $\Vect$ - the category of finite dimensional vector spaces over $\kk$
\item $\fset{n}$ - the set $\{1,2,\ldots,n\}$
\item $\pset{n}$ - the set  $\{1,2,\ldots,n, *\}$ in the category of pointed finite sets $\FinSet_*$
\item $\Sh(S)$ - the category of sheaves of finite dimensional vector spaces on $S$
\item identity morphisms will be drawn as $\longequal$ in diagrams
\item we will refer to a square of the form 
\smallttik{
A \ar{r} \ar{d} \& B \ar{d} \ar[Rightarrow,shorten <=0.7em,shorten >=0.7em]{dl}[above,sloped]{\alpha}\\
C \ar{r} \& D
}
as $\alpha$

\end{itemize}

\section{Symmetric self-adjoint Hopf categories}
\label{sec:SSHDef}
\subsection{The notion of positive self-adjoint Hopf algebra and its categorification}
In the book \cite{Zelbook} Zelevinsly defines positive self-adjoint Hopf (PSH) algebra as follows:
\begin{Definition}
\label{def:PSH}
A positive self-adjoint Hopf (PSH) algebra is a graded connected Hopf algebra over $\ZZ$ with an inner product and a distinguished finite orthogonal $\ZZ$ basis in each grade s.t. multiplication and comultiplication are adjoint and take elements with positive coefficients in the basis to elements with positive coefficients in the basis (that is to say, they are \emph{positive} maps).
\end{Definition}
Let us analyze this definition and restate it in a form which can be adapted to the category setting.

Given a PSH algebra $A$ we have an adjoint pair of multiplication and comultiplication maps which we will denote $m$ and $\Delta$:
\begin{align*}
A\otimes A \xrightarrow{m} A & {} & A\xrightarrow{\Delta}A\otimes A
\end{align*}
The Hopf axiom is the requirement that \[
\forall x,y\in A: \Delta(xy)=\Delta(x)\Delta(y)
\]
This can be restated as the commutativity of the following square:
\[
\stik{1}{
\AAS{4} \arrow[]{r}{\overline m} \arrow[leftarrow]{d}[left]{\Delta^{\otimes 2}}\& \AAS{2} \arrow[leftarrow]{d}{\Delta}\\
\AAS{2} \arrow[]{r}{m} \& A}
\]
where
\begin{align*}
\overline{m}(x\otimes y\otimes z \otimes w)&=m(x\otimes z)\otimes m(y \otimes w)\\
\Delta^{\otimes 2}(x \otimes y)&=\Delta(x)\otimes \Delta(y)
\end{align*}
Using the adjointness of $m$ and $\Delta$ this square can be obtained from the commutative square of multiplications
\begin{equation}
\label{squareHopf}
\stik{1}{} \stik{1}{
\AAS{4} \arrow[]{r}{\overline m} \arrow[]{d}[left]{m^{\otimes 2}}\& \AAS{2} \arrow[]{d}{m}\\
\AAS{2} \arrow[]{r}{m} \& A
}
\end{equation}
by replacing the verticals with their adjoints. This operation is called "taking the mate" of the square. We will say that a square satisfies the (1-categorical) \emph{Beck-Chevalley condition} if its mate commutes.

Observe that the square \eqref{squareHopf} corresponds to the Cartesian square of finite sets 
\begin{align*}
\stik{1}{
\AAS{4} \arrow[]{r}{\overline m} \arrow[]{d}[left]{m^{\otimes 2}} \& \AAS{2} \arrow[]{d}{m}\\
\AAS{2} \arrow[]{r}{m} \& A
}
& {} & \leftarrow & {} &
\stik{1}{
\fset{4} \arrow[]{r}{} \arrow[]{d}[left]{} \& \fset{2} \arrow[]{d}{} \ar[draw=none]{dl}[above,yshift=-0.5pc,xshift=-0.8pc,scale=2]{\ulcorner}\\
\fset{2} \arrow[]{r}{} \& \fset{1}
}
\end{align*}
where we denote the finite set $\{1,2,\ldots,n\}$ by $\fset{n}$. 
More generally, given the multiplication map $m:A\otimes A \rightarrow A$, we have for every map of finite sets $a:S\rightarrow T$ an obvious extension $m_{a}:\AAS{S} \rightarrow \AAS{T}$. 
The Hopf axiom implies that all squares formed using the above extension that correspond to Cartesian squares of finite sets, satisfy the Beck-Chevalley condition.

These observations allow us to reformulate the \autoref{def:PSH} as follows: Denote by $\ZAd$ the category of graded free $\ZZ$ modules with chosen finite basis in each grade and positive graded maps which admit positive graded adjoints (positivity and adjointness is considered with respect to the inner product induced by the chosen basis). Then the following holds:
\begin{Proposition}
\label{th:PSHfunctor}
A PSH algebra is the same as a functor \[
\mathcal{A}:\FinSet\xrightarrow{}\ZAd
\] which takes disjoint union to tensor product (i.e. a symmetric monoidal functor), sends morphisms in $\FinSet$ to degree zero morphisms and sends Cartesian squares to squares satisfying the Beck-Chevalley condition.
\end{Proposition}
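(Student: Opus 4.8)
The plan is to prove this as an equivalence of data, giving explicit mutually inverse passages between PSH algebras and functors with the three listed properties, and to verify that each structural axiom on one side matches a defining clause on the other.

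First I would go from a PSH algebra $A$ to a functor. Set $\mathcal{A}(\fset{n}) = \AAS{n}$, with the tensor-product grading and the induced orthogonal basis, and for a map of finite sets $a \colon S \to T$ define $m_a \colon \AAS{S} \to \AAS{T}$ by multiplying together, in each fibre $a^{-1}(t)$, the tensor factors it indexes (using the unit of $A$ on empty fibres). Here I would use that $A$ is commutative (as all of Zelevinsky's PSH algebras are), so that the product over an unordered fibre is unambiguous and $m_a$ is well defined. Functoriality $m_{b\circ a}=m_b\circ m_a$ is then precisely associativity and unitality of $m$; the identification $\mathcal{A}(S\sqcup T)=\mathcal{A}(S)\otimes\mathcal{A}(T)$ holds by construction, with the symmetry constraint matching the braiding of $\ZAd$ exactly because $A$ is commutative; and each $m_a$ preserves total degree, so morphisms of $\FinSet$ go to degree-zero morphisms. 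Positivity of $m_a$ and of its adjoint are inherited from those of $m$, so $\mathcal{A}$ genuinely lands in $\ZAd$.

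The only clause with real content is that Cartesian squares map to Beck--Chevalley squares. I would reduce an arbitrary Cartesian square of finite sets to the fundamental one underlying \eqref{squareHopf}. Every morphism of $\FinSet$ factors as a surjection (iterated multiplication, i.e.\ fibre-folding) followed by an injection (unit insertions), and every Cartesian square can be assembled by horizontal and vertical pasting from the elementary pullback square of $\fset{2}\to\fset{1}$ against itself together with trivial squares built from injections. By the calculus of mates --- the mate of a pasted square is the pasting of the mates --- the Beck--Chevalley condition is stable under such pasting, and it holds trivially for the injection/unit pieces. Hence it suffices that the single fundamental square be Beck--Chevalley, and, as rewritten in the discussion preceding the statement, that is exactly the Hopf axiom $\Delta(xy)=\Delta(x)\Delta(y)$. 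I expect this reduction --- the combination of the decomposition of Cartesian squares in $\FinSet$ into elementary pieces with the pasting-stability of mates --- to be the main obstacle; everything else is bookkeeping.

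Conversely, given a functor $\mathcal{A}$ with the three properties, I would put $A:=\mathcal{A}(\fset{1})$ and read off the multiplication $m:=\mathcal{A}(\fset{2}\to\fset{1})$ and unit $\mathcal{A}(\emptyset\to\fset{1})$ from $\mathcal{A}(\emptyset)=\ZZ$, then define $\Delta$ and the counit as their mates, which exist since morphisms in $\ZAd$ admit positive graded adjoints. Associativity, unitality and commutativity of $m$ come from the corresponding identities among maps of $\FinSet$ via functoriality and the symmetric monoidal structure; coassociativity and the counit axioms follow by passing to mates; self-adjointness is built into the definition of $\Delta$; positivity is the defining property of $\ZAd$; connectedness and the grading correspond to $\mathcal{A}(\emptyset)=\ZZ$ being the monoidal unit concentrated in degree zero; and the Hopf axiom is the Beck--Chevalley condition on the fundamental square. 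Finally, since $\mathcal{A}$ is symmetric monoidal and every object of $\FinSet$ is a disjoint union of copies of $\fset{1}$ while every morphism is built from multiplications and units, the entire functor is determined by $m$ and the unit alone; this shows the two passages are mutually inverse and completes the identification.
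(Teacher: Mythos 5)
Your overall strategy --- explicit constructions in both directions, with the Beck--Chevalley condition for general Cartesian squares obtained by pasting from the fundamental square --- is exactly what the paper has in mind (its own proof is a two-line sketch deferring to the preceding discussion), and your observation that commutativity of $A$ (a nontrivial theorem of Zelevinsky) is needed even to define $m_a$ is a point the paper only records later. The genuine gap is in your base cases. Your pasting induction bottoms out not only at the fundamental square and at squares with an identity edge, but also at the degenerate Cartesian squares involving $\emptyset$, such as
\[
\stik{1}{
\emptyset \arrow[equal]{r} \arrow[equal]{d} \& \emptyset \ar{d}\\
\emptyset \ar{r} \& \fset{1}
}
\qquad
\stik{1}{
\emptyset \ar{r} \arrow[equal]{d} \& T \ar{d}\\
\emptyset \ar{r} \& \fset{1}
}
\qquad
\stik{1}{
\emptyset \arrow[equal]{r} \ar{d} \& \emptyset \ar{d}\\
S \ar{r} \& \fset{1}
}
\]
and for these the Beck--Chevalley condition is \emph{not} trivial: computing the mates, it amounts respectively to $\epsilon\circ e=\Id_{\ZZ}$, to unitality of the comultiplication (the iterated $\Delta$ sends $1$ to $1\otimes\cdots\otimes 1$ in $\AAS{T}$), and to multiplicativity of the counit ($\epsilon\circ m_S=\epsilon^{\otimes S}$). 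These are precisely the unit/counit halves of the bialgebra axioms together with a connectedness-type identity; they do hold for a PSH algebra, but declaring them ``trivial for the injection/unit pieces'' hides exactly where the hypothesis \emph{connected Hopf algebra} --- rather than merely $\Delta(xy)=\Delta(x)\Delta(y)$ --- is used.

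The same oversight breaks your converse direction. You claim connectedness ``corresponds to $\mathcal{A}(\emptyset)=\ZZ$ being the monoidal unit concentrated in degree zero'', but $\mathcal{A}(\emptyset)\cong\ZZ$ is automatic for \emph{any} symmetric monoidal functor and carries no information whatsoever about $A_0=\mathcal{A}(\fset{1})_0$. The paper instead extracts connectedness from the Beck--Chevalley condition applied to the first square displayed above: commutativity of its mate says exactly $\epsilon\circ e=\Id_\ZZ$, and it is this identity, combined with positivity and the distinguished orthogonal bases (which force $e(1)$ to be a basis element), that recovers the isomorphism $A_0\cong\ZZ$. As written, your functor-to-PSH passage produces only a positive self-adjoint Hopf algebra with no control on $A_0$, so it does not land in PSH algebras as defined in \autoref{def:PSH}, and the two passages are not inverse. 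The repair is straightforward but necessary: add the three degenerate squares to your list of base cases, verify their Beck--Chevalley condition in the forward direction from unitality of $\Delta$, multiplicativity of $\epsilon$, and connectedness, and in the converse direction read connectedness off from the first of them, as the paper does.
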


The proof of \autoref{th:PSHfunctor} is straightforward in the view of the discussion above. Note in addition that the unit and counit maps are given by the image of the map $\emptyset \rightarrow \fset{1}$ and the connectedness follows from the fact that the image of the Cartesian square
\[
\stik{1}{
\emptyset \arrow[equal]{d} \arrow[equal]{r} \& \emptyset \ar{d}\\
\emptyset \ar{r} \& \fset{1}
}
\]
satisfies the Beck-Chevalley condition.

Given in this form, the definition of a PSH algebra can be categorified to define a class of objects which we call the \emph{symmetric self-adjoint Hopf categories}. In place of the category $\ZAd$ we will consider the bicategory $\CatAd$.

\begin{Definition} The objects of $\CatAd$ are non-negatively indexed sequences of 2-vector spaces, where a 2-vector space, as defined in \cite{2Vect}, is a semisimple $\kk$-category equivalent to a finite sum of copies of $\Vect$. We think of objects of $\CatAd$ as graded 2-vector spaces. The 1-morphisms are finite sums of exact morphisms of bounded degree and the 2-morphism are the natural transformations between those. \end{Definition}

\begin{Remark}
Note that 1-morphisms in $\CatAd$ admit left and right adjoints.
\end{Remark}


The natural analog  of the tensor of $\ZZ$-modules in $\CatAd$ is the Deligne tensor of categories from \cite{deligne}. To categorify the definition of PSH algebra we would like to replace the functor $\mathcal{A}:\FinSet\xrightarrow{}\ZAd$ with a pseudo functor of bicategories $\Hop:\FinSet \rightarrow \CatAd$ that takes the disjoint union of finite sets into the Deligne tensor of categories and sends Cartesian squares of finite sets to squares satisfying the Beck-Chevalley condition (see \autoref{sec:BC} for the general definition). Principally speaking, this definition replaces the commutative squares corresponding to the Hopf relation on the level of algebras with squares that commute up to isomorphisms forming a coherent system encoded in the functor from the category of finite sets.

In the next section we will formulate the requirement that $\Hop$ preserves tensor product in terms of it being a coCartesian functor of coCartesian fibrations of bicategories in the sense of \cite{Buckley}. This approach is motivated by the approach by J.Lurie in \cite{luriealgebra} to symmetric monoidal $(\infty,1)$-categories. Thinking about $\Hop$ as functor of fibrations leads to construction of morphisms categorifying the Heiseberg double relations in \autoref{sec:heisenbergaction}. 

\subsection{CoCartesian fibrations over the category of pointed finite sets}
Denote by $\FinSet_*$ the category of pointed finite sets. Since many objects of interest in this article are fibrations over $\FinSet_*$, we recall here some basic definitions relating to it.

Let $I$ be a pointed set. We denote $\nobase{I}$ the complement of the basepoint. A map $I\xrightarrow{\varphi} J$ in $\FinSet_*$ can be thought of as a partially defined map $\nobase{I}\rightarrow\nobase{J}$. i.e. a map $\nobase{I}\supseteq T\rightarrow \nobase{J}$.

\begin{Definition}
A map $I\rightarrow J$ is called \emph{active} if the preimage of the basepoint is the basepoint. In other words, a map is active if it is just a regular map of sets $\nobase{I}\rightarrow\nobase{J}$.
\end{Definition}

Denote by $(\FinSet_*)_{ac}$ the subcategory spanned by the active maps. There is an obvious equivalence $\FinSet\cong (\FinSet_*)_{ac}$. It is easy to see that calculating pullbacks in $(\FinSet_*)_{ac}$ is the same as calculating them in $\FinSet$. Note however that the imbedding $\FinSet\hookrightarrow \FinSet_*$ does \emph{not} preserve all limits. In particular, the final object of $\FinSet$ is not a final object in $\FinSet_*$.

\begin{Definition}[cf \cite{Buckley}]
Let $\CCC$ be a bicategory, $B$ a 1-category, and $p:\CCC\rightarrow B$ a functor.
\begin{enumerate}
\item Let $f:x\rightarrow y$ be an arrow in $B$ and $F:X\rightarrow Y$ an arrow in $\CCC$ with $p(F)=f$. Suppose that for any $Z\in\CCC$ we have that the square
\[
\stik{1}{
\CCC(Y,Z) \ar{r}{-\circ F} \ar{d} \& \CCC(X,Z) \ar{d} \\
B(p(Y),p(Z)) \ar{r}{-\circ f} \& B(p(X),p(Z))
}
\]
is a pseudo pullback of categories, then we say that $F$ is a coCartesian lift of $f$ starting at $X$.
\item We say that $p:\CCC\rightarrow B$ is a \emph{coCartesian fibration} if for any $f:x\rightarrow y$ and any $X$ such that $p(X)=x$ we have a coCartesian lift of $f$ starting at $X$.
\end{enumerate}
\end{Definition}

\begin{Remark}
In \cite{Buckley} there are further requirements on 2-cells which become trivial when the base is a 1-category. Also, the definition there is the dual one, giving Cartesian instead of coCartesian fibrations.
\end{Remark}
when working with a coCartesian fibration, we have the following slightly simpler description for coCartesian lifts following \cite{lurietopos}  Proposition 2.4.2.8:

\begin{Lemma} \label{lem:coCartInitial} Suppose that $p:\CCC\rightarrow B$ is a coCartesian fibration, then an arrow $F:X\rightarrow Y$ is a coCartesian lift over $f:x\rightarrow y$ iff it is initial in the category of arrows over $f$ with source $X$, i.e. that we can assume in the definition that $p(Z)\cong p(Y)$.
\end{Lemma}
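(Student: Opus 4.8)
The plan is to unwind the pseudo pullback condition into a family of equivalences between fibres and then reduce the general case to the case $p(Z)\cong p(Y)$ by comparison with an honestly chosen coCartesian lift.

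First I would rewrite what it means for $F\colon X\to Y$ over $f\colon x\to y$ to be a coCartesian lift. Since $B$ is a $1$-category, each hom-set $B(p(X),p(Z))$ is discrete, so $\CCC(X,Z)$ and $\CCC(Y,Z)$ split as disjoint unions of the fibres $\CCC(X,Z)_g$ (the full subcategories of $1$-morphisms sent by $p$ to a fixed $g$). The pseudo pullback $P$ of the cospan $\CCC(X,Z)\xrightarrow{p}B(x,p(Z))\xleftarrow{-\circ f}B(y,p(Z))$ then decomposes as $\coprod_{h}\CCC(X,Z)_{h\circ f}$, indexed by $h\in B(y,p(Z))$, and the canonical comparison functor sends $\CCC(Y,Z)_h$ into the $h$-th summand by $-\circ F$. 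Hence $F$ is a coCartesian lift of $f$ iff for every $Z$ and every $h\colon y\to p(Z)$ the functor
\[
-\circ F\colon \CCC(Y,Z)_h \longrightarrow \CCC(X,Z)_{h\circ f}
\]
is an equivalence of categories. Taking $p(Z)=y$ and $h=\mathrm{id}_y$ gives exactly the statement that $F$ is initial in the category $\mathcal F$ of arrows over $f$ with source $X$ (objects: $G\colon X\to W$ with $p(G)=f$; a morphism $G\to G'$: a vertical $u$ together with a $2$-isomorphism $u\circ G\cong G'$), because the pseudo-fibres of $-\circ F$ over such $W$ compute precisely the mapping categories $\mathcal F(F,-)$. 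This already yields the easy implication: a coCartesian lift is initial.

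For the converse I would use the hypothesis that $p$ is a coCartesian \emph{fibration}. Choose an honest coCartesian lift $F_0\colon X\to Y_0$ of $f$; by the easy direction it is initial in $\mathcal F$. Since $F$ is assumed initial as well, and initial objects are unique up to a canonical equivalence, there is a vertical equivalence $u\colon Y\to Y_0$ together with a $2$-isomorphism $u\circ F\cong F_0$. Precomposition with the equivalence $u$ is an equivalence $\CCC(Y_0,Z)\to\CCC(Y,Z)$ that preserves the fibre grading because $u$ is vertical, so for each $Z$ and $h$ the identification $-\circ F_0\cong(-\circ F)\circ(-\circ u)$ exhibits $-\circ F$ as a two-out-of-three factor of an equivalence; since $F_0$ is coCartesian, $-\circ F_0$ is an equivalence on every fibre, and therefore so is $-\circ F$. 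By the reformulation of the first step this means $F$ is a coCartesian lift, completing the equivalence.

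The main obstacle is the bookkeeping in the weak setting: I must set up $\mathcal F$ so that initiality genuinely corresponds to the $h=\mathrm{id}_y$ equivalences (keeping track of the pseudofunctor associativity coherences in $u\circ(H\circ F)\cong (u\circ H)\circ F$, and checking that morphisms in $\mathcal F$ carry $2$-isomorphisms rather than strict equalities), and I must justify that two bi-initial objects are connected by a vertical equivalence in this bicategorical $\mathcal F$. This is exactly the place where the weak condition ``$p(Z)\cong p(Y)$, $h=\mathrm{id}$'' is upgraded to the full condition ``all $Z$, all $h$'': the upgrade is not formal from the $h=\mathrm{id}$ equivalences alone, but is forced once a genuine coCartesian lift is available to compare against, which is why the fibration hypothesis, rather than the mere existence of $F$, is essential.
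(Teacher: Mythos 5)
Your proof is correct, but it is genuinely different in character from the paper's, which is not a worked argument at all: the paper simply asserts that the relevant part of the proof of Proposition 2.4.2.8 in Lurie's \emph{Higher Topos Theory} carries over word for word, with the needed stability of coCartesian fibrations under iso-comma pullback supplied by Buckley's Proposition 4.3.7. The mathematical engine is ultimately the same in both cases --- compare the given arrow $F$ against an honest coCartesian lift $F_0$ of $f$ (which exists by the fibration hypothesis), use uniqueness of bi-initial objects to produce a vertical equivalence $u$ with $u\circ F\cong F_0$, and transfer coCartesianness along $u$ by two-out-of-three --- but you replace the pullback machinery with a direct unwinding that exploits the fact that the base $B$ is a $1$-category: its hom-sets are discrete, so the pseudo pullback splits as a coproduct of strict fibres and coCartesianness becomes the family of fibrewise equivalences $-\circ F:\CCC(Y,Z)_h\to\CCC(X,Z)_{h\circ f}$. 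This buys a self-contained, elementary proof in which the role of the fibration hypothesis is completely explicit; what the paper's citation buys is brevity, and an argument that does not lean on discreteness of the base homs, hence would survive a generalization in which $B$ is itself a bicategory, where your coproduct decomposition is no longer available.

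One phrasing in your write-up overstates what you prove: ``taking $p(Z)=y$ and $h=\mathrm{id}_y$ gives \emph{exactly} the statement that $F$ is initial'' is only true in one direction. An equivalence of categories has contractible pseudo-fibres, so the fibrewise equivalence at $h=\mathrm{id}_y$ does imply initiality; but contractibility of all pseudo-fibres of a functor does not in general imply that it is an equivalence (the obvious functor from the two-object discrete category to the walking arrow has contractible pseudo-fibres but is not full), so initiality is a priori the weaker condition. This does not damage your argument, because your converse never uses the problematic direction: it routes through $F_0$ and the all-$h$ reformulation. But the word ``exactly'' should be weakened, since otherwise a reader might conclude that the lemma is formal and the fibration hypothesis redundant --- which, as your own closing paragraph correctly emphasizes, it is not.
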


\begin{proof}
The relevant part of the proof of \cite{lurietopos} Proposition 2.4.2.8 carries over word for word, using that coCartesian fibrations of bicategories are closed on iso-comma pullback, which is proven in \cite{Buckley} Proposition 4.3.7.
\end{proof}

The three coCartesian fibrations of interest to us are:

\begin{Definition}[$\FinDisj\xrightarrow{p}\FinSet_*$] $ $\\
\label{def:finsettensor}
\begin{itemize}
\item An object of $\FinDisj$ over $I\in\FinSet_*$ is a collection $(S_i)_{i\in I}$ of finite sets, with $\base{S}$ being a terminal object in $\FinSet$, i.e. a one point set.
\item A map $(S_i)\rightarrow(T_j)$ is a map $I\xrightarrow{\varphi}J$ in $\FinSet_*$ and a collection of functions of finite sets $S_i\rightarrow T_{\varphi(i)}$ for all $i\in I$.
\end{itemize}
\end{Definition}

\begin{Remark}
\label{rem:findisjalt}
Another way to see $\FinDisj$ is as the subcategory of arrows in $\FinSet_*$ spanned by the active maps, with the projection given by taking the target.
\end{Remark}

\begin{Definition}[$\CatAdTen\xrightarrow{p}\FinSet_*$] $ $\\
\label{SquaresInCatAd}
\label{def:CatAdTen}
\begin{itemize}
\item An object of $\CatAdTen$ over $I\in\FinSet_*$ is a collection $(\CCC_i)_{i\in I}$ of elements of $\CatAd$, with $\base{\CCC}$ being the final object in $\CatAd$, i.e. the zero category.
\item A map $(\CCC_i)\rightarrow(\DDD_j)$ over a map $I\xrightarrow{\varphi}J$ in $\FinSet_*$ is a collection of multi-exact \emph{bounded} functors ( i.e. sums of multi-exact functors of non-negative bounded degree).
\[
f_j:\prod_{\varphi(i)=j}\CCC_i\rightarrow \DDD_j
\]
\begin{Note}If $\varphi^{-1}(j)=\emptyset$, then we should give a map from the empty product, which is defined as $\Vect$, to $\DDD_j$. This is the same as specifying an object in $\DDD_j$.
\end{Note}
\item a 2-cell in $\CatAdTen$ between two maps $F,G:(\CCC_i)\rightarrow (\DDD_j)$ over $\varphi:I\rightarrow J$ is a collection of 2-morphisms in $\CatAd$ between the functors defining $F,G$, i.e. natural transformations $\varphi_j:F_j\rightarrow G_j$. 
\begin{Note}
When $\varphi^{-1}(j)=\emptyset$, then giving $\alpha_j:F_j\rightarrow G_j$ amounts to specifying a morphism between the specified objects.
\end{Note}
\end{itemize}
\end{Definition}

\begin{Definition}[$\ZAdTen\xrightarrow{p}\FinSet_*$] $ $\\
\label{def:ZAdTen}
\begin{itemize}
\item An object of $\ZAdTen$ over $I\in\FinSet_*$ is a collection $(V_i)_{i\in I}$ of free graded $\ZZ$ modules with a chosen finite basis in each degree, with $\base{V}$ being the 0 module.
\item A map $(V_i)\rightarrow(U_j)$ over a map $I\xrightarrow{\varphi}J$ in $\FinSet_*$ is a collection of multilinear \emph{bounded} (as above) maps
\[
f_j:\prod_{\varphi(i)=j}V_i\rightarrow U_j
\]
\begin{Note}If $\varphi^{-1}(j)=\emptyset$, this means specifying a map from $\ZZ$ to $U_j$, or just an element in $U_j$.
\end{Note}
\end{itemize}
\end{Definition}


\subsubsection{coCartesian lifts}
For simplicity let us consider what are coCartesian lifts over the unique active map $I\rightarrow\pset{1}$.

Using \autoref{lem:coCartInitial}, in $\FinDisj$ this is a collection of maps $S_i\rightarrow T$ which is initial among collections of maps $S_i\rightarrow T'$, i.e. a presentation of $T$ as the disjoint union (or coproduct) of the $S_i$. In the point of view of \autoref{rem:findisjalt} this is just a map $S\rightarrow T$ initial among all maps from $S$, so just an isomorphism. Clearly a lift exists for any map in $\FinSet_*$.

In $\CatAdTen$ this is a multi-exact graded functor $\prod\CCC_i\rightarrow \DDD$ with the property that
\[
\stik{1}{
\CatAdTen(D,E) \ar{r} \ar{d} \& \CatAdTen((C_i),E) \ar{d} \\
\FinSet_*(\pset{1},\pset{1}) \ar{r} \& \FinSet_*(I,\pset{1})
}
\]
is a pseudo pullback, which in this case just means the upper arrow is an equivalence.

This is precisely what it means to present $\DDD$ as the (graded) Deligne tensor of the $\{\CCC_i\}$ (see \cite{deligne}). From \cite{lopez} it follows that in $\CatAd$ the Deligne tensor always exists and so $\CatAdTen$ is a coCartesian fibration. Note also that in $\ZAdTen$ this is a presentation of a module as a tensor of several modules.

In all of our fibrations all coCartesian lifts can be constructed from the ones above. The only non-obvious case is the lift of the map $\pset{0}\rightarrow\pset{1}$ starting at the empty list. In $\FinDisj$ this is the map $()\rightarrow(\emptyset)$. In $\CatAdTen$ by definition a map from the empty list to a category is a map from $\Vect$ to that category, and so the lift is $()\rightarrow(\Vect)$ given by the identity map of $\Vect$. Similarly, note that in $\ZAdTen$ this is $()\rightarrow(\ZZ)$ given by the identity of $\ZZ$.

We will sometimes write $\boxtimes$ to denote coCartesian arrows in $\CatAdTen$ or $\ZAdTen$, and $+$ to denote coCartesian arrows in $\FinDisj$.


\subsection{Hopf categories}

We can now formulate our main definition

\begin{Definition}
\label{maindef}
A \emph{symmetric self-adjoint Hopf }(SSH) category is a morphism of fibrations $\FinDisj\xrightarrow{\Hop}\CatAdTen$, which
\begin{enumerate}
\item Takes maps in $\FinDisj$ to degree zero maps in $\CatAdTen$.
\item Is coCartesian, i.e. takes coCartesian arrows to coCartesian arrows.
\item Takes Cartesian squares in the fiber over $\pset{1}$ to squares satisfying the Beck-Chevalley condition.
\end{enumerate}
\end{Definition}

We will show in \autoref{sec:2Cartesian} that the Beck-Chevalley squares in $\CatAd$ can be viewed as a certain 2-categorical generalization of Cartesian squares that we call \comma squares. With this generalization in mind we can think of the functor $\Hop$ as preserving \comma squares and we will refer to this property as "the continuity" of $\Hop$.

We will say that $\Hop$ gives a structure of SSH category on $\Hop(\fset{1})$ and that $\Hop(\fset{1})$ is an SSH category.

Explicitly, Specifying the SSH category structure amounts to the following data:
\begin{itemize}
\item For every finite set $S$, a category $\Hop(S)$.
\item For every finite collection of maps of finite sets $(S_i\xrightarrow{\varphi_i}T)_{i\in I}$, a functor $\Hop(\varphi_i):\prod_i \Hop(S_i)\rightarrow \Hop(T)$.
\item For every commutative square 
\[
\stik{1}{
(S_i)_{i\in I} \arrow{d}\arrow{r} \& (Q_k)_{k\in K} \ar{d}\\
(T_j)_{j\in J} \arrow{r} \& U
}
\]
(where commutativity is understood separately for each $i$), a 2-commutative square 
\[
\stik{1}{
\Hop((S_i)_{i\in I}) \arrow{d}\arrow{r} \& \Hop((Q_k)_{k\in K}) \ar{d} \ar[Rightarrow,shorten <=0.7em,shorten >=0.7em]{dl}[above,sloped]{\sim}\\
\Hop((T_j)_{j\in J}) \arrow{r} \& \Hop(U)
}
\]

\end{itemize}

And it has to satify the following properties: \begin{itemize}
\item $3$-cells in $\FinDisj$ map to $3$-cells in $\CatAdTen$, e.g. commutative cubes go to commutative cubes.
\item coCartesian arrows map to coCartesian arrows.
\item Cartesian squares of sets in the fiber over $\pset{1}$ map to squares of categories satisfying the Beck-Chevalley condition.
\end{itemize}
\begin{Notation}
\label{fiberover1}
The restriction of the functor $\Hop$ to the fiber over $\pset{1}$ gives a functor that we will denote $\Hop^1:\FinSet \rightarrow \CatAd$ which takes Cartesian squares to Beck-Chevalley squares. One can think of the images of the maps in $\FinSet$ under $\Hop^1$ and their adjoints as the categorical analogs of the maps of multiplication and comultiplication in the Hopf algebra (for any number of variables). Hence we will denote the image of the arrows $S \xrightarrow{a} T$ by $m_a$ and the image of the arrow $\fset{2}\rightarrow \fset{1}$ will be denoted just by $m$. 

Let $\CCC=\Hop(\fset{1})$ (in other words $\Hop$ gives an SSH structure on the category $\CCC$). Then elsewhere in the article we will denote the image of a finite set $U$ under $\Hop$ by $\CS{U}$. This notation is motivated by the fact that, as we noted earlier, the image of $U$ should satisfy the universal property of being a Deligne tensor. 
\end{Notation}

\subsection{Basic properties}
\label{ssec:SSHbasicproperties}

Throughout, let $\Hop$ be a SSH structure on $\CCC$.

\begin{Proposition}
\label{frenkelcranehopfcat}
$\Hop$ induces on $\CCC$ the structure of a Hopf category in the sense of Frenkel and Crane from \cite{FrenkelCrane} (we recall the definition in the proof).
\end{Proposition}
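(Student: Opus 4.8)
Since the statement asks to show that an SSH structure $\Hop$ induces a Frenkel–Crane Hopf category structure on $\CCC = \Hop(\fset{1})$, the first thing I would do is recall the Frenkel–Crane definition explicitly (as the proposition promises to do in its proof). Roughly, a Hopf category in their sense consists of a monoidal category with a ``comultiplication'' functor and coherence 2-isomorphisms witnessing associativity, coassociativity, the unit/counit laws, and crucially the compatibility (Hopf) 2-isomorphism relating multiplication and comultiplication, all subject to a list of coherence diagrams (pentagon-type, the big hexagon/bialgebra coherence, etc.). The strategy is then to \emph{read off} each piece of this data from the functor $\Hop$ evaluated on specific finite sets and morphisms, and to obtain each coherence diagram as the image under $\Hop$ of a commutative diagram of finite sets.

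\textbf{Extracting the data.} First I would define the multiplication on $\CCC$ as $m = m_{a}$ where $a\colon \fset{2}\to\fset{1}$, using the notation of \autoref{fiberover1}, and the comultiplication as its adjoint $\Delta = m^{\wedge}$ (the left/right adjoint, which exists by the Remark that $1$-morphisms in $\CatAd$ admit adjoints). The unit and counit come from the image of $\emptyset\to\fset{1}$ and its adjoint, exactly as in the discussion following \autoref{th:PSHfunctor}. Associativity and coassociativity 2-isomorphisms arise from the fact that $\Hop$ is a pseudofunctor of bicategories: the two ways of building $m_{\fset{3}\to\fset{1}}$ from binary multiplications differ by the coherence 2-cells of the pseudofunctor, which supply the associator; dualizing gives the coassociator. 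The unit and counit coherence 2-cells come similarly from the pseudofunctor structure applied to the degeneracy/inclusion maps.

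\textbf{The Hopf compatibility.} The heart of the matter is the bialgebra/Hopf coherence 2-isomorphism comparing $\Delta\circ m$ with $(m\otimes m)\circ (\mathrm{id}\otimes\sigma\otimes\mathrm{id})\circ(\Delta\otimes\Delta)$. This is precisely the 2-isomorphism produced by applying the Beck–Chevalley condition to the image under $\Hop$ of the Cartesian square of finite sets
\[
\stik{1}{
\fset{4} \arrow{r} \arrow{d} \& \fset{2} \arrow{d}\\
\fset{2} \arrow{r} \& \fset{1}
}
\]
discussed around \eqref{squareHopf}: condition (3) of \autoref{maindef} guarantees this square goes to a Beck–Chevalley square, and taking the mate of the resulting 2-commutative square of multiplications (replacing the vertical $m^{\otimes 2}$ by its adjoint $\Delta^{\otimes 2}$) yields the required invertible 2-cell. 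The symmetry $\sigma$ that appears is the image of the transposition in $\FinSet$. I would verify that this 2-isomorphism is the one Frenkel–Crane demand, and that it is invertible because Beck–Chevalley squares in $\CatAd$ produce genuine 2-\emph{isomorphisms}.

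\textbf{Coherence and the main obstacle.} Finally, each coherence diagram in the Frenkel–Crane definition must be checked to commute. My plan is to exhibit every such diagram as the image under $\Hop$ of a diagram of finite sets that commutes (or, for the diagrams involving the Hopf 2-cell, as a pasting of Beck–Chevalley mates over a commuting diagram of Cartesian squares). The functoriality properties listed after \autoref{maindef}---that $3$-cells map to $3$-cells, so commutative cubes go to commutative cubes---are exactly what forces the relevant pastings of 2-cells to agree, since a Frenkel–Crane coherence identity is the assertion that two pastings around a cube (or higher-dimensional commuting configuration) of finite sets coincide. I expect the \emph{main obstacle} to be bookkeeping: matching the precise normalization and orientation of each Frenkel–Crane coherence 2-cell with the mate produced by Beck–Chevalley, and ensuring that the interchange of ``taking adjoints'' with ``pasting'' is compatible across all the squares simultaneously. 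Concretely, the subtle point is that Frenkel and Crane's diagrams are phrased directly in terms of $m$ and $\Delta$, whereas our $\Delta$ is only defined up to the choice of adjoint; so I would emphasize that each coherence diagram is independent of this choice precisely because it descends from a coherence diagram of Cartesian squares, and the Beck–Chevalley mate operation is natural in the relevant sense. Once the dictionary between Frenkel–Crane cells and images of finite-set diagrams is fixed, the commutativity of every coherence diagram follows formally from the pseudofunctoriality of $\Hop$ together with condition (3).
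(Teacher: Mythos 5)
Your proposal is correct and follows essentially the same route as the paper: the Hopf compatibility 2-isomorphism is obtained as the (left) mate of $\Hop$ applied to the Cartesian square $\fset{4}\rightarrow\fset{2}\rightarrow\fset{1}$ (invertible by condition (3) of \autoref{maindef}), and the Frenkel--Crane coherence cubes commute because they are mates of the image under $\Hop$ of a single commutative cube of finite sets, using that $\Hop$ preserves commutative cubes and that taking mates preserves cube commutativity. The paper is merely more explicit on the bookkeeping point you flag as the main obstacle: one coherence cube is produced by taking the left mate twice and the other by taking the right mate, with all faces invertible because each is either $\Hop$ of a square, a mate of $\Hop$ of a Cartesian square, or a double mate of $\Hop$ of a square.
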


\begin{proof}
The data of a Hopf category is the following:
\begin{itemize}
\item Functors $m:\CCC\otimes\CCC\rightarrow \CCC$ and $\Delta:\CCC\rightarrow\CCC\otimes\CCC$ which are (co)associative (i.e. monoidal and comonoidal structures on $\CCC$).
\item A 2-isomorphism
\[
\widestik{1}{
\CCC^{\otimes\fset{4}} \arrow[shorten >=0.4cm,shorten <=0.4cm,Rightarrow]{dr}[above,sloped]{\sim} \ar{r}{m_{13}\otimes m_{24}} \ar[leftarrow]{d}[left]{\Delta\otimes \Delta}  \& \CCC^{\otimes\fset{2}} \ar[leftarrow]{d}{\Delta} \\
\CCC^{\otimes\fset{2}} \ar{r}[below]{m} \& \CCC
}
\]

satisfying the relations imposed by requiring that the following cubes commute (abbreviating $n$ for $\CCC^{\otimes\fset{n}}$ and e.g $1,(23)$ for $\Id_1\otimes m_{23}$ and $\overline{1,(23)}$ for $\Id_1\otimes \Delta_{23}$):
\begin{align*}
\widestik{0.8}{
{} \& 3 \ar[leftarrow]{rr}{\overline{(12),3}} \ar[leftarrow]{dd}[sloped,xshift=0.5pc,yshift=0.5pc]{\overline{1,(23)}} \& {} \& 2 \ar[leftarrow]{dd}{\Delta}\\
6 \ar[crossing over,leftarrow]{rr}{\overline{(13),5,(24),6}} \ar[leftarrow]{dd}[left,sloped,yshift=-0.5pc,xshift=2pc]{\overline{1,2,(35),(46)}} \ar{ur}[sloped,xshift=2pc]{(12),(34),(56)} \& {} \& 4 \ar[leftarrow]{dd}[sloped,yshift=0.7pc,xshift=-1.5pc]{\overline{(12),(34)}} \ar{ur}[sloped,xshift=1pc]{(13),(24)} \& {}\\
{} \& 2 \ar[leftarrow]{rr}[xshift=-1pc]{\Delta} \& {} \& 1\\
4 \ar[leftarrow]{rr}{\overline{(13),(24)}} \ar{ur}[sloped,xshift=1pc]{(12),(34)} \& {} \& 2 \ar{ur}[below,sloped]{m}
\latearrow{commutative diagrams/crossing over}{4-3}{2-3}{}
} & {} & \widestik{0.8}{
{} \& 3 \ar{rr}{(12),3} \ar{dd}[sloped,xshift=0.5pc,yshift=0.5pc]{1,(23)} \& {} \& 2 \ar{dd}{m}\\
6 \ar[crossing over]{rr}{(13),5,(24),6} \ar{dd}[left,sloped,yshift=-0.5pc,xshift=2pc]{1,2,(35),(46)} \ar[leftarrow]{ur}[sloped,xshift=2pc]{\overline{(12),(34),(56)}} \& {} \& 4 \ar{dd}[sloped,yshift=0.7pc,xshift=-1.5pc]{(12),(34)} \ar[leftarrow]{ur}[sloped,xshift=1pc]{\overline{(13),(24)}} \& {}\\
{} \& 2 \ar{rr}[xshift=-1pc]{m} \& {} \& 1\\
4 \ar{rr}{(13),(24)} \ar[leftarrow]{ur}[sloped,xshift=1pc]{\overline{(12),(34)}} \& {} \& 2 \ar[leftarrow]{ur}[below,sloped]{\Delta}
\latearrow{commutative diagrams/crossing over}{2-3}{4-3}{}
}
\end{align*}
\end{itemize}

The 2-isomorphism is obtained as the left mate of $\Hop$ of the cartesian square of finite sets
\[
\stik{1}{
\fset{4} \ar{r}{} \ar{d}{} \& \fset{2} \ar{d}{}\\
\fset{2} \ar{r}{} \& \fset{1}
}
\]
The relation cubes commute since they are both mates of $\Hop$ of the cube of sets (see \autoref{sec:BC} for details regarding mates of cubes and squares)
\[
\widestik{1}{
{} \& \fset{3} \ar{rr}{(12),3} \ar{dd}[sloped,xshift=0.5pc,yshift=0.5pc]{1,(23)} \& {} \& \fset{2} \ar{dd}{}\\
\fset{6} \ar[crossing over]{rr}{(13),5,(24),6} \ar{dd}[left,sloped,yshift=-0.5pc,xshift=2pc]{1,2,(35),(46)} \ar{ur}[sloped,xshift=2pc]{(12),(34),(56)} \& {} \& \fset{4} \ar{dd}[sloped,yshift=0.7pc,xshift=-1.5pc]{(12),(34)} \ar{ur}[sloped,xshift=1pc]{(13),(24)} \& {}\\
{} \& \fset{2} \ar{rr}[xshift=-1pc]{} \& {} \& \fset{1}\\
\fset{4} \ar{rr}{(13),(24)} \ar{ur}[sloped,xshift=1pc]{(12),(34)} \& {} \& \fset{2} \ar{ur}[below,sloped]{}
\latearrow{commutative diagrams/crossing over}{2-3}{4-3}{}
}
\]
where e.g. by $1,2,(35),(46)$ we mean the map $1\mapsto 1;2\mapsto 2;3,5\mapsto 3;4,6\mapsto 4$.

The left one is obtained by taking the left mate twice, and the right one by taking the right mate. All faces end up invertible because they are all either mates of $\Hop$ of cartesian squares or $\Hop$ of squares, or double mates of $\Hop$ of squares. 
\end{proof}

\begin{Remark}
The cube in the above proof can be recovered as follows:
\begin{enumerate}
\item Consider the associator square:
\[
\stik{1}{
\fset{3} \ar{d}[left,sloped,yshift=-0.5pc,xshift=1pc]{1,(23)} \ar{r}{(12),3} \& \fset{2} \ar{d} \\
\fset{2} \ar{r} \& \fset{1}
}
\]
\item Add the map $\fset{2}\rightarrow\fset{1}$ in a transversal direction:
\[
\stik{1}{
{} \& \fset{3} \ar{rr}{(12),3} \ar{dd}[sloped,xshift=-1.5pc,yshift=0.5pc]{1,(23)} \& {} \& \fset{2} \ar{dd}{}\\
{} \& {} \& {} \& {}\\
{} \& \fset{2} \ar{rr}[xshift=-1pc]{} \& {} \& \fset{1}\\
{} \& {} \& \fset{2} \ar{ur}[below,sloped]{}
}
\]
\item Form the pullbacks on the right and bottom:
\[
\widestik{1}{
{} \& \fset{3} \ar{rr}{(12),3} \ar{dd}[sloped,xshift=-1.5pc,yshift=0.5pc]{1,(23)} \& {} \& \fset{2} \ar{dd}{}\\
{} \& {} \& \fset{4} \ar{dd}[sloped,yshift=0.7pc,xshift=-1.5pc]{(12),(34)} \ar{ur}[sloped,xshift=1pc]{(13),(24)} \& {}\\
{} \& \fset{2} \ar{rr}[xshift=-1pc]{} \& {} \& \fset{1}\\
\fset{4} \ar{rr}{(13),(24)} \ar{ur}[sloped,xshift=1pc]{(12),(34)} \& {} \& \fset{2} \ar{ur}[below,sloped]{}
\latearrow{commutative diagrams/crossing over}{2-3}{4-3}{}
}
\]
\item Form the pullback cube.
\end{enumerate}
\end{Remark}

\begin{Proposition}
$\Hop(\emptyset)$ is canonically equivalent to $\Vect$.
\end{Proposition}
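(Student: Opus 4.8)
The plan is to realize $\emptyset$ as the target of a canonical coCartesian arrow and transport that description through $\Hop$. View $\emptyset$ as the object of the fiber of $\FinDisj$ over $\pset{1}$ whose single non-basepoint component is the empty set. As recorded in the discussion of coCartesian lifts, $\emptyset$ is precisely the target of the coCartesian lift $()\to(\emptyset)$ of the unique map $\pset{0}\to\pset{1}$ in $\FinSet_*$, starting at the empty list $()$ in the fiber over $\pset{0}$. So I would begin by isolating this coCartesian arrow rather than trying to describe $\Hop(\emptyset)$ directly.

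Next I would push this arrow forward. Since the fibers of both $\FinDisj$ and $\CatAdTen$ over $\pset{0}$ are terminal (each containing only the empty list $()$), the morphism of fibrations $\Hop$ necessarily sends $()$ to $()$. Because $\Hop$ takes coCartesian arrows to coCartesian arrows (condition (2) of \autoref{maindef}), the image $\Hop(()\to(\emptyset))$ is a coCartesian lift of $\pset{0}\to\pset{1}$ starting at $()$ in $\CatAdTen$, with target $\Hop(\emptyset)$.

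The final step is to invoke uniqueness of coCartesian lifts. By \autoref{lem:coCartInitial}, a coCartesian lift over a fixed base arrow with a fixed source is initial in the category of arrows over that base map with that source; in particular any two such lifts are linked by a canonical equivalence of their targets. As computed in the discussion of coCartesian lifts above, the standard coCartesian lift of $\pset{0}\to\pset{1}$ starting at $()$ in $\CatAdTen$ is $()\to(\Vect)$, the arrow determined by the identity of $\Vect$ (using that a map out of the empty list into a category is the same as an object of that category). Comparing the two lifts yields the desired equivalence $\Hop(\emptyset)\simeq\Vect$.

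I do not expect a genuine obstacle here: the statement is a formal consequence of $\Hop$ preserving coCartesian arrows together with the explicit identification of the relevant lift in $\CatAdTen$. The only point requiring care is that the comparison be genuinely \emph{canonical} rather than merely an abstract equivalence, and this is exactly what initiality in \autoref{lem:coCartInitial} supplies, since the unique-up-to-canonical-isomorphism property of initial objects produces a distinguished equivalence between the two targets.
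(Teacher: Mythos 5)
Your proposal is correct and follows essentially the same route as the paper: both identify $()\to(\emptyset)$ as a coCartesian lift of $\pset{0}\to\pset{1}$ in $\FinDisj$, use condition (2) of \autoref{maindef} to push it forward, and compare with the standard coCartesian lift $()\to(\Vect)$ in $\CatAdTen$ via uniqueness of coCartesian lifts. Your explicit appeal to \autoref{lem:coCartInitial} for canonicity is a slightly more careful packaging of what the paper asserts directly, but it is the same argument.
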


\begin{proof}
Let $\alpha:\pset{0}\rightarrow\pset{1}$ be the unique map. In $\CatAdTen$, over $\pset{0}$, there is only the list $(\base{C})$ consisting of the trivial category. Therefore, a map in $\CatAdTen$ over $\alpha$ is a map $(\base{C})\rightarrow (\base{C},D)$, which amounts to a functor from the empty product to $D$, i.e. the specification of an object in $D$.

We have:
\begin{enumerate}
\item The arrow $(\base{S})\rightarrow (\base{S},\emptyset)$ in $\FinDisj$ is a coCartesian arrow over $\alpha$.
\item The arrow $(\base{C})\rightarrow (\base{C},\Vect)$ in $\CatAdTen$ which sends the empty product to  $\kk$, is a coCartesian arrow over $\alpha$.
\end{enumerate}

The first is obvious, and second is just saying that a choice of an element in $D$ is the same as a functor $\Vect\rightarrow D$. 

As a result, the first arrow must go to an arrow which is canonically equivalent to the second arrow and in particular $\Hop(\emptyset)$ is canonically equivalent to $\Vect$.
\end{proof}

\begin{Proposition}
$\Hop$ defines a symmetric monoidal structure on $\CCC$.
\label{prop:symmonstruct}
\end{Proposition}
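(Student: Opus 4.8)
The plan is to recognise that an SSH structure is, by conditions (1) and (2) of \autoref{maindef}, precisely a \emph{symmetric monoidal pseudofunctor} $(\FinSet,\sqcup)\to(\CatAd,\boxtimes)$, and then to exploit the fact that the monoidal unit $\fset 1$ carries a canonical commutative monoid structure that any such functor must transport to $\CCC$. In detail: a morphism of coCartesian fibrations over $\FinSet_*$ that preserves coCartesian arrows is the same datum as a strong symmetric monoidal functor between the associated symmetric monoidal bicategories, where the symmetric monoidal structure of $\FinSet$ is the cocartesian one ($\sqcup$ is the coproduct, $\emptyset$ the unit) and that of $\CatAd$ is the Deligne tensor $\boxtimes$ with unit $\Vect$. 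Thus I would first record that $\Hop$ restricts on fibres to the pseudofunctor $\Hop^1\colon\FinSet\to\CatAd$ and that it is symmetric monoidal: the coCartesian lifts described in the subsection on coCartesian lifts identify $\Hop^1(\fset n)\simeq \CCC^{\boxtimes n}$ and $\Hop^1(\emptyset)\simeq\Vect$ (the latter being the content of the previous proposition), and these identifications are exactly the structure $2$-isomorphisms making $\Hop^1$ monoidal.

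Second, I would observe that in the cocartesian monoidal category $(\FinSet,\sqcup)$ every object, and in particular $\fset 1$, is canonically and uniquely a commutative monoid: the multiplication is the fold map $\nabla\colon\fset 2\to\fset 1$, the unit is the unique arrow $\emptyset\to\fset 1$, and commutativity is witnessed by the transposition $\tau\colon\fset 2\to\fset 2$, which satisfies $\nabla\circ\tau=\nabla$ on the nose. All the monoid axioms take place in a strict cocartesian structure and are therefore literal identities between maps of finite sets. Applying the symmetric monoidal pseudofunctor $\Hop^1$ then produces a commutative monoid — i.e.\ a symmetric pseudomonoid — in $(\CatAd,\boxtimes)$ whose underlying object is $\Hop^1(\fset 1)=\CCC$. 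Concretely the tensor product is $m=\Hop(\nabla)\colon\CCC\boxtimes\CCC\to\CCC$, the unit object is the image of $\emptyset\to\fset 1$ under $\Hop$, and the braiding is $\Hop(\tau)$. A symmetric pseudomonoid in $\CatAd$ supported on a $2$-vector space is exactly a symmetric monoidal structure on that $2$-vector space, which is the claim.

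The coherence data must then be produced and checked. The associator comes from pseudofunctoriality: the two bracketings of the threefold fold $\fset 3\to\fset 1$ are the same map of sets, so their images under $\Hop$ are related by the compositor $2$-isomorphisms of the pseudofunctor; likewise the unitors come from $\Hop$ applied to the unit identities and the braiding from $\Hop(\tau)$. I would verify the pentagon and the two hexagon axioms by exhibiting each as $\Hop$ applied to a diagram of finite sets that commutes strictly in $(\FinSet,\sqcup)$; since $\Hop$ sends $3$-cells (commuting cubes) to $3$-cells, each coherence equation descends from an identity of maps of sets together with the coherence of the pseudofunctor $\Hop$ itself.

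The main obstacle is purely bicategorical bookkeeping rather than a conceptual difficulty: one must set up the identifications $\Hop(\fset n)\simeq\CCC^{\boxtimes n}$ via the chosen coCartesian lifts carefully enough that the symmetric-pseudomonoid axioms become images under $\Hop$ of genuinely commuting diagrams in $\FinSet$, and then invoke preservation of $3$-cells to conclude. Notably, this argument uses only conditions (1) and (2) of \autoref{maindef}; the Beck--Chevalley condition (3) plays no role here, which reflects the remark in the introduction that the symmetric monoidal structure is a consequence solely of the choice of $\FinSet$ as indexing category.
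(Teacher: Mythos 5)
Your proof is correct, and its core move is the same as the paper's: transport the canonical, strictly commutative monoid structure that $\fset{1}$ carries in the cocartesian category $(\FinSet,\sqcup)$ through $\Hop$ to obtain a symmetric pseudomonoid structure on $\CCC$. The difference is entirely in the packaging. The paper never unwinds the fibrational formalism: its whole proof is to observe that the assignment $\pset{n}\mapsto(\fset{1},\ldots,\fset{1})$, with maps sent to collections of identity maps, is a section $s$ of $\FinDisj\rightarrow\FinSet_*$ --- this section is exactly your ``canonical commutative monoid structure on $\fset{1}$'' in fibrational clothing --- and that composing with $\Hop$ yields a section $\bar{s}$ of $\CatAdTen\rightarrow\FinSet_*$, which is taken, in the Lurie-style encoding that the fibration $\CatAdTen$ was built to provide, as being the data of a symmetric monoidal structure on $\bar{s}(\pset{1})=\CCC$. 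That makes the proof two lines, with all coherence absorbed into the definition. You instead translate conditions (1)--(2) of the definition into the statement that $\Hop$ restricts to a symmetric monoidal pseudofunctor $(\FinSet,\sqcup)\to(\CatAd,\boxtimes)$, identify $\Hop(\fset{n})\simeq\CCC^{\boxtimes n}$ via the coCartesian lifts, and then discharge the classical coherence axioms (pentagon, hexagons) by exhibiting them as images under $\Hop$ of strictly commuting diagrams of finite sets, using preservation of $3$-cells. That is more bookkeeping, but it delivers the symmetric monoidal structure in its classical explicit form (associator, unitors, braiding), which the paper's formulation only yields after a similar unwinding; and both routes correctly use only the coCartesianness of $\Hop$ and not the Beck--Chevalley condition, as you note at the end.
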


\begin{proof}
Consider the section $s$ of $\FinDisj\rightarrow\FinSet_*$ given by $\pset{n}\mapsto(\fset{1},\ldots,\fset{1})$ which sends a map $\alpha:\pset{m}\rightarrow\pset{n}$ to the appropriate collection of identity maps. Composing this section with the map $\FinDisj\rightarrow\CatAdTen$ we get a section $\bar{s}$ of $\CatAdTen\rightarrow\FinSet_*$, which gives a symmetric monoidal structure on $\bar{s}(\fset{1})=H(\fset{1})=\CCC$. 
\end{proof}

Explicitly, this monoidal structure is given by applying $\Hop$ to the map $(\fset{1},\fset{1})\rightarrow (\fset{1})$ given by the two copies of $\Id: \fset{1} \rightarrow \fset{1}$ which lies over the active map $\pset{2}\rightarrow\pset{1}$.

Let us denote this map by $(F,G)\mapsto F\otimes G$, where $F,G$ are objects in $\CCC$
\begin{Remark}
The map $(F,G)\mapsto F\otimes G$ factors essentially uniquely as $\Hop$ applied to the composition
\[
\nnstik{1}{
\fset{1} \ar{dr} \\
\& \fset{2} \ar{r} \& \fset{1} \\
\fset{1} \ar{ur}
}
\]
The factorization is given on $F,G \in \CCC$ as $(F,G)\mapsto F\boxtimes G \mapsto m(F\boxtimes G)$ i.e. a coCartesian arrow followed by an arrow over $\Id_{\pset{1}}$.
\end{Remark}

\begin{Corollary}
\label{SSH-kgroup}
The functor $\Hop$ canonically defines the structure of a PSH structure on the Grothendieck group $K(\CCC)$.
\end{Corollary}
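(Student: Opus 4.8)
The plan is to produce, from the SSH functor $\Hop\colon \FinDisj \to \CatAdTen$, a functor of fibrations $\FinDisj \to \ZAdTen$ satisfying the hypotheses of \autoref{th:PSHfunctor}; by that proposition its restriction to the fiber over $\pset{1}$ is precisely a PSH algebra, namely $K(\CCC)$. The whole construction factors as $K \circ \Hop$, where $K\colon \CatAdTen \to \ZAdTen$ is the ``decategorification'' functor of fibrations over $\FinSet_*$ obtained by passing to Grothendieck groups in each fiber.

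First I would construct $K$ on objects and check it lands in $\ZAdTen$. A 2-vector space is semisimple with finitely many simple objects in each degree, so its Grothendieck group is a free graded $\ZZ$-module with a canonical basis given by the classes of simples; this basis is finite in each degree and the grading is inherited, so $K$ sends objects of $\CatAd$ to objects of $\ZAd$. The inner product required by \autoref{def:PSH} is the one making the simples orthonormal, and it agrees with the pairing $\langle [X],[Y]\rangle = \dim \Hom(X,Y)$ because $\dim\Hom(S_i,S_j)=\delta_{ij}$ in a semisimple category. A bounded exact functor induces a $\ZZ$-linear map of bounded degree; since functors take objects to non-negative integral combinations of objects, these maps are positive, and the same holds for adjoint functors, so $K$ lands in the positive-with-positive-adjoint degree-zero morphisms constituting $\ZAd$. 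Finally, for 2-vector spaces the simples of a Deligne tensor are the external tensor products of simples, so $K(\CCC \boxtimes \DDD) \cong K(\CCC) \otimes K(\DDD)$; this shows $K$ carries the coCartesian lifts computed in the previous subsection to the corresponding tensor-product lifts in $\ZAdTen$ (and sends the unit lift $(\,)\to(\Vect)$, $\kk$, to $(\,)\to(\ZZ)$, $1$). Hence $K$ is a coCartesian functor of fibrations and $K \circ \Hop$ satisfies conditions (1) and (2) of \autoref{maindef} translated to $\ZAdTen$.

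The remaining and most delicate point is that $K$ carries Beck-Chevalley squares to $1$-categorical Beck-Chevalley squares. Here the key is that passing to Grothendieck groups commutes with taking mates. Concretely, $K$ sends an adjoint pair of 1-morphisms in $\CatAd$, together with its unit and counit, to an adjoint pair of maps in $\ZAd$ with respect to the inner product above; and the mate of a square is assembled from its filler 2-cell and these units and counits. Since $K$ is (pseudo)functorial and preserves this data, the mate of $K$ applied to a square equals $K$ applied to the mate of the square. Now $\ZAd$ is a $1$-category, so every 2-cell in it is an identity: an invertible 2-cell in $\CatAd$ is sent by $K$ to an equality of the underlying $\ZZ$-linear maps. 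Therefore, if a square in $\CatAd$ is Beck-Chevalley, i.e. its mate is an invertible 2-cell, then the mate of the image square is an equality, which is exactly the 1-categorical Beck-Chevalley condition. Applying this to the images under $\Hop$ of Cartesian squares in the fiber over $\pset{1}$ gives condition (3), and \autoref{th:PSHfunctor} then endows $K(\CCC)$ with its PSH structure.

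I expect the main obstacle to be making precise the claim that $K$ commutes with the mate construction, since this requires tracking how $K$ acts on the units and counits of the adjunctions in $\CatAd$ and checking that the relevant triangle identities descend correctly; once decategorification is set up as an honest (pseudo)functor of bicategories preserving adjunction data, the Beck-Chevalley step and everything else follow formally. The canonicity asserted in the statement is automatic, because $K$, the orthonormal basis of simples, and the induced pairing are all determined by $\Hop$ with no further choices.
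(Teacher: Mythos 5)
Your proposal follows essentially the same route as the paper's proof: decategorify via a Grothendieck-group functor $K$ landing in $\ZAdTen$, note that $K$ sends Deligne tensors to tensor products and adjoint functors to adjoint maps, and use the fact that $K$ commutes with the mate construction so that a Beck-Chevalley square in $\CatAd$ (invertible mate 2-cell) descends to a commuting mate, i.e.\ the 1-categorical Beck-Chevalley condition, in $\ZAd$. The only point the paper addresses that you omit is the connectedness of $K(\CCC)$ required by the definition of a PSH algebra, which the paper itself does not prove here but defers to its discussion of connected SSH categories in \autoref{rem:connected}.
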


\begin{proof}
Let $A:=K(\CCC)$.

Denote by $(\CatAdTen)^0$ the same bicategory where all non-invertible 2-morphisms have been discarded.

Then we have a functor $(\CatAdTen)^0\xrightarrow{K(-)}\ZAdTen$ which obviously sends Deligne tensor to tensor of $\ZZ$-modules.


The monoidal structure on $\CCC$ yields a structure of algebra on $A$. The Hopf axiom comes from the continuity of the original functor, as follows:

Consider the Cartesian square in $\FinDisj$, and its image under $\Hop$
\[
\stik{1}{
\fset{4} \ar{r}{a} \ar{d}{b} \& \fset{2} \ar{d}{c} \\
\fset{2} \ar{r}{d} \& \fset{1}
}\mapsto
\stik{1}{
\Hop(\fset{4}) \ar{r}{\Hop(a)} \ar{d}[left]{\Hop(b)} \& \Hop(\fset{2}) \ar{d}{\Hop(c)} \ar[Rightarrow,shorten <=1pc,shorten >=1pc]{dl}[above,sloped]{\alpha} \\
\Hop(\fset{2}) \ar{r}{\Hop(d)} \& \Hop(\fset{1})
}
\]
Since the square is Cartesian, its image satisfies the Beck-Chevalley condition. Taking $K$ groups, we get the commutative square of algebras 
\[
\widestik{1}{A^{\otimes 4} \ar{r}{m_{12}\otimes m_{34}} \ar{d}[left]{m_{13}\otimes m_{24}} \& A^{\otimes 2} \ar{d}{m}\\
A^{\otimes 2} \ar{r}{m} \& A
}
\]
The Hopf axiom amounts to showing that this square with the top and bottom arrows replaced by adjoints should commute. But replacing top and bottom with adjoints in the square of modules is the same as taking the left mate of the square $\alpha$ and then taking $K$ groups. The square $\alpha$ satisfies the Beck-Chevalley condition, hence its left mate has an isomorphism in the middle, so we conclude that the square of modules commutes.

Regarding connectedness of $K(\CCC)$ see the discussion in \autoref{rem:connected}.
\end{proof}

\begin{Proposition}
\label{prop:shonpower}
For any finite set $T_0$ we have a natural SSH structure on $\Hop(T_0)$.
\end{Proposition}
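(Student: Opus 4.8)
The plan is to realize the SSH structure on $\Hop(T_0)$ by precomposing $\Hop$ with a suitable endofunctor of $\FinDisj$. First observe that, since $\Hop$ is coCartesian, $\Hop(T_0) = \CS{T_0}$ is the Deligne tensor power $\CCC^{\otimes T_0}$ (\autoref{fiberover1}), and that the $\fset{1}$-fiber of the functor we seek must equal $\Hop(T_0)$. This suggests setting $\Hop_{T_0} := \Hop \circ \Phi_{T_0}$, where $\Phi_{T_0}\colon \FinDisj \to \FinDisj$ is a ``base change by $T_0$'' functor acting on the fiber $\FinSet$ over $\pset{1}$ as $S \mapsto S \times T_0$; then $\Hop_{T_0}(\fset{1}) = \Hop(\fset{1} \times T_0) = \Hop(T_0)$, as desired. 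This is the categorical analog of the elementary fact that $A^{\otimes n}$ is a PSH algebra whenever $A$ is.

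Concretely, I would define $\Phi_{T_0}$ on an object $(S_i)_{i \in I}$ over $I \in \FinSet_*$ by multiplying each non-basepoint component by $T_0$ while leaving the basepoint component a one-point set, i.e. $\Phi_{T_0}((S_i)_{i\in I}) = (S_i')_{i \in I}$ with $S_i' = S_i \times T_0$ for $i \in \nobase{I}$ and $\base{S'} = \base{S} = \pt$; on a morphism covering $\varphi\colon I \to J$, given by maps $S_i \to T_{\varphi(i)}$, apply $- \times \Id_{T_0}$ componentwise. Since this procedure alters neither the pointed set $I$ nor the map $\varphi$, the functor $\Phi_{T_0}$ lies over $\FinSet_*$, and hence so does the composite $\Hop_{T_0} = \Hop \circ \Phi_{T_0}$. (Via \autoref{rem:findisjalt} one may equivalently describe $\Phi_{T_0}$ as multiplying the non-basepoint part of an active arrow by $T_0$.)

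It then remains to verify the three conditions of \autoref{maindef} for $\Hop_{T_0}$, each of which reduces to the corresponding condition for $\Hop$ together with an elementary property of $\Phi_{T_0}$. For condition (1), $\Phi_{T_0}$ lands in $\FinDisj$ and $\Hop$ sends every arrow to a degree-zero arrow, so $\Hop_{T_0}$ does too. For condition (2), I would check that $\Phi_{T_0}$ preserves coCartesian arrows: by \autoref{lem:coCartInitial} a coCartesian lift over an active map $I \to \pset{1}$ exhibits its target as the coproduct $\bigsqcup_{i \in \nobase{I}} S_i$, and applying $\Phi_{T_0}$ yields $\bigsqcup_i (S_i \times T_0) \cong \left(\bigsqcup_i S_i\right) \times T_0$ by distributivity of product over coproduct, again a coCartesian presentation; the degenerate lift of $\pset{0} \to \pset{1}$ is preserved because $\emptyset \times T_0 = \emptyset$. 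As $\Hop$ is coCartesian, so is $\Hop_{T_0}$. For condition (3), on the fiber over $\pset{1}$ the functor $\Phi_{T_0}$ is $- \times T_0\colon \FinSet \to \FinSet$, and the crucial point is that this preserves pullbacks, via the canonical isomorphism $(B \times_D C) \times T_0 \cong (B \times T_0) \times_{D \times T_0} (C \times T_0)$; thus Cartesian squares map to Cartesian squares, which $\Hop$ sends to Beck-Chevalley squares.

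The construction is therefore essentially formal, and I expect the only delicate point to be the bookkeeping forced by the basepoint: the functor $- \times T_0$ does not preserve the terminal object, so one must treat the basepoint component separately (as above) to stay inside $\FinDisj$, and then confirm that this separate treatment interferes with neither the coCartesian arrows nor the fiberwise pullbacks. Once $\Phi_{T_0}$ is seen to be a well-defined morphism of fibrations over $\FinSet_*$ preserving coCartesian arrows and fiberwise Cartesian squares, the verification above shows that $\Hop_{T_0}$ is an SSH structure whose underlying category is $\Hop(T_0)$.
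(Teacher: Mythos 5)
Your proposal is correct and is essentially the paper's own proof: the paper defines the functor $S\mapsto \Hop(S\times T_0)$ and declares it "obviously" an SSH functor sending $\fset{1}$ to $\Hop(T_0)$, which is exactly your $\Hop\circ\Phi_{T_0}$. Your write-up simply supplies the verification (basepoint bookkeeping, distributivity of $-\times T_0$ over coproducts, preservation of pullbacks) that the paper leaves implicit.
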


\begin{proof}
Define a functor by $S\mapsto \Hop(S\times T_0)$, then it is obviously an SSH functor, and it sends $\fset{1}$ to $\Hop(T_0)$.
\end{proof}

\subsection{The categorical analogs of Hopf algebra structures}

\subsubsection{Connectedness}
\label{rem:connected}
\label{connectedness}

Recall that a PSH algebra $A$ is defined to be a \emph{connected} Hopf algebra, namely it has unit and counit morphisms $\ZZ\rightarrow A_0$ and $A\rightarrow\ZZ$ such that they give an isomorphism of $A_0$ with $\ZZ$.\\
This is categorified as follows:
The unit and counit in $K(\CCC)$ are the image of the map $\emptyset\rightarrow 1$ in $\FinDisj$ and its left adjoint. Denote the image of this map under $\Hop$ by $m_\emptyset$ and its adjoint by $\Delta_\emptyset$.

Consider the diagrams
\begin{align*}
\stik{1}{
\emptyset \arrow[equal]{d} \arrow[equal]{r} \& \emptyset \ar{d}\\
\emptyset \ar{r} \& 1
}
& {\stik{1}{}} &
\stik{1}{
\emptyset \arrow{d} \arrow{r} \& 1 \ar[equal]{d}\\
1 \ar[equal]{r} \& 1
}
\end{align*}

They go to squares in $\CatAd$

\begin{align*}
\stik{1}{
\Vect \arrow[equal]{d} \arrow[equal]{r} \& \Vect \ar{d}{m_\emptyset}\ar[Rightarrow,shorten <=0.7pc,shorten >=0.7pc]{dl}[above, sloped]{\Id}\\
\Vect \ar{r}[below]{m_\emptyset} \& \CCC
}
& {\stik{1}{}} &
\stik{1}{
\Vect \arrow{d}[left]{m_\emptyset} \arrow{r}{m_\emptyset} \& \CCC \ar[equal]{d} \ar[Rightarrow,shorten <=0.7pc,shorten >=0.7pc]{dl}[above, sloped]{\Id}\\
\CCC \ar[equal]{r} \& \CCC
}
\end{align*}
their left mates are
\begin{align*}
\stik{1}{
\Vect \arrow[equal]{d} \arrow[equal]{r} \& \Vect \ar{d}{m_\emptyset}\\
\Vect \ar[leftarrow]{r}[below]{\Delta_\emptyset} \& \CCC \ar[Rightarrow,shorten <=0.7pc,shorten >=0.7pc]{ul}[above, sloped]{\alpha}
}
& {\stik{1}{}} &
\stik{1}{
\Vect \arrow{d}[left]{m_\emptyset} \arrow[leftarrow]{r}{\Delta_\emptyset} \& \CCC \ar[equal]{d}\\
\CCC \ar[equal]{r} \& \CCC \ar[Rightarrow,shorten <=0.7pc,shorten >=0.7pc]{ul}[above, sloped]{\beta}
}
\end{align*}
The left square came from an \comma square, so contains an isomorphism, i.e. $\alpha$ is invertible, but $\beta$ need not be, and in a non trivial situation will not be.

Define $\CCC_00$ to be the full subcategory of objects $X\in\CCC$ for which $\beta:X\rightarrow m_\emptyset\Delta_\emptyset X$ is invertible. Note that all objects of $\CCC_00$ are necessarily of degree 0 because the map $m_\emptyset$ lands in degree 0.

\begin{Definition}
We say that an SSH category is \emph{connected} if $\CCC_00=\CCC_0$.
\end{Definition}

\begin{Proposition}
$m_\emptyset,\Delta_\emptyset$ give an adjoint equivalence between $\Vect$ and $\CCC_00$.
\end{Proposition}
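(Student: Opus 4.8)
The plan is to read the two $2$-cells $\alpha$ and $\beta$ as the counit and unit of the adjunction $\Delta_\emptyset \dashv m_\emptyset$, and then to invoke the standard fact that an adjunction whose unit and counit are both natural isomorphisms is an adjoint equivalence. So almost all of the work is in correctly identifying $\alpha$ and $\beta$; the rest is formal.

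First I would pin down $\alpha$ and $\beta$. Both are mates of the \emph{identity} $2$-cell of a trivially commuting square, taken across the adjunction $\Delta_\emptyset \dashv m_\emptyset$ on one pair of parallel edges; by the usual description of mates, the mate of an identity across such an adjunction is exactly a (co)unit. Thus $\beta$ is the unit $\eta\colon \Id_\CCC \Rightarrow m_\emptyset\Delta_\emptyset$ — so that $\beta_X\colon X\to m_\emptyset\Delta_\emptyset X$ is precisely the structure map defining $\CCC_{00}$ — while $\alpha$ is (the inverse of) the counit $\epsilon\colon \Delta_\emptyset m_\emptyset \Rightarrow \Id_\Vect$. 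Since the left square arose from a Cartesian, hence Beck-Chevalley, square, $\alpha$ is invertible; this says exactly that $\Delta_\emptyset m_\emptyset\cong\Id_\Vect$, i.e. that $m_\emptyset$ is fully faithful and the counit $\epsilon$ is a natural isomorphism.

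Next I would check that $m_\emptyset$ corestricts to $\CCC_{00}$. For $V\in\Vect$ the triangle identity $(m_\emptyset\epsilon)\circ(\eta m_\emptyset)=\Id_{m_\emptyset}$ exhibits $\beta_{m_\emptyset V}=(\eta m_\emptyset)_V$ as a section of the isomorphism $(m_\emptyset\epsilon)_V$, forcing $\beta_{m_\emptyset V}$ to be invertible; hence $m_\emptyset V\in\CCC_{00}$. Dually, any $X\in\CCC_{00}$ satisfies $X\cong m_\emptyset\Delta_\emptyset X$, so $\CCC_{00}$ is exactly the essential image of $m_\emptyset$ and $m_\emptyset\colon\Vect\to\CCC_{00}$ is essentially surjective. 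Since $\Delta_\emptyset$ restricts to a functor $\CCC_{00}\to\Vect$ and the composites $m_\emptyset\Delta_\emptyset$ and $\Delta_\emptyset m_\emptyset$ stay within $\CCC_{00}$ and $\Vect$ respectively, the adjunction $\Delta_\emptyset\dashv m_\emptyset$ restricts to an adjunction between $\Vect$ and $\CCC_{00}$.

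Finally I would conclude: on $\CCC_{00}$ the unit $\beta$ is invertible by the very definition of $\CCC_{00}$, and the counit $\epsilon$ is invertible by the first step, so the restricted adjunction is an adjoint equivalence, with the coherent isomorphisms $\Delta_\emptyset m_\emptyset\cong\Id_\Vect$ and $m_\emptyset\Delta_\emptyset\cong\Id_{\CCC_{00}}$ supplied by $\alpha$ and $\beta$. I expect the only genuinely delicate point to be the identification of the mates $\alpha,\beta$ with the counit and unit and getting their variances right, so that ``$\alpha$ invertible'' may legitimately be read as ``the counit is invertible''; once that identification is fixed, every remaining step is a formal consequence of the triangle identities.
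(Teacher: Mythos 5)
Your proposal is correct and follows essentially the same route as the paper's proof: you identify $\alpha$ and $\beta$ as the counit and unit of the adjunction between $\Delta_\emptyset$ and $m_\emptyset$, use the Beck-Chevalley (Cartesian) origin of the left square to get invertibility of the counit, and then apply the triangle identity to show $\beta$ is invertible on objects $m_\emptyset V$, so that $m_\emptyset$ lands in $\CCC_{00}$ and the restricted adjunction is an equivalence. The extra detail you supply (essential surjectivity onto $\CCC_{00}$, restriction of the adjunction) is exactly the formal completion the paper leaves implicit.
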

\begin{proof}
All we need to show is that $m_\emptyset$ always lands in $\CCC_0$. This holds because $\alpha,\beta$ are just the counit and unit of the adjunction $(m_\emptyset\dashv \Delta_\emptyset)$, so the composition\[
m_\emptyset V\xrightarrow{\beta}m_\emptyset\Delta_\emptyset m_\emptyset V\xrightarrow{\alpha}m_\emptyset V
\]
is the identity of $m_\emptyset V$. In particular $\beta$ is invertible for $m_\emptyset V$.
\end{proof}

\begin{Corollary}
The $K$-group of a connected SSH category is a (connected) PSH algebra.
\end{Corollary}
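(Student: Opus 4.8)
The plan is to combine the self-adjoint positive Hopf structure already produced in \autoref{SSH-kgroup} with the connectedness hypothesis, so as to verify the single remaining clause of \autoref{def:PSH}. Writing $A := K(\CCC)$, recall that \autoref{SSH-kgroup} equips $A$ with the structure of a graded, positive, self-adjoint Hopf algebra over $\ZZ$, and explicitly defers the connectedness property to the discussion in \autoref{rem:connected}. Thus the only thing left to check is connectedness in the sense recalled there: that the unit $\ZZ\to A_0$ and counit $A_0\to\ZZ$ are mutually inverse isomorphisms, identifying $A_0$ with $\ZZ$.

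First I would pin down the unit and counit on the categorical level. As explained in \autoref{rem:connected}, they are induced by $m_\emptyset$ and its adjoint $\Delta_\emptyset$, the image under $\Hop$ of $\emptyset\to\fset{1}$ and its left adjoint. Hence, under the canonical identification $K(\Vect)\cong\ZZ$ (the class of the one-dimensional space being the generator $1$), the unit is $K(m_\emptyset):\ZZ\to A_0$ and the counit is $K(\Delta_\emptyset):A_0\to\ZZ$. Next I would invoke the standing hypothesis that the SSH category is connected, which by definition means $\CCC_00=\CCC_0$. By the Proposition immediately preceding this Corollary, $m_\emptyset$ and $\Delta_\emptyset$ form an adjoint equivalence between $\Vect$ and $\CCC_00$; under the connectedness hypothesis this is an adjoint equivalence $\Vect\simeq\CCC_0$.

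Applying the additive invariant $K(-)$ then finishes the argument: an equivalence of semisimple abelian categories induces an isomorphism of Grothendieck groups, so $K(m_\emptyset)$ and $K(\Delta_\emptyset)$ descend to mutually inverse isomorphisms $\ZZ\cong A_0$. This is precisely the connectedness clause, so together with the positive self-adjoint Hopf structure of \autoref{SSH-kgroup} we conclude that $A$ is a connected PSH algebra in the sense of \autoref{def:PSH}.

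The only point requiring any care — the closest thing to an obstacle — is bookkeeping: that the grading on $A$ places $K(\CCC_0)$ exactly in degree $0$, so that "$A_0$" genuinely is $K(\CCC_0)$, and that the abstract Hopf unit and counit coincide on the nose (not merely up to the canonical identifications) with $K(m_\emptyset)$ and $K(\Delta_\emptyset)$. Both are immediate from the constructions, since $m_\emptyset$ lands in degree $0$ by the remark following the definition of $\CCC_00$, and since the unit and counit of the Hopf structure in \autoref{SSH-kgroup} were by construction the $K$-theory images of $m_\emptyset$ and $\Delta_\emptyset$.
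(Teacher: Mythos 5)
Your proof is correct and follows exactly the route the paper intends: you combine the PSH structure from \autoref{SSH-kgroup} with the connectedness hypothesis $\CCC_00=\CCC_0$ and the preceding Proposition (the adjoint equivalence $\Vect\simeq\CCC_00$ via $m_\emptyset,\Delta_\emptyset$), then apply $K(-)$ to identify $A_0$ with $\ZZ$. This is precisely the argument the paper defers to the discussion in \autoref{rem:connected}, so nothing further is needed.
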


\subsubsection{The categorical Hopf axiom}
In the notation of \autoref{fiberover1}, for any \comma square of maps of sets we have the corresponding diagram in $\CatAd$
\[
\tik
S\ar{r}{a} \ar{d}{c} &T \ar{d}{b} \\ 
R \ar{r}{d} & U 
\tak
\rightsquigarrow
\tik
\CS{S}\arrow[]{d}{m_c} \arrow[]{r}{m_a} & \CS{T} \arrow[]{d}{m_b} \arrow[shorten >=0.4cm,shorten <=0.4cm,Rightarrow]{dl}[above,sloped]{\sim} \\ 
\CS{R} \arrow[]{r}{m_d} & \CS{U}
\tak
\]
(Here $S=T\times_U R$). Denoting the left adjoint to $m$ by $\Delta^l$ we can consider the left mate of this square
\[
\tik
\CS{S} \arrow[shorten >=0.4cm,shorten <=0.4cm,Leftarrow]{dr}[above,sloped]{} \arrow[leftarrow]{r}{\Delta^l_a} & \CS{T} \\ 
\CS{R} \arrow[leftarrow]{u}{m_c} \arrow[leftarrow]{r}{\Delta^l_d} & \CS{U} \arrow[leftarrow]{u}{m_b}
\tak
\]

And the Beck-Chevalley condition tells us that the $2$-morphism in this square is invertible. This can be viewed as a natural categorification of the Hopf axiom in the algebra "for any number of variables". 

This system of isomorphisms includes a compatibility with the monoidal structure as we saw in the proof of \autoref{frenkelcranehopfcat}.

Note that taking the right adjoint of $m$ gives an additional system of "Hopf isomorphisms". This phenomenon appears in the article \cite{savageyacobi} where the authors construct a dual pair of Hopf algebras from a monoidal category. We suggest that the underlying structure of their construction should be a non-semisimple selfadjoint Hopf category. See also \autoref{rem:leftright}.


\subsection{\comma squares}
\label{sec:2Cartesian}

A \comma square in a $2$-category can be thought of as a square \smallttik{
A \ar{r} \ar{d} \& B \ar{d} \ar[Rightarrow,shorten <=0.7em,shorten >=0.7em]{dl} \\
C \ar{r} \& D
} 
which is final in the category of squares which share its right and bottom sides. This is a slight weakening of the notion of comma square - the precise definition is given in \autoref{app:WeightedLimits}.

Morally, it means that for any square \smallttik{
E \ar{r} \ar{d} \& B \ar{d} \ar[Rightarrow,shorten <=0.7em,shorten >=0.7em]{dl}\\
C \ar{r} \& D
}
there exists an "essentially unique" oriented commutative cube 
\begin{equation}
\label{commacube}
\stik{0.7}{
{} \& C \ar{rr}[name=topback,below]{} \ar[equal]{dd}[name=backleft,sloped,xshift=-1.5pc]{} \& {} \& D \ar[equal]{dd}\\
E \ar[crossing over]{rr}[name=topfront,above]{}\ar{dd}[name=frontleft]{} \ar{ur}[name=topleft,below]{} \& {} \& B \ar[equal]{dd}[name=frontright,sloped,xshift=1pc]{} \ar{ur}[name=topright,below]{} \& {}\\
{} \& C \ar{rr} \& {} \& D\\
A \ar{rr}[name=bottomfront,below]{} \ar{ur}[name=bottomleft,sloped,yshift=-0.2pc]{} \& {} \& B \ar{ur}
\arrow[Rightarrow,to path={(topright) to[bend left] (topback)}]{}{}
\arrow[Rightarrow,to path={(frontright) to[bend right] (bottomfront)}]{}{}
\arrow[equal,crossing over,to path={(frontright) to[bend right] (topright)}]{}{}
\arrow[Rightarrow,to path={(bottomfront) to[bend right] (bottomleft)}]{}{}
\arrow[Rightarrow,to path={(bottomleft) to[bend left] (backleft)}]{}{}
\arrow[equal,to path={(topback) to[bend left] (backleft)}]{}{}
\latearrow{/tikz/commutative diagrams/crossing over,/tikz/commutative diagrams/equal}{2-3}{4-3}{}
\latearrow{/tikz/commutative diagrams/crossing over}{2-1}{2-3}{}
}
\end{equation}
where vertices are objects, edges are $1$-morphisms and faces are $2$-morphisms composed as indicated by the arrows in the diagram (more on the subject of commutative cubes in \autoref{app:commcube}).

In $\FinSet$, or any 1-category, an \comma square is the same as a usual Cartesian square.

\begin{Proposition}
\label{prop:bccart}
\label{proof:cartesianBC}
In $\CatAd$ a square is \comma iff it satisfies the Beck-Chevalley condition.
\end{Proposition}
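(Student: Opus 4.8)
The plan is to work entirely inside the explicit model of $\CatAd$ in which a $1$-morphism $F\colon \mathcal{X}\to\mathcal{Y}$ between $2$-vector spaces is encoded by a matrix of (graded) finite-dimensional vector spaces, the entry at a pair of simple objects $(y,x)$ being $\Hom_\mathcal{Y}(y,Fx)$, and in which composition is matrix multiplication with $\oplus$ and $\otimes$. In this model the left and right adjoints guaranteed by the Remark after the definition of $\CatAd$ are computed by the conjugate-transpose matrix (transpose, then dualize each entry), with unit and counit the usual (co)evaluations. Because both conditions in the statement are additive and may be tested one simple object of the lower-right corner $D$ at a time (the comma construction decomposes as a sum over the simples of $D$, exactly as a pullback of finite sets decomposes over the fibres, and the mate is invertible iff it is invertible in each such component), I would first reduce to the case $D=\Vect$. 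A square is then determined by the two ``row vectors'' $H=(H_b)_b$ and $K=(K_c)_c$ recording its right and bottom legs, together with the $2$-cell $\alpha$, which is a system of linear maps between the entries of the two composite legs.

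With this set-up the first task is to identify the Beck-Chevalley mate concretely. Replacing one opposite pair of legs of the square by their adjoints and whiskering $\alpha$ with the relevant unit and counit produces a $2$-cell $\beta$; a direct computation shows that, entry by entry, $\beta$ is precisely the image of $\alpha$ under the canonical partial-dualization isomorphism of $\Hom$-spaces (tensor-hom adjunction applied to the dualized entries of the adjoint legs). Consequently $\alpha$ satisfies the Beck-Chevalley condition if and only if every component of $\beta$ is an isomorphism, and this unwinds to an explicit nondegeneracy condition on the linear maps assembled by $\alpha$. (Since both adjoints exist, I would also note that the left and right mates yield the same condition here, so it is harmless to speak of ``the'' mate.)

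The second task is to compute the \comma square of the cospan $B\to D\leftarrow C$ as a genuine object of $\CatAd$, using the weighted-limit description from the appendix: the apex $P$ is characterized by being terminal among squares sharing the given right and bottom legs, and its simple objects, projections $p\colon P\to C$, $q\colon P\to B$ and universal $2$-cell $\theta$ are to be read off from the spaces $\Hom_D(Hb,Kc)$. For any square $\alpha$ on the same cospan the universal property supplies a comparison $1$-morphism $u\colon A\to P$ with $qu\cong F$, $pu\cong G$ and $\theta u=\alpha$, and the square is \comma exactly when $u$ is an equivalence. Since a $1$-morphism of $2$-vector spaces is an equivalence iff its matrix of $\Hom$-spaces is invertible, I would compute that matrix for $u$ and observe that its invertibility is governed by exactly the same nondegeneracy condition on $\alpha$ that controls the invertibility of $\beta$. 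Matching the two conditions proves both implications simultaneously.

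The hard part will be the honest construction of $P$ inside $\CatAd$ together with the verification that its terminal universal property translates into precisely that nondegeneracy condition. The subtlety is that the naive (lax) comma category of two functors of $2$-vector spaces need not be semisimple once the legs carry multiplicities, so $P$ is \emph{not} that category but the universal $2$-vector space receiving such a square, and one must check that passing to it is exactly what trades the lax comma for the \comma (weighted-limit) universal property. Carefully tracking the gradings, the reduction over the simples of $D$, and the identification of the comparison matrix of $u$ with the mate $\beta$ is where the genuine work lies; once these bookkeeping identifications are in place, the equivalence ``invertible mate $\Leftrightarrow$ equivalence $u$'' is immediate, and the proposition follows.
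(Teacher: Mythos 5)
Your plan for the Beck--Chevalley half (matrix model of $\CatAd$, adjoints as conjugate transpose, reduction over the simples of $D$, mate computed entrywise as partial dualization) is reasonable, but the other half rests on a mischaracterization of what ``\comma'' means in this paper, and that gap is fatal. You take a square to be \comma exactly when the canonical comparison $u\colon A\to P$ to a universal comma object $P$ is an equivalence. The paper's definition (see \autoref{app:WeightedLimits}) is strictly weaker: a square $\alpha$ is \comma iff the projection from the category $X$ of squares on the given cospan admits a \emph{weak} right adjoint at $\alpha$, i.e.\ iff for every competing square $\beta$ the Hom-category $\Hom_X(\beta,\alpha)$ of cubes has a final object. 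This is not a pseudo-limit universal property and does not determine $\alpha$ up to equivalence; indeed the Remark right after the paper's proof observes that $\Hom_X(\beta,\alpha)$ also has an initial object not isomorphic to the final one, so these Hom-categories are not contractible. Concretely, take the cospan $\Vect\xrightarrow{\Id}\Vect\xleftarrow{\Id}\Vect$: both the identity square on $\Vect$ and the square with apex $\Vect^{3}$ (simples indexed by $\kk\to 0$, $0\to\kk$, $\kk\xrightarrow{\Id}\kk$, with the evident legs and $2$-cell) satisfy the Beck--Chevalley condition, hence by the proposition both are \comma, yet their apexes are inequivalent. Under your criterion at most one of them could be \comma, so the statement you would end up proving is actually false.

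A second, related problem is that the object $P$ you need does not exist in $\CatAd$ with the universal property you assign to it: squares with apex $\Vect$ over the identity cospan are precisely objects of the arrow category of $\Vect$, so any $P$ with $\Fun(\Vect,P)\simeq\{\text{squares with apex }\Vect\}$ would have to be equivalent to that arrow category, which is not semisimple; no semisimple replacement can evade this, since the test at apex $\Vect$ already forces it. This is exactly why the paper never invokes a universal comma object. In one direction it uses weak finality against an auxiliary square with apex $D$ (built from $f_L$, $g$ and a right mate) to produce a cube into $\alpha$, and then takes the mate of that cube to invert $\alpha_L$; in the other direction it constructs, for each competing square $\beta$, an explicit final cube $C_R$ out of the unit and counit of $(i\dashv i_R)$ and the inverse mate $\alpha_R^{-1}$, and proves finality and uniqueness of the comparison $2$-cell by interchange-law manipulations. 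Your entrywise description of mates could in principle be used to carry out that construction concretely in the matrix model, but as written your proposal replaces the statement to be proved by a different (and false) one, so the core of the argument is missing.
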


\begin{proof}
For the definitions and details regarding the Beck-Chevalley condition see \autoref{sec:BC}.

Let \[
\stik{1}{
A \ar{r}{h} \ar{d}{i} \& B \ar{d}{g} \ar[Rightarrow,shorten <=0.7em,shorten >=0.7em]{dl}[above,sloped]{\alpha}\\
C \ar{r}{f} \& D
}
\]
be a square, and suppose it is \comma. Denote the $2$-morphism in its left mate by $\alpha_L$. We want to show that $\alpha_L$ is invertible. For any functor $x\in\CatAd$, denote by $x_L,x_R$ the left and right adjoints of $x$, and consider the square 
\[
\stik{1}{
D \ar[leftarrow]{r}{g} \ar[equal,shorten <=0.7em,shorten >=0.7em]{dr}[above,sloped]{} \ar{d}{f_L} \& B \ar{d}{g} \\
C \ar[leftarrow]{r}{f_L} \& D
}
\]
Take its right mate to get the square
\[
\stik{1}{
D \ar{r}{g_R}  \ar{d}{f_L} \& B \ar[Rightarrow,shorten <=0.7em,shorten >=0.7em]{dl}[above,sloped]{\beta} \ar{d}{g} \\
C \ar{r}{f} \& D
}
\]
which has the same right and bottom sides as our original square. As a consequence we have a commutative cube
\[
\stik{0.7}{
{} \& C \ar{rr}[name=topback,below]{} \ar[equal]{dd}[name=backleft,sloped,xshift=-1.5pc]{} \& {} \& D \ar[equal]{dd}\\
D \ar[crossing over]{rr}[name=topfront,above]{}\ar{dd}[name=frontleft]{} \ar{ur}[name=topleft,below]{} \& {} \& B \ar[equal]{dd}[name=frontright,sloped,xshift=1pc]{} \ar{ur}[name=topright,below]{} \& {}\\
{} \& C \ar{rr} \& {} \& D\\
A \ar{rr}[name=bottomfront,below]{} \ar{ur}[name=bottomleft,sloped,yshift=-0.2pc]{} \& {} \& B \ar{ur}
\arrow[Rightarrow,to path={(topright) to[bend left] (topback)}]{}{}
\arrow[Rightarrow,to path={(frontright) to[bend right] (bottomfront)}]{}{}
\arrow[Rightarrow,crossing over,to path={(frontright) to[bend right] (topright)}]{}{}
\arrow[Rightarrow,to path={(bottomfront) to[bend right] (bottomleft)}]{}{}
\arrow[Rightarrow,to path={(bottomleft) to[bend left] (backleft)}]{}{}
\arrow[Rightarrow,to path={(topback) to[bend left] (backleft)}]{}{}
\latearrow{/tikz/commutative diagrams/crossing over,/tikz/commutative diagrams/equal}{2-3}{4-3}{}
\latearrow{/tikz/commutative diagrams/crossing over}{2-1}{2-3}{}
}
\]

Taking the left mate of this cube as described in \autoref{lem:cubemate}, we get a commutative cube that on one side has the composition of three identity morphisms and on the other side has a composition of the form $\gamma_1\circ \gamma_2\circ\alpha_L$. In particular, $\alpha_L$ is invertible, as required.

In the other direction, suppose that our originial square $\alpha$ satisfies the Beck-Chevalley condition. We want to show it is \comma using the definitions in \autoref{app:PBcont}. Let $X$ be the category of squares with bottom right corner \stik{0.7}{\& B \ar{d}{g}\\ C \ar{r}{f} \& D} as described in \autoref{app:WeightedLimits}. We need to show that the map $X\rightarrow\point$ has a lax right adjoint given by our square.

Consider a square 
\[
\stik{1}{
M \ar{r}{b} \ar{d}{a} \& B \ar{d}{g} \ar[Rightarrow,shorten <=0.7em,shorten >=0.7em]{dl}[above,sloped]{\beta}\\
C \ar{r}{f} \& D
}
\]
(for simplicity we have taken a square with precisely the same corner and not the more general case described in \autoref{app:WeightedLimits}, but this does not affect the proof).

Denote $S=\Hom_X(\beta,\alpha)$. We will show that $S$ has a final object (as discussed in \autoref{app:PBcont}).

Let $C_R=i_R\circ a$ and consider the following object in $S$ which we also denote $C_R$:
\[
\stik{1.2}{
{} \& C \ar{rr}[name=topback,below]{}[above]{f} \ar[equal]{dd}[name=backleft,sloped,xshift=-1.5pc]{} \& {} \& D \ar[equal]{dd}\\
M \ar[crossing over]{rr}[name=topfront,above]{}[above,near end,yshift=1pt]{b} \ar{dd}[name=frontleft]{}[left]{C_R} \ar{ur}[name=topleft,below]{}[above]{a} \& {} \& B \ar[equal]{dd}[name=frontright,sloped,xshift=1pc]{} \ar{ur}[name=topright,below]{}[below,near end]{g} \& {}\\
{} \& C \ar{rr}[above,near start]{f} \& {} \& D\\
A \ar{rr}[name=bottomfront,below]{}[below]{h} \ar{ur}[name=bottomleft,sloped,yshift=-0.2pc]{}[above,near start]{i} \& {} \& B \ar{ur}[below]{g}
\arrow[Rightarrow,to path={(topright) to[bend left] (topback)}]{}{}
\arrow[Rightarrow,to path={(frontright) to[bend right] (bottomfront)}]{}{}
\arrow[equal,crossing over,to path={(frontright) to[bend right] (topright)}]{}{}
\arrow[Rightarrow,to path={(bottomfront) to[bend right] (bottomleft)}]{}{}
\arrow[Rightarrow,to path={(bottomleft) to[bend left] (backleft)}]{}{}
\arrow[equal,to path={(topback) to[bend left] (backleft)}]{}{}
\latearrow{/tikz/commutative diagrams/crossing over,/tikz/commutative diagrams/equal}{2-3}{4-3}{}
\latearrow{/tikz/commutative diagrams/crossing over}{2-1}{2-3}{}
}
\]
where the left face is given by the counit $\epsilon$ of the adjunction $(i\dashv i_R)$ and the front face is given by the composition (reading from left to right)
\[
g_R:=\nstik{1}{
{} \ar{d}{b} \& \ar{d}{b} \ar[yshift=-1em,shorten <=1em,shorten >=0.2em]{dr}[xshift=0em]{\beta} \& \ar{d}{a} \& \ar{d}{a}\\
{} \ar[equal]{d} \ar[yshift=-1em,shorten <=1em,shorten >=0.2em]{dr}[xshift=-0.5em]{\eta} \& \ar{d}{g} \ar[yshift=-1.5em,shorten <=1.2em]{ur} \& \ar{d}{f} \ar[yshift=-1em,shorten <=1em,shorten >=0.2em]{dr}[xshift=0em]{\alpha_R^{-1}} \& \ar{d}{i_R}\\
{} \ar[equal]{d} \ar[yshift=-1.5em,shorten <=1.2em]{ur} \& \ar{d}{g_R} \& \ar{d}{g_R} \ar[yshift=-1.5em,shorten <=1.2em]{ur} \& \ar{d}{h}\\
{} \& {} \& {} \& {}
}
\]
It can be easily checked that this cube commutes.

\begin{Claim}
The cube $C_R$ is final in $S$.
\end{Claim}

\begin{Note}
In addition to the cube we constructed above, we should also give a morphism between the compositions in the corner as described in the end of \autoref{app:WeightedLimits} which in this case can be taken to be the identity. As a result this part of the 1-morphism does not affect the finality and so for the sake of readability we disregard it in the proof.
\end{Note}

Suppose that we have another element $Y\in S$, i.e. a cube 
\[
\stik{1.2}{
{} \& C \ar{rr}[name=topback,below]{}[above]{f} \ar[equal]{dd}[name=backleft,sloped,xshift=-1.5pc]{} \& {} \& D \ar[equal]{dd}\\
M \ar[crossing over]{rr}[name=topfront,above]{}[above,near end,yshift=1pt]{b} \ar{dd}[name=frontleft]{}[left]{Y} \ar{ur}[name=topleft,below]{}[above]{a} \& {} \& B \ar[equal]{dd}[name=frontright,sloped,xshift=1pc]{} \ar{ur}[name=topright,below]{}[below,near end]{g} \& {}\\
{} \& C \ar{rr}[above,near start]{f} \& {} \& D\\
A \ar{rr}[name=bottomfront,below]{}[below]{h} \ar{ur}[name=bottomleft,sloped,yshift=-0.2pc]{}[above,near start]{i} \& {} \& B \ar{ur}[below]{g}
\arrow[Rightarrow,to path={(topright) to[bend left] (topback)}]{}{}
\arrow[Rightarrow,to path={(frontright) to[bend right] (bottomfront)}]{}{}
\arrow[equal,crossing over,to path={(frontright) to[bend right] (topright)}]{}{}
\arrow[Rightarrow,to path={(bottomfront) to[bend right] (bottomleft)}]{}{}
\arrow[Rightarrow,to path={(bottomleft) to[bend left] (backleft)}]{}{}
\arrow[equal,to path={(topback) to[bend left] (backleft)}]{}{}
\latearrow{/tikz/commutative diagrams/crossing over,/tikz/commutative diagrams/equal}{2-3}{4-3}{}
\latearrow{/tikz/commutative diagrams/crossing over}{2-1}{2-3}{}
}
\]

A morphism $Y\rightarrow C_R$ can be given by a commutative 4-cube
\[
\begin{tikzpicture}[commutative diagrams/every diagram,scale=2]
	 \tikzstyle{over}=[-,line width=4pt,draw=white];
	 \tikzstyle{CD}=[commutative diagrams/.cd, every arrow, every label]
	 \tikzstyle{equal}=[-,line width=1pt,double,double distance=3 pt]
	 \node (IBFL) at (0,0) {$A$};
	 \node (ITFL) at (0,1) {$M$};
	 \node (IBFR) at (1,0) {$B$};
	 \node (ITFR) at (1,1) {$B$};
	 \node (IBKL) at (0.23, 0.4) {$C$};
	 \node (ITKL) at (0.23,1.4) {$C$};
	 \node (IBKR) at (1.23,0.4) {$D$};
	 \node (ITKR) at (1.23,1.4) {$D$};
	 \node (OBFL) at (-1,-1) {$A$};
	 \node (OTFL) at (-1,2) {$M$};
	 \node (OTKL) at (-0.66,2.7) {$C$};
	 \node (OBKL) at (-0.66,-0.3) {$C$};
	 \node (OBFR) at (2,-1) {$B$};
	 \node (OBKR) at (2.34,-0.3) {$D$};
	 \node (OTFR) at (2,2) {$B$};
	 \node (OTKR) at (2.34,2.7) {$D$};
	 \path[CD]
        (OTKL) edge (OTKR)
        (OTKL) edge[equal] (OBKL)
        (OBKL) edge (OBKR)
        (OTKR) edge[equal] (OBKR)
        (OTKL) edge[equal] (ITKL)
        (OBKL) edge[equal] (IBKL)
        (OTKR) edge[equal] (ITKR)
        (OBKR) edge[equal] (IBKR)
        (ITKL) edge (ITKR)
        (ITKL) edge[equal] (IBKL)
        (IBKL) edge (IBKR)
        (ITKR) edge[equal] (IBKR)
        (ITFL) edge (ITKL)
        (IBFL) edge (IBKL)
        (ITFR) edge (ITKR)
        (IBFR) edge (IBKR)
        (ITFL) edge[over] (ITFR) edge (ITFR)
        (ITFL) edge[over] (IBFL) edge node[scale=1.5,left] {$C_R$} (IBFL)
        (IBFL) edge[over] (IBFR) edge (IBFR)
        (ITFR) edge[over] (IBFR) edge[equal] (IBFR)
        (OTFL) edge (OTKL)
        (OBFL) edge (OBKL)
        (OTFR) edge (OTKR)
        (OBFR) edge (OBKR)
        (ITFL) edge[over] (OTFL) edge[equal] (OTFL)
        (IBFL) edge[over] (OBFL) edge[equal] (OBFL)
        (ITFR) edge[over] (OTFR) edge[equal] (OTFR)
        (IBFR) edge[over] (OBFR) edge[equal] (OBFR)
        (OTFL) edge[over] (OTFR) edge (OTFR)
        (OTFL) edge[over] (OBFL) edge node[scale=1.5,left] {$Y$} (OBFL)
        (OBFL) edge[over] (OBFR) edge (OBFR)
        (OTFR) edge[over] (OBFR) edge[equal] (OBFR)
        ;
\end{tikzpicture}
\]
where the dimensions are ordered as $[\rightarrow,\downarrow$,radial towards center,$\nearrow$]. This defines the orientation of 2-morphisms and the order of their composition - see \autoref{app:commcube} for details. 
The diagram of 2-morphisms associated to the oriented 4-cube commutes iff each sub 3-cube commutes (see  \cite{graycoherence}). Hence, in all, the data we should provide is only one 2-face, namely the one shared by $Y$ and $C_R$, and this just amounts to a morphism $Y\xrightarrow{\psi} C_R$. We need to show that such a morphism which makes the 4-cube commute exists and is unique.

Consider the left 3-cube
\[
\stik{1}{
{} \& C \ar[equal]{rr}[name=topback,below]{}[above]{} \ar[equal]{dd}[name=backleft,sloped,xshift=-1.5pc]{}[right,yshift=-1.5pc]{} \& {} \& C \ar[equal]{dd}\\
M \ar[crossing over]{rr}[name=topfront,above]{}[below, xshift=1pc]{Y}\ar[equal]{dd}[name=frontleft]{}[left]{} \ar{ur}[name=topleft,below]{} \& {} \& A \ar[equal]{dd}[name=frontright,sloped,xshift=1pc]{} \ar{ur}[name=topright,below]{} \& {}\\
{} \& C \ar[equal]{rr} \& {} \&C\\
M \ar{rr}[name=bottomfront,below]{C_R} \ar{ur}[name=bottomleft,sloped,yshift=-0.2pc]{} \& {} \& A \ar{ur}
\arrow[Rightarrow,to path={(topright) to[bend left] (topback)}]{}{}
\arrow[Rightarrow,to path={(frontright) to[bend right] (bottomfront)},"\psi"']{}{}
\arrow[equal,crossing over,to path={(frontright) to[bend right] (topright)}]{}{}
\arrow[Rightarrow,to path={(bottomfront) to[bend right] (bottomleft)}]{}{}
\arrow[equal,to path={(bottomleft) to[bend left] (backleft)}]{}{}
\arrow[equal,to path={(topback) to[bend left] (backleft)}]{}{}
\latearrow{commutative diagrams/crossing over,commutative diagrams/equal}{2-3}{4-3}{}
\latearrow{commutative diagrams/crossing over}{2-1}{2-3}{}
\latearrow{commutative diagrams/crossing over}{2-3}{1-4}{}
}
\]
By assumption it commutes. Its bottom face is the left face of $C_R$; its top face is the left face of $Y$, denote it by $f_Y$; its front face is $\psi$; the rest are degenerate. So we get the equation
\[
\nstik{1}{
{} \ar{dd}{Y}  \ar[yshift=-1em,shorten <=1em,shorten >=0.2em]{dr}[xshift=-0em]{\psi}\& \ar{d}{a}  \& \ar{d}{a}\\
{} \ar[yshift=-1.5em,shorten <=1.2em]{ur} \& \ar{d}{i_R}  \ar[yshift=-1em,shorten <=1em,shorten >=0.2em]{dr}[xshift=0em]{\epsilon} \& \ar[equal]{d}{}\\
{} \ar{d}{i}  \& \ar{d}{i} \ar[yshift=-1.5em,shorten <=1.2em]{ur} \& \ar[equal]{d}{}\\
{} \& {} \& {}
}
=
f_Y
\]
So since $\epsilon$ is part of an adjunction, we have that
\begin{align*}
\psi & =
\nstik{1}{
{} \ar{dd}{Y} \ar[yshift=-1em,shorten <=1em,shorten >=0.2em]{dr}[xshift=-0em]{\psi} \& \ar{d}{a}  \& \ar{d}{a}  \& {} \ar{d}{a}\\
{} \ar[yshift=-1.5em,shorten <=1.2em]{ur} \& \ar{d}{i_R}  \&  {} \ar{d}{i_R} \ar[yshift=-1em,shorten <=1em,shorten >=0.2em]{dr}[xshift=0em]{\epsilon} \& {} \ar[equal]{dd}\\
{} \ar[equal]{dd}{}  \& \ar[yshift=-1em,shorten <=1em,shorten >=0.2em]{dr}[xshift=-0em]{\eta} \ar[equal]{dd}{} \&  \ar{d}{i}  \ar[yshift=-1.5em,shorten <=1.2em]{ur}\& {} \\
{}  \& {} \ar[yshift=-1.5em,shorten <=1.2em]{ur} \& {}  \ar{d}{i_R}\& {} \ar{d}{i_R}\\
{}  \& {} \& {} \& {}
}
\\
& =
\nstik{1}{
{} \ar{dd}{Y}  \& {} \ar[yshift=-1em,shorten <=1em,shorten >=0.2em]{dr}[xshift=-0em]{\psi} \ar{dd}{Y}  \& \ar{d}{a}  \& {} \ar{d}{a}\\
{} \& {} \ar[yshift=-1.5em,shorten <=1.2em]{ur}  \&  {} \ar{d}{i_R} \ar[yshift=-1em,shorten <=1em,shorten >=0.2em]{dr}[xshift=0em]{\epsilon} \& {} \ar[equal]{dd}\\
{} \ar[equal]{dd}{}\ar[yshift=-1em,shorten <=1em,shorten >=0.2em]{dr}[xshift=-0em]{\eta}  \&  \ar{d}{i} \&  \ar{d}{i}  \ar[yshift=-1.5em,shorten <=1.2em]{ur}\& {} \\
{} \ar[yshift=-1.5em,shorten <=1.2em]{ur} \& {}\ar{d}{i_R}  \& {}  \ar{d}{i_R}\& {} \ar{d}{i_R}\\
{}  \& {} \& {} \& {}
}
=
 f_Y \circ \eta
\end{align*}
where the second step is justified by the four-interchange law in a bicategory. 

So $\psi$ is uniquely defined, and all we need to check is that it makes the front cube in the 4-cube commute.

The front cube is 
\[
\stik{1}{
{} \& M \ar{rr}[name=topback,below]{}[above]{} \ar{dd}[name=backleft,sloped,xshift=-1.5pc]{}[right,yshift=-1.5pc]{C_R} \& {} \& B \ar[equal]{dd}\\
M \ar[crossing over]{rr}[name=topfront,above]{}[below, xshift=1pc]{} \ar{dd}[name=frontleft]{}[left]{Y} \ar[equal]{ur}[name=topleft,below]{} \& {} \& B \ar[equal]{dd}[name=frontright,sloped,xshift=1pc]{} \ar[equal]{ur}[name=topright,below]{} \& {}\\
{} \& A \ar{rr} \& {} \&B\\
A \ar{rr}[name=bottomfront,below]{} \ar[equal]{ur}[name=bottomleft,sloped,yshift=-0.2pc]{} \& {} \& B \ar[equal]{ur}
\arrow[equal,to path={(topright) to[bend left] (topback)}]{}{}
\arrow[Rightarrow,to path={(frontright) to[bend right] (bottomfront)},"g_Y"]{}{}
\arrow[equal,crossing over,to path={(frontright) to[bend right] (topright)}]{}{}
\arrow[equal,to path={(bottomfront) to[bend right] (bottomleft)}]{}{}
\arrow[Rightarrow,to path={(bottomleft) to[bend left] (backleft)},"\psi"]{}{}
\arrow[Rightarrow,to path={(topback) to[bend left] (backleft)},"g_R"']{}{}
\latearrow{commutative diagrams/crossing over,commutative diagrams/equal}{2-3}{4-3}{}
\latearrow{commutative diagrams/crossing over}{2-1}{2-3}{}
\latearrow{commutative diagrams/crossing over,commutative diagrams/equal}{2-3}{1-4}{}
}
\]
So we need to show that $\psi\circ g_Y=g_R$.

Recall that
\[
g_R=\nstik{1}{
{} \ar{d}{b} \& \ar{d}{b} \ar[yshift=-1em,shorten <=1em,shorten >=0.2em]{dr}[xshift=0em]{\beta} \& \ar{d}{a} \& \ar{d}{a}\\
{} \ar[equal]{d} \ar[yshift=-1em,shorten <=1em,shorten >=0.2em]{dr}[xshift=-0.5em]{\eta} \& \ar{d}{g} \ar[yshift=-1.5em,shorten <=1.2em]{ur} \& \ar{d}{f} \ar[yshift=-1em,shorten <=1em,shorten >=0.2em]{dr}[xshift=0em]{\alpha_R^{-1}} \& \ar{d}{i_R}\\
{} \ar[equal]{d} \ar[yshift=-1.5em,shorten <=1.2em]{ur} \& \ar{d}{g_R} \& \ar{d}{g_R} \ar[yshift=-1.5em,shorten <=1.2em]{ur} \& \ar{d}{h}\\
{} \& {} \& {} \& {}
}
\]
Denoting the front face of the outer cube (the "$Y$"-cube ) by $g_Y$, we have $\beta=f_Y\circ\alpha\circ g_Y$.

Substituting we get
\[g_R=
\nstik{1}{
{} \ar{dd}{b} \& \ar{dd}{b} \ar[yshift=-1em,shorten <=1em,shorten >=0.2em]{dr}[xshift=0em]{g_Y}\& \ar{d}{Y} \& \ar{d}{Y} \ar[yshift=-1em,shorten <=1em,shorten >=0.2em]{dr}[xshift=0em]{f_Y}\& \ar{dd}{a} \& \ar{dd}{a}\\
{} \& {} \ar[yshift=-1.5em,shorten <=1.2em]{ur} \& \ar{d}{h} \ar[yshift=-1em,shorten <=1em,shorten >=0.2em]{dr}[xshift=-0.5em]{\alpha}\& \ar{d}{i} \ar[yshift=-1.5em,shorten <=1.2em]{ur}\& {}\& {}\\
{} \ar[equal]{d} \ar[yshift=-1em,shorten <=1em,shorten >=0.2em]{dr}[xshift=-0.5em]{\eta} \& \ar{d}{g}  \& \ar{d}{g} \ar[yshift=-1.5em,shorten <=1.2em]{ur}  \& \ar{d}{f} \& {} \ar{d}{f} \ar[yshift=-1em,shorten <=1em,shorten >=0.2em]{dr}[xshift=0em]{\alpha_R^{-1}} \& {} \ar{d}{i_R}\\
{} \ar[equal]{d} \ar[yshift=-1.5em,shorten <=1.2em]{ur} \& \ar{d}{g_R} \& \ar{d}{g_R}  \& {} \ar{d}{g_r} \& {} \ar{d}{g_R} \ar[yshift=-1.5em,shorten <=1.2em]{ur} \& {} \ar{d}{h}\\
{} \& {} \& {} \& {} \& {} \& {}
}
\]

Now we plug in the identity of $i$ in the fourth column as a composition of unit/counit maps, and interchange $g_Y$ and $\eta$:
\[g_R=
\nstik{0.95}{
{} \& {} \& {} \& {}\ar{d}{Y} \& {} \ar{d}{Y} \& {} \ar{d}{Y} \ar[yshift=-1em,shorten <=1em,shorten >=0.2em]{dr}[xshift=0em]{f_Y}\& {}\ar{dd}{a} \& {}\ar{dd}{a}\\
{} \& {} \& {} \& {}\ar[yshift=-1em,shorten <=1em,shorten >=0.2em]{dr}[xshift=0em]{\eta'}\ar[equal]{dd} \& {} \ar{d}{i} \& {} \ar{d}{i} \ar[yshift=-1.5em,shorten <=1.2em]{ur} \& {} \& {}\\
{} \ar{dd}{b} \ar[yshift=-1em,shorten <=1em,shorten >=0.2em]{dr}[xshift=0em]{g_Y} \& \ar{d}{Y} \& \ar{d}{Y} \& {} \ar[yshift=-1.5em,shorten <=1.2em]{ur} \& {} \ar{d}{i_R} \ar[yshift=-1em,shorten <=1em,shorten >=0.2em]{dr}[xshift=0em]{\epsilon'}\& {} \ar[equal]{dd} \& {}  \ar[equal]{dd} \& \ar[equal]{dd}\\
{} \ar[yshift=-1.5em,shorten <=1.2em]{ur} \& {}\ar{d}{h} \& {}\ar{d}{h} \ar[yshift=-1em,shorten <=1em,shorten >=0.2em]{dr}[xshift=-0.5em]{\alpha}\& {}\ar{d}{i} \& {} \ar{d}{i} \ar[yshift=-1.5em,shorten <=1.2em]{ur} \& {}\& {}\& {}\\
{} \ar[equal]{dd}  \& {} \ar[equal]{dd} \ar[yshift=-1em,shorten <=1em,shorten >=0.2em]{dr}[xshift=-0.5em]{\eta} \& \ar{d}{g} \ar[yshift=-1.5em,shorten <=1.2em]{ur}  \& \ar{d}{f} \& {} \ar{d}{f}  \& {} \ar{d}{f} \& {}\ar{d}{f}  \ar[yshift=-1em,shorten <=1em,shorten >=0.2em]{dr}[xshift=0em]{\alpha_R^{-1}}\& {}\ar{d}{i_R} \\
{}  \& \ar[yshift=-1.5em,shorten <=1.2em]{ur} \& \ar{d}{g_R}  \& {} \ar{d}{g_r} \& {} \ar{d}{g_R}  \& {} \ar{d}{g_R}\& {}\ar{d}{g_R} \ar[yshift=-1.5em,shorten <=1.2em]{ur}\& {} \ar{d}{h}\\
{} \& {} \& {} \& {} \& {} \& {}\& {} \& {}
}
\]

using some interchanging we get to
\[g_R=
\nstik{0.95}{
{} \& {}\ar{d}{Y} \& {}\ar{d}{Y} \& {}\ar{d}{Y} \& {} \ar{d}{Y} \& {} \ar{d}{Y} \& {}\ar{d}{Y} \ar[yshift=-1em,shorten <=1em,shorten >=0.2em]{dr}[xshift=0em]{f_Y} \& {}\ar{dd}{a}\\
{} \& {}\ar[equal]{dd} \ar[yshift=-1em,shorten <=1em,shorten >=0.2em]{dr}[xshift=0em]{\eta'} \& {} \ar{d}{i} \& {} \ar{d}{i} \& {} \ar{d}{i} \& {} \ar{d}{i}  \& {}\ar{d}{i}\ar[yshift=-1.5em,shorten <=1.2em]{ur} \& {}\\
{} \ar{dd}{b} \ar[yshift=-1em,shorten <=0em,shorten >=0.2em]{dr}[xshift=0em]{g_Y} \& {} \ar[yshift=-1.5em,shorten <=1.2em]{ur} \& {} \ar{d}{i_R} \& {}  \ar{d}{i_R} \& {} \ar{d}{i_R} \ar[yshift=-1em,shorten <=1em,shorten >=0.2em]{dr}[xshift=0em]{\epsilon'}\& {} \ar[equal]{dd} \& {}  \ar[equal]{dd} \& \ar[equal]{dd}\\
{} \ar[yshift=-1.5em,shorten <=1.2em]{uuur} \& {}\ar{d}{h} \& {}\ar{d}{h} \& {} \ar[yshift=-1em,shorten <=1em,shorten >=0.2em]{dr}[xshift=-0.5em]{\alpha} \ar{d}{h} \& {} \ar{d}{i} \ar[yshift=-1.5em,shorten <=1.2em]{ur} \& {}\& {}\& {}\\
{} \ar[equal]{dd}  \& {} \ar[equal]{dd}  \& {} \ar[equal]{dd} \ar[yshift=-1em,shorten <=1em,shorten >=0.2em]{dr}[xshift=-0.5em]{\eta}   \& \ar{d}{g} \ar[yshift=-1.5em,shorten <=1.2em]{ur} \& {} \ar{d}{f}  \& {} \ar{d}{f}\ar[yshift=-1em,shorten <=1em,shorten >=0.2em]{dr}[xshift=0em]{\alpha_R^{-1}} \& {}\ar{d}{i_R}  \& {}\ar{d}{i_R} \\
{}   \& {} \& {} \ar[yshift=-1.5em,shorten <=1.2em]{ur} \& {} \ar{d}{g_R} \& {} \ar{d}{g_R}  \& {} \ar{d}{g_R} \ar[yshift=-1.5em,shorten <=1.2em]{ur} \& {}\ar{d}{h} \& {} \ar{d}{h}\\
{} \& {} \& {} \& {} \& {} \& {}\& {} \& {}
}
\]

Finally, we can cancel the middle part, which is just $\alpha_R^{-1}\circ\alpha_R$ and get \[
g_R=f_Y\circ \eta'\circ g_Y=\psi\circ g_Y
\]
as required. This completes the proof.
\end{proof}

\begin{Remark}
A completely dual argument yields an initial object in the Hom-category, which is not isomorphic to the final object, so it is not the case that the Hom-category is contractible. 
\end{Remark}

\section{The category \texorpdfstring{$\PP$}{P} of polynomial functors}
\label{sec:p}
\subsection{Recollection of \texorpdfstring{$\PP$}{P} and its Grothendieck group}
\label{ssec:defP}
We consider the category $\PP$ of polynomial functors over a field $\kk$ of characteristic 0, defined by Friedlander and Suslin in \cite{FrieSusPoly}. 
\begin{Definition}
The category of polynomial functors $\PP$ is the category whose objects are functors from $\Vect$ to $\Vect$ that induce polynomial maps on the $\Hom$ spaces, i.e. \[F:\Vect\rightarrow\Vect\] such that for any two spaces $V,W$, the map \[\Hom_\kk(V,W)\rightarrow\Hom_\kk(FV,FW)\] is a polynomial map.
\end{Definition}
\begin{Definition}
For any finite set $S$ we consider the category $\PPS{S}$ of polynomial functors from the category $\Sh(S)$ of sheaves of vector spaces over $S$ to $\Vect$.
\end{Definition}

It is easy to check that $\PPS{S}$ satisfies the universal property of the tensor product of categories in $\CatAd$ as defined by Deligne in \cite{deligne}. Namely, for any set $S$ we have the functor 
\begin{align}
\label{tensorstructuremaps}
\boxtimes_S:\PP^{\times S} &\rightarrow \PPS{S} & \forall V \in \Sh(S) (\boxtimes_S F_s)(V_s)&=\otimes_S F_s(V_s)
\end{align}
that presents $\PPS{S}$ as the Deligne tensor of $\PP^{\times S}$.

\begin{Note}
The Grothendieck $K$-group of $\PP$ has the structure of an algebra coming from the tensor product on $\PP$. See also \autoref{SSH-kgroup} for the description of the algebra structure in terms of the Hopf category structure on $\PP$.
\end{Note}

\begin{Proposition} 
\label{prop:PolyGrothGroup}
The Grothendieck $K$-group of $\PP$ is isomorphic as an algebra to $\Lambda$ - the algebra of symmetric polynomials in a countable number of variables.
\end{Proposition}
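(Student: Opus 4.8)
The plan is to produce an explicit algebra isomorphism via the classical Frobenius characteristic map, exploiting the fact that $\PP$ is semisimple in characteristic $0$ and that the algebra product on $K(\PP)$ is the pointwise tensor product of functors. First I would record the degree decomposition: every polynomial functor splits canonically into homogeneous pieces, $\PP=\bigoplus_{n\ge 0}\PP_n$, where $\PP_n$ is the full subcategory of functors homogeneous of degree $n$ (the splitting is the eigenspace decomposition of $F(V)$ under the scaling operators $F(t\cdot\Id_V)$, $t\in\kk^\times$). This is an orthogonal decomposition of the semisimple category $\PP$, so $K(\PP)=\bigoplus_n K(\PP_n)$ as graded abelian groups. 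The monoidal product recognized in \autoref{SSH-kgroup} is $(F,G)\mapsto m(F\boxtimes G)$ with $m=m_a$ the image of $a\colon\fset{2}\to\fset{1}$, i.e. precomposition with the diagonal $\Sh(\fset{1})=\Vect\to\Sh(\fset{2})$; by \eqref{tensorstructuremaps} this gives $\big(m(F\boxtimes G)\big)(V)=F(V)\otimes G(V)$. Since the pointwise tensor of homogeneous functors of degrees $n,m$ is homogeneous of degree $n+m$, the grading is compatible with multiplication.

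Next I would invoke the classification of simple polynomial functors (Friedlander--Suslin \cite{FrieSusPoly}, or classically Schur): in characteristic $0$ the category $\PP_n$ is semisimple with simple objects the Schur functors $\{S^\lambda:\lambda\vdash n\}$. Consequently $K(\PP)$ is free abelian on the classes $[S^\lambda]$ indexed by all partitions $\lambda$, graded by $|\lambda|$.

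Then I would define the characteristic map $\operatorname{ch}\colon K(\PP)\to\Lambda$. For $F\in\PP_n$ and $V=\kk^d$, the functor $F$ makes $F(V)$ into a polynomial $\GL_d$-representation whose character is a symmetric polynomial in the eigenvalues $x_1,\dots,x_d$; these polynomials are compatible under the inclusions $\kk^d\hookrightarrow\kk^{d+1}$ (setting $x_{d+1}=0$), so they assemble into a symmetric function $\operatorname{ch}_F\in\Lambda$ of degree $n$, with $\operatorname{ch}(S^\lambda)=s_\lambda$ the Schur function. Multiplicativity, which is the only non-formal point, is immediate from the trace: for any $g\in\GL_d$ one has $\operatorname{tr}\big((F\otimes G)(g)\big)=\operatorname{tr}\big(F(g)\big)\cdot\operatorname{tr}\big(G(g)\big)$ on $F(V)\otimes G(V)$, so $\operatorname{ch}_{F\otimes G}=\operatorname{ch}_F\cdot\operatorname{ch}_G$; additivity on direct sums, hence on $K$-classes since $\PP$ is semisimple, is clear. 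Finally $\operatorname{ch}$ is an isomorphism of abelian groups because it carries the $\ZZ$-basis $\{[S^\lambda]\}$ of $K(\PP)$ bijectively onto the $\ZZ$-basis $\{s_\lambda\}$ of $\Lambda$, and combined with multiplicativity this gives the desired algebra isomorphism.

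The main obstacle is the second step: identifying the simple objects of $\PP_n$ with partitions of $n$ and matching $S^\lambda$ with $s_\lambda$. This is the standard Schur--Weyl correspondence and semisimplicity in characteristic $0$, so it can be cited rather than reproved; once it is in place, together with the identification of the product on $K(\PP)$ as the pointwise tensor coming from $\fset{2}\to\fset{1}$, the remainder of the argument is formal.
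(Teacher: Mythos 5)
Your proposal is correct, and it shares the paper's core mechanism: both define the morphism $K(\PP)\to\Lambda$ by taking characters of the $\GL_d$-representations $F(\kk^d)$, using the Friedlander--Suslin identification of polynomial functors with polynomial representations of $\GL_d$ \cite{FrieSusPoly}, and both obtain multiplicativity from the fact that the character of a tensor product of representations is the product of the characters. Where you genuinely diverge is the bijectivity argument. The paper never invokes the classification of simple objects: it proves injectivity from the fact that a rational representation of $\GL_n$ is determined up to isomorphism by its character on the torus, and surjectivity from the fact that the images of the functors $V\mapsto\Sym^n V$ are the complete (\emph{whole}) symmetric functions, which generate $\Lambda$ as a ring. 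You instead match bases: the simple objects of $\PP_n$ are the Schur functors $S^\lambda$ with $\lambda\vdash n$, and $\operatorname{ch}(S^\lambda)=s_\lambda$, so the map carries a $\ZZ$-basis of $K(\PP)$ bijectively onto the $\ZZ$-basis of Schur functions. Your route buys a sharper conclusion --- the isomorphism is basis-preserving, which is precisely the distinguished-basis datum relevant to the PSH structure appearing elsewhere in the paper --- at the cost of a heavier input (Schur--Weyl theory and the identification of the characters of Schur functors with Schur polynomials), whereas the paper's route is more economical, needing only character-determinacy of rational representations and a generating set of $\Lambda$. Both arguments are complete.
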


\begin{proof}
As is shown in \cite{FrieSusPoly}, the subcategory of polynomial functors of degree $\leq d$ is equivalent to the category of polynomial representations of $\GL_n$ of degree $\leq d$, when $n\geq d$. Sending a representation to its character on the torus gives an morphism of the $K$ group of this subcategory with the $\ZZ$-group of symmetric polynomials in $n$ variables of degree $\leq d$. Going to the limit gives us a morphism $F:K(\PP)\rightarrow \Lambda$.

This morphism is multiplicative since the character of the tensor product of representations is the product of the characters. It is injective because a rational representation of $\GL_n$ is determined up to isomorphism by its character on the torus. It is surjective since the images of the polynomial functor $V\mapsto\Sym^nV$ generate $\Lambda$ (they are the \emph{whole} symmetric functions).
\end{proof}
\begin{Corollary}
The Grothendieck $K$-group of $\PPS{S}$ is isomorphic to $\Lambda^{\otimes S}$.
\end{Corollary}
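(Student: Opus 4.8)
The plan is to reduce everything to two facts already established in the excerpt: first, that $\PPS{S}$ is a Deligne tensor product of copies of $\PP$, and second, that passing to $K$-groups turns the Deligne tensor into the ordinary tensor of $\ZZ$-modules. Concretely, the display \eqref{tensorstructuremaps} together with the remark immediately following it records that the external product $\boxtimes_S\colon\PP^{\times S}\rightarrow\PPS{S}$ exhibits $\PPS{S}$ as the Deligne tensor $\bigotimes_{s\in S}\PP$ in $\CatAd$. So the entire content of the corollary is to apply the $K$-group functor to this identification, and no new geometric input is needed beyond \ref{prop:PolyGrothGroup}.

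First I would invoke the functor $(\CatAdTen)^0\xrightarrow{K(-)}\ZAdTen$ constructed in the proof of \autoref{SSH-kgroup}, which by construction sends Deligne tensor products to tensor products of $\ZZ$-modules. Applying it to the coCartesian arrow $\boxtimes_S$ yields a canonical isomorphism $K(\PPS{S})\cong\bigotimes_{s\in S}K(\PP)$. Combining this with Proposition \ref{prop:PolyGrothGroup}, which identifies $K(\PP)$ with $\Lambda$, gives $K(\PPS{S})\cong\Lambda^{\otimes S}$. Keeping track of the pointwise tensor product on $\PPS{S}$ (under which $\boxtimes_S$ is monoidal) upgrades this to an isomorphism of algebras, matching the algebra statement of \ref{prop:PolyGrothGroup}.

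The one point that deserves justification — and the only place where anything could conceivably go wrong — is the claim that $K$ of a Deligne tensor is the tensor of the $K$-groups. Since all categories in sight are semisimple, this is transparent: the simple objects of $\bigotimes_{s\in S}\PP$ are exactly the external products $\boxtimes_s X_s$ of simples $X_s$ of the factors, so the free $\ZZ$-module on the simples of the Deligne tensor is canonically the tensor product of the free $\ZZ$-modules on the simples of each factor. For a finite index set $S$ this follows directly from the universal property recorded after \eqref{tensorstructuremaps}, and because $S$ is finite there is no completion or convergence issue to address. Thus I do not expect a genuine obstacle here; the corollary is an immediate consequence of \ref{prop:PolyGrothGroup} and the monoidality of $K(-)$ with respect to the Deligne tensor.
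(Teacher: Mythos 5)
Your proof is correct and is precisely the argument the paper leaves implicit: the corollary appears with no proof at all, the intended reasoning being exactly what you wrote — $\boxtimes_S$ exhibits $\PPS{S}$ as the Deligne tensor of copies of $\PP$, the $K$-group functor takes Deligne tensors to tensor products of $\ZZ$-modules (immediate in the semisimple setting, since the simples of the tensor are the external products of simples of the factors), and $K(\PP)\cong\Lambda$ by Proposition \ref{prop:PolyGrothGroup}. Your extra care about the algebra structure and the finiteness of $S$ is sound but adds nothing beyond what the paper's implicit argument requires.
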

\begin{Remark}
It is shown in \cite{Zelbook} that $\Lambda \cong \bigoplus_n K(\Rep(S_n))$ 
\end{Remark}

\subsection{The SSH structure on \texorpdfstring{$\PP$}{P}}
\label{sec:selfadjointhopf}
In this section we put a SSH structure on $\PP$, in the sense of \autoref{sec:SSHDef}, i.e. we construct a symmetric monoidal functor $\FinDisj\rightarrow\CatAdTen$ preserving comma squares which sends the set $\fset{1}$ to the category $\PP$.

\begin{Proposition}
The following collection of data gives a SSH structure on $\PP$.
\begin{itemize}
\item To any finite set $S$, we assign the category $\PPS{S}$ defined above.
\item To any map of finite sets $\varphi:S\rightarrow T$ we assign the functor \[m_\varphi:\PPS{S}\rightarrow\PPS{T}\] defined by the formula
\[
m_\varphi(F):=F\circ \varphi^*
\]
note that it has a natural adjoint (both left and right),$\Delta_\varphi$, given by \[
\Delta_\varphi(\Phi):=\Phi\circ \varphi_*
\]
and hence is in $\CatAd$

\item To any map $(S_i)\xrightarrow{(\varphi_i)} T$ in $\FinDisj$, over the active map $\pset{n}\rightarrow\pset{1}$, we assign the functor
\[
m_{(\varphi_i)}:\prod \PPS{S_i}\rightarrow\PPS{T}
\]
given by 
\[
m_{(\varphi_i)}(F_i)(V)=\bigotimes F_i(\varphi_i^* V)
\]
\item To any commutative square of sets we associate the $2$-commutative diagram
\[
\tik
S\ar{r}{a} \ar{d}{c} &T \ar{d}{b} \\ 
R \ar{r}{d} & U 
\tak
\rightsquigarrow
\tik
\PPS{S}\arrow[]{d}{m_c} \arrow[]{r}{m_a} & \PPS{T} \arrow[]{d}{m_b} \arrow[shorten >=0.4cm,shorten <=0.4cm,Rightarrow]{dl}[above,sloped]{\sim} \\ 
\PPS{R} \arrow[]{r}{m_d} & \PPS{U}
\tak
\]
with an isomorphism in the middle which comes from the isomorphims $\varphi^*\psi^*\cong (\psi\varphi)^*$.
\end{itemize}

\end{Proposition}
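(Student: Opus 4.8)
The plan is to verify the four requirements of \autoref{maindef} for the proposed data, reducing everything to standard facts about the push--pull operations $\varphi^*,\varphi_*$ on sheaves over finite sets. First I would check that the assignments are well defined and land in $\CatAd$. Since $\varphi^*:\Sh(T)\to\Sh(S)$ is exact and linear, precomposition preserves polynomiality and homogeneous degree, so $m_\varphi(F)=F\circ\varphi^*$ indeed lies in $\PPS{T}$ and $m_\varphi$ is a degree-zero exact functor; the same holds for the multi-maps $m_{(\varphi_i)}$, whose target grading is the sum of the source gradings, giving condition (1) of \autoref{maindef}. For finite sets $\varphi_*$ is simultaneously a left and right adjoint of $\varphi^*$, and passing to precomposition functors turns these biadjunctions into adjunctions $m_\varphi\dashv\Delta_\varphi$ and $\Delta_\varphi\dashv m_\varphi$; this is precisely the assertion that $m_\varphi$ admits the stated adjoint $\Delta_\varphi(\Phi)=\Phi\circ\varphi_*$ on both sides, placing $m_\varphi$ in $\CatAd$.

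Next I would establish that the data assembles into a morphism of fibrations over $\FinSet_*$. Pseudofunctoriality is inherited from the pseudofunctor $(-)^*:\FinSet^{op}\to\Cat$: the coherence isomorphism $\varphi^*\psi^*\cong(\psi\varphi)^*$ supplies the compositor, and the associativity and unit coherences (hence the fact that $3$-cells go to $3$-cells, i.e.\ commutative cubes to commutative cubes) follow from those of $(-)^*$. For the coCartesian condition (2) I would reduce to the coCartesian lifts over the active maps $I\to\pset1$ described in \autoref{ssec:defP}: when $T=\coprod_i S_i$ with the $\varphi_i$ the inclusions, the formula $m_{(\varphi_i)}(F_i)(V)=\bigotimes_i F_i(\varphi_i^*V)=\bigotimes_i F_i(V|_{S_i})$ is exactly the map $\boxtimes_S$ of \eqref{tensorstructuremaps} presenting $\PPS{T}$ as the Deligne tensor of the $\PPS{S_i}$, which is coCartesian in $\CatAdTen$; the empty case $()\to\PPS{\emptyset}\simeq\Vect$ picks out the unit object $\kk$ and is the required lift of $\pset0\to\pset1$.

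The main step is the Beck--Chevalley condition (3). Given a Cartesian square of finite sets with $S=T\times_U R$, the associated square of $m$'s commutes via the compositor, and I would show its left (equivalently right) mate is invertible. The key observation is that taking the mate commutes with precomposition: because the adjunctions $m_\varphi\dashv\Delta_\varphi$ are induced by the sheaf-level biadjunctions $\varphi^*\dashv\varphi_*$, the mate $2$-morphism of the $m$-square is obtained by precomposing with the base-change transformation of the corresponding square of sheaf operations relating $d^*b_*$ and $c_*a^*$. It therefore suffices to check that this base-change map is an isomorphism, which is immediate for finite sets: since $\varphi_*=\varphi_!$, both $(d^*b_*\mathcal{F})_r$ and $(c_*a^*\mathcal{F})_r$ compute the direct sum of $\mathcal{F}$ over the fiber $\{t:b(t)=d(r)\}\cong\{s:c(s)=r\}$, the identification being exactly the Cartesianness of the square. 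By \autoref{prop:bccart} this simultaneously exhibits the image square as an \comma square.

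I expect the main obstacle to be the $2$-categorical bookkeeping in the Beck--Chevalley step rather than the base-change isomorphism itself, which is essentially trivial. The work lies in verifying that the mate of the image square is genuinely the precomposition of the base-change transformation, i.e.\ that the unit/counit data of the precomposition adjunctions are the ones induced by the chosen unit/counit data of the $(\varphi^*,\varphi_*)$ adjunctions, so that forming the mate and applying $\Hop$ commute. Once this compatibility is pinned down, every remaining verification is a routine consequence of the pseudofunctoriality of $(-)^*$.
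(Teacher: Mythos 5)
Your proposal is correct and takes essentially the same approach as the paper: you reduce the Beck--Chevalley condition for the squares of precomposition functors $m_\varphi$ to the base-change isomorphism $d^*b_*\cong c_*a^*$ for sheaves on finite sets (which the paper dispatches by citing proper base change and you verify fiberwise), and you obtain the adjunctions and the coCartesian/monoidal conditions from the biadjunction $(\varphi_*\dashv\varphi^*\dashv\varphi_*)$ and the Deligne tensor presentation \eqref{tensorstructuremaps}, exactly as the paper does. The additional bookkeeping you flag (compatibility of mates with precomposition) is precisely the step the paper treats as immediate, so your write-up is a more detailed rendering of the same argument.
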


\begin{proof}
Denote by $\Hop$ the resulting functor. As noted in the proof of \autoref{prop:PolyGrothGroup} $\PP$ is equivalent to a direct sum of subcategories of finite dimensional representations of general linear groups. Explicitly: \[
\PP \cong \bigoplus_d \Rep_{\deg=d}(\GL_d)
\]
So all categories involved in the construction are graded 2-vector spaces.

First, we have explicit adjunctions $(m_\varphi\dashv\Delta_\varphi)$ and $(\Delta_\varphi\dashv m_\varphi)$ given by the explicit adjunctions $(\varphi_*\dashv\varphi^*)$ and $(\varphi^*\dashv\varphi_*)$(see \autoref{app:pullpushadjsets}). So $\Hop$ lands in $\CatAd$. That it is symmetric monoidal follows from \eqref{tensorstructuremaps}.

Regarding the other conditions:

Obviously, $\Hop(\emptyset)$ is canonically equivalent to $\Vect$, and $\Hop(\fset{1})$ is canonically equivalent to $\PP$.

It remains to show that $\Hop$ sends Cartesian squares in the fiber over $\pset{1}$ to squares satisfying the BC condition.
Consider a Cartesian square of maps of sets and the corresponding diagram of tensor powers 
\[
\tik
S\ar{r}{a} \ar{d}{c} &T \ar{d}{b} \\ 
R \ar{r}{d} & U 
\tak
\rightsquigarrow
\tik
\PPS{S}\arrow[]{d}{m_c} \arrow[]{r}{m_a} & \PPS{T} \arrow[]{d}{m_b} \arrow[shorten >=0.4cm,shorten <=0.4cm,Rightarrow]{dl}[above,sloped]{\alpha} \\ 
\PPS{R} \arrow[]{r}{m_d} & \PPS{U}
\tak
\]

Since the funtors $m_\varphi$ were defined as precomposition with $\varphi^*$, it is enough to show that the square 
\[
\stik{1}{
\Sh(S) \& \Sh(T) \ar{l}{a^*} \arrow[shorten >=0.4cm,shorten <=0.4cm,Rightarrow]{dl}[above,sloped]{\sim} \\
\Sh(R) \ar{u}{c^*} \& \Sh(U) \ar{l}{d^*} \ar{u}{b^*}
}
\]
satisfies the Beck-Chevalley condition, but this follows immediately from proper base change, so we are done.

\end{proof}


\section{The Heisenberg double associated to positive self-adjoint Hopf algebras}
\label{sec:algebraFock}


\subsection{The Heisenberg double}
\label{ssec:algebraFock}

Given a pair of Hopf algebras $H,H'$ and a pairing between them satisfying certain requirements one can endow the space $H\otimes H'$ with an algebra structure and form an algebra called the \emph{Heisenberg double}. The description of the general notion can be found in \cite{STSMHeisdouble} and \cite{kapranovheisenberg}. In this article we are concerned with the specific case of this construction in the case of positive selfadjoint Hopf algebras introduced by Zelevinsky in \cite{Zelbook}. Let us recall the definitions:
\begin{Definition}
A positive selfadjoint Hopf (PSH) algebra is a graded connected Hopf algebra over $\ZZ$ with an inner product and a distinguished finite orthogonal $\ZZ$ basis in each grade s.t. multiplication and comultiplication are adjoint and take elements with positive coefficients to elements with positive coefficients.
\end{Definition}

\begin{Definition}
A graded Hopf algebra over $\ZZ$ $A=\bigoplus_{n\geq 0}A_n$ is called \emph{connected} if the unit morphism and the counit morphism restricted to $A_0$ give an isomorphism of $A_0$ with $\ZZ$
\end{Definition}

For a positive self-adjoint Hopf algebra $A$ let us consider the dual pair of Hopf algebras $(A,A)$ and outline the construction of the Heisenberg double in this case. Simultaneously we will show that this algebra has a natural action on $A$ (considered as a $\ZZ$ module). We will refer to this action as \emph{Fock space action} by the analogy with the case of infinite dimensional Heisenberg algebra, which is a special case of this construction as noted in \autoref{ssec:generators}. Applying the analog of this construction in the categorical setting will allow us to construct the natural categorical Fock space action for any SSH category in \autoref{sec:heisenbergaction}.

Consider a PSH algebra $A$ and denote the adjoint multiplication and comultiplication maps by
\begin{align*}
m: A\otimes A &\rightarrow A &\Delta:A&\rightarrow A\otimes A
\end{align*}
Note that $A$ is commutative and cocommutative, a result proven in \cite{Zelbook}.

For each $x\in A$ we define operators $m_x,\Delta_x:A\rightarrow A$ by the formulas
\begin{equation}
\label{def:mxdeltax}
\begin{aligned}
m_{x} &=m\circ i_x & \Delta_x &=j_x\circ\Delta
\end{aligned}
\end{equation}
where 
\begin{align*}
i_x(y)&=x\otimes y  & j_x(y\otimes z)&=z<x,y>
\end{align*}
\begin{Remark}
Note that $m_x, \Delta_x$ are adjoint for any $x \in A$. 
\end{Remark}
We use these operators to define a morphism of $\ZZ$ groups 
\begin{align*}
\varphi:A\otimes A &\rightarrow\End_\ZZ(A) &x\otimes y &\mapsto m_x\Delta_y
\end{align*}
\begin{Proposition}
\label{prop:heisdouble}
$\varphi$ is injective and its image is a subalgebra of $End_{\ZZ}(A)$
\end{Proposition}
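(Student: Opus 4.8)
The plan is to treat the two assertions separately: injectivity of $\varphi$ follows from a grading argument together with non-degeneracy of the inner product, while closure of the image under composition rests on a single commutation relation between the operators $\Delta_y$ and $m_z$, namely
\[
\Delta_y \circ m_z = \sum_{(y)} m_{\Delta_{y_{(1)}}(z)} \circ \Delta_{y_{(2)}},
\]
where we write $\Delta(y) = \sum_{(y)} y_{(1)} \otimes y_{(2)}$ in Sweedler notation. I expect the derivation of this identity to be the main content of the proof; everything else is formal.

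For injectivity, suppose $\sum_i m_{x_i} \Delta_{y_i} = 0$ in $\End_\ZZ(A)$. Since $m_x$ and $\Delta_y$ are homogeneous of degrees $\deg x$ and $-\deg y$ respectively, I would first decompose the relation into its bihomogeneous components, reducing to the case where all $x_i$ lie in a single $A_p$ and all $y_i$ in a single $A_q$. Evaluating the operator on $w \in A_q$ and using the counit axiom together with connectedness (so that $A_0 \cong \ZZ$), one computes $\Delta_{y_i}(w) = \langle y_i, w \rangle \cdot 1$, whence the operator sends $w \mapsto \sum_i \langle y_i, w \rangle\, x_i$. Vanishing of this for all $w \in A_q$ says exactly that the image of $\sum_i x_i \otimes y_i$ under the natural map $A_p \otimes A_q \to \Hom_\ZZ(A_q, A_p)$, $x \otimes y \mapsto \bigl(w \mapsto \langle y, w\rangle x\bigr)$, is zero. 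Since the inner product on $A_q$ has diagonal, positive Gram matrix in the distinguished basis, it is non-degenerate, so this map is injective and therefore $\sum_i x_i \otimes y_i = 0$.

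For the subalgebra statement, I first record the elementary identities $m_x m_{x'} = m_{x x'}$ (associativity), $\Delta_y \Delta_{y'} = \Delta_{y' y}$ (coassociativity), and $m_1 \Delta_1 = \Id$ (so that the image is unital). The commutation relation above is then obtained by expanding $\Delta_y(zu) = \sum \langle y, (zu)_{(1)}\rangle (zu)_{(2)}$, using the Hopf axiom $\Delta(zu) = \Delta(z)\Delta(u)$ to write $(zu)_{(1)} = z_{(1)} u_{(1)}$, and applying self-adjointness $\langle y, ab\rangle = \langle \Delta y, a \otimes b\rangle = \sum_{(y)} \langle y_{(1)}, a\rangle \langle y_{(2)}, b\rangle$ to separate the contributions of $z$ and $u$. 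Granting this, a direct computation gives
\[
(m_x \Delta_y)(m_z \Delta_w) = \sum_{(y)} m_{\,x \cdot \Delta_{y_{(1)}}(z)}\ \Delta_{\,w \cdot y_{(2)}},
\]
a finite sum of operators of the required form, so the image is closed under composition and hence a subalgebra. The only delicate point is the bookkeeping in the commutation relation itself — keeping track of which Sweedler factor of $\Delta y$ is paired against $z$ versus against $u$, and invoking the Hopf axiom and self-adjointness in the correct order. Once that relation is established, both closure and injectivity are routine.
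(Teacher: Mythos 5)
Your subalgebra half is correct and is essentially the paper's own argument: the paper establishes the identical key relation, written there as $\Delta_x m_y = m_{\Delta_{x_{(2)}}(y)}\Delta_{x_{(1)}}$ (the same as yours up to relabeling of Sweedler factors, harmless since $A$ is cocommutative), together with $m_xm_y=m_{xy}$ and $\Delta_x\Delta_y=\Delta_{xy}$. The only difference is the derivation: the paper computes the pairing $\langle \Delta_x m_y z,u\rangle=\langle m(y\otimes z),m(x\otimes u)\rangle=\langle y\otimes z,m(\Delta x\otimes\Delta u)\rangle=\cdots$ and strips off $u$ at the end, whereas you expand $\Delta_y(zu)$ directly; both use exactly self-adjointness plus the Hopf axiom, and your version even avoids the implicit appeal to non-degeneracy needed to remove the pairing in the paper's computation.

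The injectivity half, however, has a genuine gap in its first step. Homogeneity of $m_x$ and $\Delta_y$ only decomposes the relation $\sum_i m_{x_i}\Delta_{y_i}=0$ according to \emph{operator} degree $d=\deg x_i-\deg y_i$: two terms with $(\deg x_i,\deg y_i)=(p,q)\neq(p',q')$ but $p-q=p'-q'$ are operators of the same degree, so you cannot reduce to a single block $(p,q)$ as claimed. This is not a cosmetic issue, because your next step breaks without it: if you evaluate on $w\in A_q$ while terms with $\deg y_i<q$ are still present, those terms contribute $x_i\cdot\Delta_{y_i}(w)$, which lies in the same graded piece as $\sum_{\deg y_i=q}\langle y_i,w\rangle\, x_i$ but is not of that form, so the formula "$w\mapsto\sum_i\langle y_i,w\rangle x_i$" is simply false for such mixed sums. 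The repair is the minimality device used in the paper: let $q_0$ be minimal among $\deg y_i$ over the terms with $x_i\neq 0$ and evaluate on $w\in A_{q_0}$; terms with $\deg y_i>q_0$ vanish because $\Delta_y$ annihilates $A_n$ for $n<\deg y$ (a degree count on $\Delta$, which you never state but need), there are no terms below $q_0$, and the terms in degree exactly $q_0$ give your expression. Your non-degeneracy argument then kills the $A\otimes A_{q_0}$-component of $\sum_i x_i\otimes y_i$, and induction on the set of degrees occurring finishes the proof. (The paper shortcuts this by taking the $y_i$ to be orthogonal basis vectors and applying the operator to $y_{i_0}$ itself.) So your ingredients suffice, but the minimal-degree induction is genuinely needed and is missing from the write-up.
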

\begin{Notation}
Since $\varphi$ is injective, it induces an algebra structure on $A\otimes A$. We denote the algebra $A\otimes A$, with the algebra structure given by $\varphi$, by $\Heis(A)$. The action of $\Heis(A)$ on $A$ defined by $\varphi$ is called the \emph{Fock space action}.
\end{Notation}
\begin{proof}
To prove that the image of $\varphi$ is a subalgebra note the we have the following relations $\forall x,y \in A$:
\begin{align}
\label{ref:heisenbergrelations}
m_xm_y &= m_{m(xy)}=m_{m(yx)}=m_ym_x\\
\Delta_x\Delta_y &= \Delta_{yx}=\Delta_{xy}=\Delta_y\Delta_x\\
\label{eqn:deltam}
\Delta_xm_y &= m\Delta^2_{\Delta(x)}i_y
\end{align}

where if $\Delta(x)=x_{(1)}\otimes x_{(2)}$ (in Sweedler notation) then 
\begin{equation}
\Delta^2_{\Delta(x)}i_y(z)=\Delta^2_{\Delta(x)}(y\otimes z):=\Delta_{x_{(1)}}y\otimes\Delta_{x_{(2)}}z
\end{equation}

The first two relations hold since multiplication and comultiplication in $A$ are associative and commutative and the relation \eqref{eqn:deltam} holds since (as shown in \cite{Zelbook}) $\forall z,u \in A$:
\begin{multline*}
<\Delta_xm_yz,u>=<\Delta_xm(y\otimes z),u>=\\
<m(y \otimes z), m(x \otimes u)>=<y\otimes z, \Delta m(x \otimes u)>=\\
<y \otimes z,m(\Delta x \otimes \Delta u)>=<y \otimes z,m((x_{(1)}\otimes x_{(2)})\otimes\Delta u)>=\\
<\Delta_{x_{(1)}}y \otimes\Delta_{x_{(2)}}z,\Delta u>=<m(\Delta_{x_{(1)}}y \otimes\Delta_{x_{(2)}}z),u>
\end{multline*}
(we used the fact that $m_{x_{(1)}\otimes x_{(2)}}$ is adjoint to $\Delta_{x_{(1)}\otimes x_{(2)}}$ on $A^{\otimes 2}$)\\
So explicitly, the third relation gives us that 
\[
\Delta_xm_y=m_{\Delta_{x_{(2)}}(y)}\Delta_{x_{(1)}}\in\varphi(A\otimes A) 
\]

To prove that $\varphi$ is injective we use the fact that $A$ is graded and each grade has an orthogonal basis. Let $\sum_i x_iy_i \in Ker \varphi$ and assume without loss of generality that $y_i$ are the elements of the orthogonal basis of $A$. Let $y_{i_0}$ be an element of minimal degree so that $x_{i_0} \neq 0$ and let $r=\deg y_{i_0}$. It follows from the definition of $\Delta_y$ that for every $z \in A_n, y \in A_m$ $\Delta_yz=0$ when $n<m$, and $\Delta_yz=<y,z>$ for $n=m$. Hence by applying the operator $\sum_i m_{x_i}\Delta_{y_i}$ to $y_{i_0}$ we get $x_{i_0}=0$, a contradiction.
\end{proof}
To construct the categorification of the Fock space action in \autoref{ssec:heisenbergcategorification} we will use the following key
\begin{Observation}
\label{heisenbergrelation}
The relation $\Delta_xm_y = m\Delta^2_{\Delta(x)}i_y$ for any $x,y$ is equivalent to the relation $\Delta_xm = m\Delta^2_{\Delta(x)}$ for any $x$.
\end{Observation}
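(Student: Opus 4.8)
The plan is to exploit the single identity $m_y = m\circ i_y$ built into the definitions in \eqref{def:mxdeltax}, which shows that the one-variable relation $\Delta_x m_y = m\Delta^2_{\Delta(x)} i_y$ is nothing but the two-variable relation $\Delta_x m = m\Delta^2_{\Delta(x)}$ of linear maps $A\otimes A\to A$ after precomposition with the slice map $i_y\colon A\to A\otimes A$, $z\mapsto y\otimes z$. Thus the whole statement reduces to the elementary principle that a linear map out of $A\otimes A$ is pinned down by its restrictions to the slices $y\otimes A$ as $y$ varies.

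First I would dispatch the direction $\big(\Delta_x m = m\Delta^2_{\Delta(x)}\ \forall x\big)\Rightarrow\big(\Delta_x m_y = m\Delta^2_{\Delta(x)} i_y\ \forall x,y\big)$. Fixing $x$ and precomposing the assumed equality with $i_y$ gives $\Delta_x m\,i_y = m\Delta^2_{\Delta(x)} i_y$; since $m_y = m\,i_y$ the left-hand side is exactly $\Delta_x m_y$, which is the desired relation, valid for every $y$ with no further input.

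The direction carrying the actual content is the converse. Here I would use that every element of $A\otimes A$ is a finite sum of simple tensors $y\otimes z = i_y(z)$, so the images of the family $\{i_y\}_{y\in A}$ jointly span $A\otimes A$. Concretely, writing an arbitrary $w = \sum_k y_k\otimes z_k$ and using linearity of both $\Delta_x m$ and $m\Delta^2_{\Delta(x)}$,
\[
\Delta_x m(w) = \sum_k \Delta_x m_{y_k}(z_k) = \sum_k m\Delta^2_{\Delta(x)} i_{y_k}(z_k) = m\Delta^2_{\Delta(x)}(w),
\]
where the middle step applies the hypothesis slicewise. As $w$ was arbitrary, this is the equality $\Delta_x m = m\Delta^2_{\Delta(x)}$ on all of $A\otimes A$.

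The only point deserving a remark is that $\Delta^2_{\Delta(x)}$ is genuinely a well-defined linear endomorphism of $A\otimes A$ — it is $\sum \Delta_{x_{(1)}}\otimes \Delta_{x_{(2)}}$ in Sweedler notation, a finite sum of tensor products of the operators $\Delta_{x_{(i)}}$ — so that the displayed manipulations are legitimate. I do not anticipate a genuine obstacle: the equivalence is a reformulation that trades a two-parameter family of single-variable identities for one identity of two-variable operators, and all the work is the slicewise-determination observation.
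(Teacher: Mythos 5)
Your proof is correct, and it is exactly the argument the paper leaves implicit: the Observation is stated without proof, since unwinding $m_y = m\circ i_y$ and noting that the simple tensors $i_y(z)=y\otimes z$ span $A\otimes A$ is precisely the intended (and only) content. Both directions and the well-definedness remark about $\Delta^2_{\Delta(x)}$ are handled properly, so there is nothing to add.
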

We will call this relation the \emph{Heisenberg relation} throughout this article. The reformulation above a first step towards the construction of the categorification of Heisenberg algebra from the SSH structure.

The above construction can be easily generalized to give the following general statement:
\begin{Proposition}
\label{prop:heisenbergaction}
Giving an action of $\Heis(A)$ on a space $V$ is the same as giving a morphism of spaces $A\otimes A\xrightarrow{a}\End(V)$ which satisfies
\begin{enumerate} 
\item The restrictions of $a$ to $A\otimes 1$ and $1\otimes A$ are morphisms of algebras.
\item  Denote \begin{gather*}
\Delta^2_a:A\otimes A\rightarrow\End(A\otimes V) \\
\Delta^2_a(x\otimes y)(z\otimes v):=\Delta_x(z)\otimes (a(1\otimes y)(v))\\
m_a:A\otimes V\rightarrow V \\
m_a(z\otimes v):=a(z\otimes 1)(v)
\end{gather*}
then for any $x\in A$ we have $a(1\otimes x)\circ m_a=m_a\circ \Delta^2_a(\Delta(x))$
\end{enumerate}
\end{Proposition}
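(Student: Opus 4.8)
The plan is to peel ``action of $\Heis(A)$ on $V$'' down to its literal meaning and then match the multiplication of $\Heis(A)$ term by term against conditions (1) and (2). Recall from \autoref{prop:heisdouble} that $\varphi$ is injective with image the subalgebra of $\End_\ZZ(A)$ generated by all $m_x$ and all $\Delta_y$, so that $\Heis(A)$ is $A\otimes A$ with the product transported along $\varphi(x\otimes y)=m_x\Delta_y$; in particular every element has a normal form $\sum_i m_{x_i}\Delta_{y_i}$ and $x\otimes y=(x\otimes1)\cdot(1\otimes y)$ in $\Heis(A)$. An action on $V$ is by definition an algebra homomorphism $\rho:\Heis(A)\to\End(V)$, and such a $\rho$ is determined by its two restrictions through $\rho(x\otimes y)=\rho(x\otimes1)\,\rho(1\otimes y)$. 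First I would unwind the notation: spelling out $m_a$ and $\Delta^2_a$ on generators shows that condition (1) says exactly that $x\mapsto a(x\otimes1)$ and $y\mapsto a(1\otimes y)$ are algebra maps out of $A$, while condition (2), evaluated on $z\otimes v$, becomes the operator identity $a(1\otimes x)\,a(z\otimes1)=a(\Delta_{x_{(1)}}(z)\otimes1)\,a(1\otimes x_{(2)})$ --- which, using cocommutativity of $A$, is precisely the image under $a$ of the straightening relation $\Delta_xm_z=m_{\Delta_{x_{(2)}}(z)}\Delta_{x_{(1)}}$ established in the proof of \autoref{prop:heisdouble}.

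For the forward direction I would simply transport the relations valid inside $\Heis(A)$ through $\rho$: applying $\rho$ to \eqref{ref:heisenbergrelations} yields (1), and applying it to \eqref{eqn:deltam} in the explicit form above yields (2).

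For the converse I would reconstruct the action from $a$ by normal ordering and check multiplicativity directly. The product of two normal-ordered elements $(m_x\Delta_y)(m_{x'}\Delta_{y'})$ is evaluated by reordering the middle factor $\Delta_y m_{x'}$ into normal form via \eqref{eqn:deltam} and then collapsing the two outer $m$'s and two inner $\Delta$'s using the subalgebra relations. Condition (2) is exactly what lets $a$ reproduce the reordering step, and condition (1) is exactly what lets it reproduce the two collapses, so $a$ respects the Heisenberg-double product on all such products, hence on all of $\Heis(A)$.

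The step I expect to be the main obstacle is the sufficiency in the converse: one must know that (1) and (2) constitute a \emph{complete} set of relations, so that straightening an arbitrary word by repeated application of (2) is consistent and independent of the order in which reductions are performed. Rather than verify a confluence/diamond condition by hand, I would lean on the fact --- already packaged into \autoref{prop:heisdouble} --- that $\Heis(A)$ is a genuine associative algebra with the normal form $m_x\Delta_y$; this guarantees that every identity among the generators $m_x,\Delta_y$ in $\End(A)$ is a formal consequence of the subalgebra relations and \eqref{eqn:deltam}, so that a linear map honoring those generating relations automatically honors all of them and no hidden coherence survives.
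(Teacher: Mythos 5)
Your proposal is correct and follows exactly the route the paper intends: the paper gives no explicit proof of this proposition, remarking only that the construction of \autoref{prop:heisdouble} ``can be easily generalized,'' and your argument is precisely that generalization (push the relations \eqref{ref:heisenbergrelations}--\eqref{eqn:deltam} through $\rho$ for necessity; for sufficiency, set $\rho(x\otimes y)=a(x\otimes 1)\,a(1\otimes y)$ and check multiplicativity on elementary tensors via one straightening step plus the two collapses). The ``main obstacle'' in your final paragraph is in fact a non-issue: every element of $\Heis(A)=A\otimes A$ is a sum of normal-ordered elementary tensors, the product of two such is computed by a \emph{single} application of \eqref{eqn:deltam} followed by the subalgebra relations, and bilinearity of the product then extends multiplicativity from this case to all of $\Heis(A)$, so no confluence or completeness-of-relations argument (and no presentation of $\Heis(A)$ by generators and relations, which \autoref{prop:heisdouble} does not actually provide) is needed.
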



\subsection{The infinite-dimensional Heisenberg algebra}
\label{ssec:generators}
The classical one-variable Heisenberg algebra is the $\ZZ$-algebra with two generators $p,q$ and one defining relation $[p,q]=1$. The infinite version of this algebra is usually defined in terms of infinite number of generators and relations. There are several different versions used in different settings, some of which are described below. All of these are (sometimes non-isomorphic) $\ZZ$ forms of the same complex algebra.

We use the notion of of Heisenberg double for a positive self-adjoint Hopf algebra which we explored in the previous section to give an alternative description of the infinite-dimensional Heisenberg algebra. This description doesn't use the language of generators and relations, and thus lends itself more naturally to categorification.

Denote by $\Lambda$ the algebra of symmetric polynomials in a countable number of variables over $\ZZ$. $\Lambda$ has the structure of a PSH algebra. The $\ZZ$ basis is given by the Schur polynomials; this also defines an inner product. The multiplication map $m$ is given by the multiplication of polynomials.

\begin{Definition}
We define the Heisenberg algebra of infinite rank to be $\Heis(\Lambda)$, i.e. the the Heisenberg double corresponding to the pair $(\Lambda,\Lambda)$.
\end{Definition}

Let us describe how some of the commonly used definitions of Heisenberg algebra arise from the above definition:
\begin{enumerate}
\item A $\ZZ$-algebra with generators $p_n,q_n,n\in\NN$ and relations\begin{itemize}
\item 
$[p_m,p_n]=[q_m,q_n]=0$
\item
$[p_m,q_n]=\delta_{mn}1$
\end{itemize} 
This corresponds to taking $p_n$ to be the elementary symmetric function of degree $n$ ($\sum_{i_1<\dots<i_n} x_{i_1}\cdots x_{i_n}$) in the left $\Lambda$ and the $q_n$ to be the primitive symmetric function of degree $n$ (described in \cite{Zelbook}) in the right $\Lambda$.
\item A $\ZZ$-algebra with generators $c_k,k\in\ZZ$ and relations $[c_k,c_l]=k\delta_{k+l,0}$. This corresponds to taking $c_k$ with positive $k$ to be the primitive symmetric function
of degree $k$ in the right $\Lambda$ and $c_k$ with negative $k$ to be the primitive symmetric
function of degree $k$ in the left $\Lambda$, and taking $c_0=1$. 
\item (The algebra which Khovanov categorifies in \cite{Khov})  Generators $a_n,b_n$, $n\in\NN$ with relations\begin{itemize}
\item
$a_0=b_0=1$
\item 
$[a_m,a_n]=[b_m,b_n]=0$
\item
$[a_m,b_n]=b_{n-1}a_{m-1}$
\end{itemize} 
This corresponds to taking $a_n$ to be the elementary symmetric function
of degree $n$ in the left $\Lambda$ and the $b_n$ to be the whole symmetric
function of degree $n$ ($\sum_{i_1\leq\dots\leq i_n} x_{i_1}\cdots x_{i_n}$) in the right $\Lambda$.
\end{enumerate}

\section{A categorical Heisenberg double}
\label{sec:CategoricalHeisenberg}
\label{sec:heisenbergaction}
\label{ssec:heisenbergcategorification}
\subsection{The statement}
Our starting point in this section is the attempt to construct a categorification of the Fock space representation of the Heisenberg double discussed in \autoref{sec:algebraFock}. There we described how this representation is constructed using the PSH algebra structure on $A$. Presently from a self-adjoint Hopf category structure on a category $\CCC$ we want to construct a categorical action on $\CCC$ which descends to the Fock space representation of $\Heis(K(\CCC))$.

Denote the left and right adjoints of the functor $m:\CS{\fset{2}}\rightarrow \CCC$ by $\Delta^l$ and $\Delta^r$. 
For the straightforward categorification of \autoref{prop:heisenbergaction} in the case of the Fock space action we need to construct the following for the SSH category $\CCC$:\\
\begin{enumerate}
\label{list:categoricalFock}
\item Endofunctors $m_F,\Delta^r_F$ of $\CCC$, for any $F\in\CCC$, and endofunctors $(\Delta^r)^2_\Phi$ of $\CS{[2]}$ for any $\Phi\in\CS{[2]}$.
\item Isomorphisms $m_Fm_G\cong m_{F\otimes G}$ and $\Delta^r_G\Delta^r_F\cong \Delta^r_{F\otimes G}$.
\item Isomorphisms $\Delta^r_F m\cong m {(\Delta^r)^2}_{\Delta^l(F)}$
\end{enumerate}

\begin{Remark}
\label{rem:leftright}
The construction of the isomorphism $3$ as outlined in the proof of Theorem \ref{th:deltam} below forces us to use both the left and right adjoint of the multiplication, although the roles may be reversed. Briefly, this happens because we use the unit of \emph{left} adjunction to construct a certain canonical square of functors which then has a commutative \emph{right} mate. This is evocative of the construction in \cite{savageyacobi} where the authors construct lifts of the Heisenberg relations in several examples where the left and right adjoints are not isomorphic.
\end{Remark}

In the view of the \autoref{prop:heisenbergaction} this would give us

\begin{Theorem}
\label{Th:HeisCat}
Let $\CCC$ be an SSH category and denote by $\EndAd(\CCC)$ the category of endofunctors of $\CCC$ admitting adjoints.\\
The functor $\CCC\otimes\CCC^{op}\rightarrow \EndAd(\CCC)$ given by $F\boxtimes G\mapsto m_F\circ \Delta^r_G$ can be naturally constructed using the SSH structure on $\CCC$ and descends to the Fock space representation of the Heisenberg double $\Heis(K(\CCC))$.
\end{Theorem}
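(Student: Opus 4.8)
The plan is to reduce the theorem to the linear-algebraic criterion of \autoref{prop:heisenbergaction} by producing the three pieces of data listed in \eqref{list:categoricalFock}, assembling them into an honest functor, and then reading off the Fock space representation after passing to Grothendieck groups. Every piece of data will be built directly from the functor $m\colon\CS{\fset{2}}\to\CCC$ supplied by the SSH structure together with its left and right adjoints $\Delta^l,\Delta^r$, so that the construction is manifestly natural in the SSH structure.

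First I would construct the operators. For $F\in\CCC$ let $i_F\colon\CCC\to\CS{\fset{2}}$ be the functor $X\mapsto F\boxtimes X$ and set $m_F:=m\circ i_F$; this is the categorical multiplication-by-$F$ operator, and it lands in $\EndAd(\CCC)$ since $i_F$ and $m$ both admit adjoints. I would then define $\Delta^r_G:=(m_G)^r=(i_G)^r\circ\Delta^r$ to be the right adjoint of $m_G$, and $(\Delta^r)^2_\Phi$ analogously on $\CS{\fset{2}}$. The assignment $F\mapsto m_F$ is covariant and exact in $F$, while $G\mapsto\Delta^r_G$ is contravariant and exact in $G$ by the mate calculus (a morphism $G\to G'$ induces $m_G\to m_{G'}$, hence $\Delta^r_{G'}\to\Delta^r_G$). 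Because $\CCC$ is semisimple, all functors in sight are exact, so $(F,G)\mapsto m_F\circ\Delta^r_G$ is bi-exact and the universal property of the Deligne tensor promotes it to a functor $\CCC\otimes\CCC^{op}\to\EndAd(\CCC)$.

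Next I would produce the structural isomorphisms. The isomorphisms $m_Fm_G\cong m_{F\otimes G}$ are the associativity constraints of the monoidal structure of \autoref{prop:symmonstruct}, and taking right adjoints of these yields $\Delta^r_G\Delta^r_F\cong\Delta^r_{F\otimes G}$ directly; this is data (1) and (2) of \eqref{list:categoricalFock}. The remaining isomorphism $\Delta^r_F m\cong m\,(\Delta^r)^2_{\Delta^l(F)}$ is data (3) and the genuine content of the theorem: it is precisely the $2$-isomorphism of \autoref{th:deltam}, obtained as the right mate of the canonical square assembled from the unit of the adjunction $(\Delta^l\dashv m)$, which is invertible exactly because the relevant square in the SSH structure satisfies the Beck-Chevalley condition. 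I expect this to be the main obstacle, and it is isolated into \autoref{th:deltam}; the delicate feature (cf.\ \autoref{rem:leftright}) is that the \emph{left} adjoint appearing in $\Delta^l(F)$ must be used to build the square whose \emph{right} mate gives the desired invertible $2$-morphism, so both adjoints are genuinely needed.

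Finally I would verify the descent. Passing to $K$-groups and using $K(\CCC^{op})\cong K(\CCC)=:A$ turns the functor into a linear map $A\otimes A\to\End(A)$ sending $[F]\otimes[G]\mapsto m_{[F]}\circ\Delta_{[G]}$: in the semisimple setting $m_F$ descends to multiplication by $[F]$, and the right adjoint $\Delta^r_G=(m_G)^r$ descends to the inner-product adjoint of multiplication by $[G]$, namely $\Delta_{[G]}$; note in particular that $\Delta^l$ and $\Delta^r$ induce the \emph{same} map $\Delta$ on $K$-groups. Under this dictionary the isomorphisms of data (2) become condition (1) of \autoref{prop:heisenbergaction} (both restrictions are algebra maps), and the isomorphism of data (3) becomes exactly the Heisenberg relation \eqref{eqn:deltam}, i.e.\ condition (2) of \autoref{prop:heisenbergaction}. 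Here the descent requires only that these isomorphisms \emph{exist}, since \autoref{prop:heisenbergaction} is a statement about the underlying linear maps, where they become equalities. The criterion of \autoref{prop:heisenbergaction} then identifies this map with the Fock space action of $\Heis(A)=\Heis(K(\CCC))$, which is the assertion.
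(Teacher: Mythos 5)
Your proposal is correct and follows essentially the same route as the paper: defining $m_F=m\circ i_F$ and $\Delta^r_F$ as its right adjoint via the Deligne-tensor insertion $i_F$, obtaining data (1) and (2) of \eqref{list:categoricalFock} from the $\FinDisj$-diagrams and adjunction, isolating the Heisenberg relation into \autoref{th:deltam} as the right mate of the square built from the unit of $(\Delta^l\dashv m)$ whose invertibility is the Beck-Chevalley property, and then descending to $K$-groups and invoking \autoref{prop:heisenbergaction}. The only cosmetic difference is that you cite the monoidal associativity constraint where the paper writes out the commutative diagrams in $\FinDisj$ explicitly, which amounts to the same construction.
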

In the following section we will use the SSH structure on $\CCC$ to construct the adjoint functors $i_F,j^r_F,j^l_F$ and define $m_{F}=m\circ i_F, \Delta^r_F=j^r_F\circ\Delta^r, \Delta^l_F=j^l_F\circ\Delta^l$, and similarly for the extensions to $\CS{[2]}$. We will then show that the $2$-morphisms in question are constructed naturally from the SSH structure. Their invertibility is a continuity statement (see \autoref{prop:bccart}), which follows from the continuity of the SSH functor (or, equivalently, from the fact that certain squares satisfy the Beck-Chevalley condition).

In \autoref{ssec:FockP} we will apply our general construction to the case of the SSH category $\PP$ of polynomial functors and explicitly describe the $1$- and $2$-morphisms given by it in this example. 


\subsection{Constructions}
\label{ssec:CatHeisenbergConstructions}
Consider the functor $\boxtimes:\CCC^{\times 2}\rightarrow\CS{\fset{2}}$ that is the image under $\Hop$ of the coCartesian arrow $(\{1\},\{2\}) \cartesianarrow \{1,2\}$. We define for any object $F\in\CCC$, the functor $i_F$ as
\begin{align*}
i_F:\CCC & \rightarrow\CS{\fset{2}}, & i_F(X)=F\boxtimes X
\end{align*}
Note that by our requirements $i_F$ has left and right adjoints which we denote by $j^l_F,j^r_F$
\[
j^l_F,j^r_F: \CS{\fset{2}}\rightarrow \CCC
\]
We define $m_F,\Delta^l_F,\Delta^r_F$ by
\begin{align*}
m_{F}&=m\circ i_F & \Delta^r_F&=j^r_F\circ\Delta^r & \Delta^l_F&=j^l_F\circ\Delta^l 
\end{align*} 
$\Delta^r_F, \Delta^l_F$ are respectively right and left adjoint to $m_F$ for any $F \in \CCC$. 

The construction of first two isomorphisms in \eqref{list:categoricalFock} is fairly straightforward. To construct the isomorphism $m_Fm_G\cong m_{F\otimes G}$ we first note that the functor $m_{F\otimes G}$ is canonically isomorphic to the functor $m_{m_F G}$ - the isomorphism $F\otimes G \cong m_F G$ is a part of the self-adjoint Hopf structure on $\CCC$ as explained in \autoref{sec:SSHDef}. The functors $m_Fm_G$ and $m_{m_F G}$ are the images of the following morphisms in $\FinDisj$ under $\Hop$:

\begin{align*}
m_Fm_GH: & {} & \nnstik{1}{
(F) \fset{1} \ar{r} \& \fset{1} \ar{r} \& \fset{1} \ar{dr} \& \\
(G) \fset{1} \ar{dr} \& \& \& \fset{2}\ar{r} \& \fset{1} \\
\& \fset{2} \ar{r} \& \fset{1} \ar{ur} \\
(H) \fset{1} \ar{ur}
}\\
m_{m_F G}H: & {} & \nnstik{1}{
(F) \fset{1} \ar{dr} \\
\& \fset{2} \ar{r} \& \fset{1} \ar{dr} \\
(G) \fset{1} \ar{ur} \& \& \& \fset{2}\ar{r} \& \fset{1} \\
(H) \fset{1} \ar{r} \& \fset{1} \ar{r} \& \fset{1} \ar{ur}
}
\end{align*}
These morphisms form a commutative diagram in $\FinDisj$, hence we have an isomorphism $m_Fm_G \cong m_{m_F G}$ whenever we have a self-adjoint Hopf structure on $\CCC$.

Passing to adjoints we get an isomorphism
\[
\Delta^r_{F\otimes G}\cong \Delta^r_G\Delta^r_F
\]
It remains to give a categorical analog of the relation $\Delta_x m= m {(\Delta)^2}_{\Delta(x)}$. 
Let $\bar{c}$ be the arrow $(\fset{2},\fset{2})\rightarrow(\fset{4})$ in $\FinDisj$ given by the maps $\mapstack{1\mapsto 1}{2\mapsto 3},\mapstack{1\mapsto 2}{2\mapsto 4}$. For $\Phi\in\CS{\fset{2}}$ we define the functor $i_\Phi:\CS{\fset{2}}\rightarrow \CS{\fset{4}}$ by
\[i_{\Phi}(X)=\Hop(\bar{c})(\Phi,X)\]
Define adjoint functors $m^2_\Phi\dashv \Delta^2_\Phi$ as follows:
\begin{align*}
m^2_\Phi&=m^2\circ i_\Phi\\
(\Delta^r)^2_\Phi &=j^r_\Phi\circ(\Delta^r)^2
\end{align*}
then in this notation we have 

\begin{Theorem}
\label{th:deltam}
There is a canonical isomorphism 
\begin{equation}
\label{deltam}
\Delta^r_F m\cong m\circ (\Delta^r)^2_{\Delta^l(F)}
\end{equation}
coming from the SSH structure on $\CCC$.
\end{Theorem}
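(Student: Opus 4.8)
The plan is to realize the isomorphism \eqref{deltam} as the right mate of a single canonical square of endofunctors, whose filler is built from the unit of the adjunction $\Delta^l\dashv m$ and which satisfies the Beck-Chevalley condition because it is the $\Hop$-image of a \comma square. By \autoref{prop:bccart}, invertibility of the mate of such a square is automatic, so the whole of the work is in producing the square and certifying that it is \comma.

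First I would assemble the square. Since $m_F=m\circ i_F$ and $m^2_{\Delta^l(F)}=m^2\circ i_{\Delta^l(F)}$, the functors $m_F\circ m$ and $m\circ m^2_{\Delta^l(F)}$ are both endofunctors $\CS{\fset{2}}\to\CCC$; on an object $Y_1\boxtimes Y_2$ the first returns $F\otimes Y_1\otimes Y_2$ while the second returns $(m\Delta^l(F))\otimes Y_1\otimes Y_2$. Hence the unit $\eta\colon\Id_\CCC\to m\Delta^l$ of the adjunction $\Delta^l\dashv m$, evaluated at $F$, induces a canonical (and, crucially, \emph{non-invertible}) $2$-morphism
\[
\sigma\colon m_F\circ m\Longrightarrow m\circ m^2_{\Delta^l(F)}
\]
filling the square with vertical arrows $m^2_{\Delta^l(F)}$ (left) and $m_F$ (right) and horizontal arrows $m$. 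Taking the right mate of this square with respect to the vertical arrows and their right adjoints $(\Delta^r)^2_{\Delta^l(F)}=(m^2_{\Delta^l(F)})^R$ and $\Delta^r_F=(m_F)^R$ produces exactly a $2$-morphism $m\circ(\Delta^r)^2_{\Delta^l(F)}\Rightarrow\Delta^r_F\circ m$, so it remains only to show this mate is invertible.

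The heart of the argument is that the square carrying $\sigma$ satisfies the Beck-Chevalley condition, equivalently (by \autoref{prop:bccart}) that it is an \comma square in $\CatAd$. The strategy is to identify it with the $\Hop$-image of an \comma square. Note that this square does \emph{not} descend from a commutative square of finite sets: on $K$-groups it becomes the non-commutative diagram comparing $x\cdot(yz)$ with $(m\Delta x)\cdot(yz)$, the discrepancy being precisely $\eta$. The identification must therefore be performed in the Heisenberg-double category $\Heis{\Hop}$, where one pastes $\Hop$ of the Cartesian square underlying the Hopf axiom (\autoref{frenkelcranehopfcat}, \autoref{SSH-kgroup}) with the adjunction unit that installs the extra slot for $F$ through $\Delta^l(F)$. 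Granting that this pasting computes as an \comma square (the input drawn from \S\ref{heuristics}), continuity of $\Hop$ forces $\sigma$ to be a Beck-Chevalley $2$-cell, and hence its right mate is invertible.

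The main obstacle is exactly this Beck-Chevalley verification. Unlike the invertible fillers appearing in \autoref{frenkelcranehopfcat} or \autoref{SSH-kgroup}, here the filler is genuinely non-invertible, so one cannot simply invoke that $\Hop$ carries a commutative square of sets to an invertible $2$-cell; instead one must exhibit $\sigma$ as a mate of the Hopf-axiom square composed with $\eta$ and confirm that the resulting composite square is still \comma. This is where both the left adjoint $\Delta^l$ (entering through $\eta$ and through the object $\Delta^l(F)$) and the right adjoints (entering through the mate) are genuinely needed, as anticipated in \autoref{rem:leftright}. Once invertibility is secured, naturality of $\sigma$ in $F$ together with the functoriality of the mate construction upgrades the pointwise isomorphism to the canonical isomorphism \eqref{deltam}.
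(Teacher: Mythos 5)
There is a genuine gap. Your setup is the same as the paper's: you form the square \eqref{heisquare} with verticals $m^2_{\Delta^l(F)}$ and $m_F$, horizontals $m$, and filler induced by the unit $\eta\colon F\to m\Delta^l(F)$ of $(\Delta^l\dashv m)$, and you correctly reduce the theorem to showing this square satisfies the Beck-Chevalley condition, so that its right mate is the desired isomorphism. But at exactly that point --- which you yourself call the heart of the argument --- you write ``granting that this pasting computes as an \comma square (the input drawn from \S\ref{heuristics})'' and invoke the categorical Heisenberg double $\Heis(\Hop)$. In the paper, the statements you are granting are \autoref{conj:activecomma} and \autoref{conj:heiscomma}: they are explicitly labelled as conjectures, are not proven, and the authors explicitly decline to use them in the proof (``Instead, we restrict ourselves to proving this for the specific square needed \dots in \autoref{lem:delignetensorbc}''). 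Indeed \S\ref{heuristics} is offered only as a heuristic for \emph{discovering} the right decomposition, not as a justification. So as written, your argument is conditional on unproven conjectures and is not a proof.

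What is missing is the unconditional Beck-Chevalley verification, which the paper carries out by decomposition. First, split \eqref{heisquare} into the square $\beta$ (verticals $i_{\Delta^l(F)}$, $i_F$; horizontals $m$, $\overline m$) pasted with the Hopf-axiom square (verticals $m^2$, $m$), the latter being the $\Hop$-image of a Cartesian square in the fiber over $\pset{1}$, hence Beck-Chevalley by the SSH axioms. Then decompose $\beta$ itself inside $\CatAdTen$ as in \eqref{bcdecomposition}: the top piece consists of a square filled by the unit $\eta$ (left Beck-Chevalley directly from the triangle identities of the adjunction, no appeal to $\Hop$ at all) together with a degenerate square; the bottom piece is the $\Hop$-image of a Cartesian square in $\FinDisj$ whose verticals are coCartesian arrows, i.e.\ Deligne-tensor presentations. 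The genuinely new content is \autoref{lem:delignetensorbc}, which proves that this last square satisfies the left Beck-Chevalley condition using the universal property of the Deligne tensor (unique filling of cubes and invertibility of the resulting mate). Left Beck-Chevalley squares compose, and since both mates of the composite are defined, left Beck-Chevalley implies right Beck-Chevalley, giving \eqref{deltam}. To repair your proposal you must replace the appeal to \S\ref{heuristics} by this lemma (or an equivalent unconditional argument); without it, the non-invertible filler $\sigma$ has no certified mate-invertibility, which is precisely the point of the theorem.
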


\begin{Corollary}
We have a canonical isomorphism \[
\Delta^r_Fm_G=\Delta^r_F m\circ i_G \cong m\circ (\Delta^r)^2_{\Delta(F)}\circ i_G
\]
\end{Corollary}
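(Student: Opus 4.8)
The plan is to obtain this statement immediately from \autoref{th:deltam} by \emph{right-whiskering} the canonical isomorphism produced there with the fixed functor $i_G$. All of the substantive work — exhibiting the relevant square as satisfying the Beck--Chevalley condition — has already been carried out in the proof of \autoref{th:deltam}, so what remains for the Corollary is purely formal $2$-categorical composition, and I expect no genuine obstacle.

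First I would unwind the definition of $m_G$. By construction $m_G=m\circ i_G$, and therefore
\[
\Delta^r_F m_G=\Delta^r_F\circ m\circ i_G=(\Delta^r_F\circ m)\circ i_G .
\]
\autoref{th:deltam} supplies a canonical $2$-isomorphism $\Delta^r_F m\xrightarrow{\ \sim\ } m\circ(\Delta^r)^2_{\Delta^l(F)}$ of functors $\CS{\fset{2}}\to\CCC$. Since $i_G:\CCC\to\CS{\fset{2}}$, both of these functors may be precomposed with $i_G$ to give functors $\CCC\to\CCC$, matching the two sides of the asserted isomorphism once one reads the $\Delta(F)$ of the statement as $\Delta^l(F)$.

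Next I would whisker the isomorphism of \autoref{th:deltam} on the right by $i_G$, i.e. horizontally compose it with the identity $2$-cell $\Id_{i_G}$. Because whiskering an invertible $2$-morphism by a $1$-morphism again yields an invertible $2$-morphism (this is formal in any bicategory), we obtain a canonical isomorphism
\[
\Delta^r_F m\circ i_G\ \cong\ m\circ(\Delta^r)^2_{\Delta^l(F)}\circ i_G ,
\]
which is exactly the claimed identity $\Delta^r_F m_G\cong m\circ(\Delta^r)^2_{\Delta(F)}\circ i_G$.

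For canonicity there is nothing further to arrange: the functor $i_G$ is determined by $G$ via the coCartesian arrow used to define $i_{(-)}$, and the isomorphism of \autoref{th:deltam} is already canonical, so its whiskering depends on no auxiliary choices and is natural in $G$. Consequently, unlike \autoref{th:deltam} itself, the Corollary requires no continuity (Beck--Chevalley) input and follows by precomposition alone.
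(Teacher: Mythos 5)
Your proposal is correct and matches the paper's (implicit) treatment: the paper states the Corollary without separate argument precisely because, with $m_G=m\circ i_G$ by definition, it follows from Theorem \ref{th:deltam} by whiskering the canonical isomorphism \eqref{deltam} on the right with $i_G$ (and $\Delta(F)$ there does mean $\Delta^l(F)$). Your observation that no further Beck--Chevalley input is needed is likewise exactly why the paper treats this as an immediate corollary.
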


\begin{proof}
Note first that the isomorphism \eqref{deltam} can be represented as a square

\begin{equation}
\label{heisbcsquare}
\tik
\CS{2} \arrow[]{r}[above]{m}  \arrow[shorten >=0.5cm,shorten <=0.5cm,Rightarrow]{dr}[above,sloped]{\sim} & \CCC   \\
\CS{2} \arrow[]{u}{(\Delta^r)^2_{\Delta^l(F)}} \arrow[]{r}[below]{m} & \CCC \arrow[]{u}[right]{\Delta^r_F}
\tak
\end{equation}

The form of this square suggests that we should try to construct it as the right mate of a square of the form
\begin{equation}
\label{heisquare}
\tik
\CS{2} \arrow[]{d}[left]{m^2_{\Delta^l(F)}} \arrow[]{r}[above]{m} & \CCC \arrow[shorten >=0.4cm,shorten <=0.4cm,Rightarrow]{dl}[above,sloped]{\alpha}  \arrow[]{d}[right]{m_F}\\
\CS{2} \arrow[]{r}[below]{m}& \CCC 
\tak
\end{equation}
satisfying the Beck-Chevalley condition. Our objective is to construct such square from the SSH structure on $\CCC$. We start by rewriting it as a composition of squares:
\[
\tik
\CS{2} \arrow[]{d}[left]{i_{\Delta^l(F)}} \arrow[]{r}{m} & \CCC \arrow[shorten >=0.4cm,shorten <=0.4cm,Rightarrow]{dl}[above,sloped]{\beta}  \arrow[]{d}{i_F}\\
\CS{4} \arrow[]{r}{\overline m} \arrow[]{d}[left]{m^2}& \CS{2} \arrow[]{d}{m}\arrow[shorten >=0.4cm,shorten <=0.4cm,Rightarrow]{dl}[above,sloped]{\sim}\\
\CS{2} \arrow[]{r}{m} & \CCC
\tak
\]
where $\overline m$ corresponds to the maps of sets $\mapstack{1\mapsto 1}{3\mapsto 1},\mapstack{2\mapsto 2}{4\mapsto 2}$ and the lower square is the image of a Cartesian square under the SSH functor. The right mate of the lower square is therefore invertible, and hence it is enough to construct a square $\beta$ satisfying the Beck-Chevalley condition. 

The square $\beta$ can be obtained as a composition in $\CatAdTen$:
\begin{equation}
\label{bcdecomposition}
\stik{1}{
(\CCC^{\otimes 2})\ar{d}[left]{(\Delta^l(F),\Id)} \ar{r}{m} \& (\CCC) \ar{d}{(F,\Id)} \arrow[shorten >=0.4cm,shorten <=0.4cm,Rightarrow]{dl}[above,sloped]{(\eta,\Id)}\\
(\CCC^{\otimes 2},\CCC^{\otimes 2}) \ar{d}[left]{\Hop(\bar{c})} \ar{r}{(m,m)} \& (\CCC,\CCC) \ar{d}{\Hop(c)} \arrow[shorten >=0.4cm,shorten <=0.4cm,Rightarrow]{dl}[above,sloped]{\sim}\\
(\CCC^{\otimes 4}) \ar{r}{\overline{m}}  \& (\CCC^{\otimes 2})
}
\end{equation}

We will now prove that each of these squares satisfies the Beck-Chevalley condition, and hence also their composition does. In \autoref{heuristics} we explain how the decomposition of $\beta$ above can be obtained using the equivalence of Beck-Chevalley squares and \comma squares in $\CatAd$ from \autoref{sec:2Cartesian}.

The top square is given by two squares (as described in  \autoref{def:CatAdTen}):
\begin{align*}
\stik{1}{
\Vect\ar{d}[left]{\Delta^l(F)} \ar[equal]{r} \& \Vect \ar{d}{F} \arrow[shorten >=0.4cm,shorten <=0.4cm,Rightarrow]{dl}[above,sloped]{\eta}\\
\CCC^{\otimes 2} \ar{r}{m} \& \CCC}
&\stik{1}{}
&
\stik{1}{\CCC^{\otimes 2}\ar[equal]{d} \ar{r}{m} \& \CCC \ar[equal]{d} \arrow[shorten >=0.4cm,shorten <=0.4cm,equal]{dl}\\
\CCC^{\otimes 2} \ar{r}{m} \& \CCC }
\end{align*}

where $\eta:F\rightarrow m\Delta^l(F)$ comes from the unit of the adjunction.

Both of these squares satisfy the left Beck-Chevalley condition. For the left square this follows from the definition of adjunction of functors, and the right square is degenerate.

The bottom square in \eqref{bcdecomposition} is the image of the Cartesian square in $\FinDisj$
\[
\stik{1}{
(\fset{2},\fset{2}) \ar{d}[left]{(\bar{c})} \ar{r}{(m,m)} \& (\fset{1},\fset{1}) \ar{d}{(c)}\\
(\fset{4}) \ar{r}{\overline{m}}  \& (\fset{2})
}
\]

under $\Hop$, where $c$ is the pair of maps $1\mapsto 1,1\mapsto 2$ and $\bar{c}$ is computed accordingly as the pullback.

We therefore have left to prove:

\begin{Lemma}
\label{lem:delignetensorbc}
The square
\[
\stik{1}{
(\fset{2},\fset{2}) \ar{d}[left]{(\bar{c})} \ar{r}{(m,m)} \& (\fset{1},\fset{1}) \ar{d}{(c)}\\
(\fset{4}) \ar{r}{\overline{m}}  \& (\fset{2})
}
\]
in $\FinDisj$ maps to a square in $\CatAdTen$ that satisfies the left Beck-Chevalley condition.
\end{Lemma}

\begin{proof}
Note that the verticals are maps presenting a set as a disjoint union of two sets, and hence are coCartesian arrows in $\FinDisj$. By assumption they must go to maps presenting the Deligne tensor, so it is enough for us to prove the following more general assertion:

Let 
\[
\stik{1}{
(A,B) \ar{d} \ar{r} \& (C,D) \ar{d} \ar[Rightarrow,shorten <=0.7em,shorten >=0.7em]{dl}[above,sloped]{\alpha} \\
(A\otimes B) \ar{r}  \& (C\otimes D)
}
\]
be a square in $\CatAdTen$ where the verticals are presentations of the respective Deligne tensors (and so we denote the target category as the tensor), and $\alpha$ is invertible, then it satisfies the left Beck-Chevalley condition.

The idea of the proof is that the square with $\alpha$ should be a coCartesian arrow in the category of arrows fibered over the category of arrows of $\FinSet_*$, and so its mate should be a coCartesian arrow in the opposite category. This is because taking the mate is in some sense an invertible operation. Then the invertibility of $\alpha$ implies the invertibility of the mate.

More concretely, consider an incomplete cube
\[
\stik{1}{
{} \& (A,B) \ar{rr}[name=topback,below]{} \ar{dd}[name=backleft,sloped,xshift=-1.5pc]{} \& {} \& (C,D) \ar{dd}\\
(A,B)\ar[crossing over]{rr}[name=topfront,above]{}\ar{dd}[name=frontleft]{} \ar[equals]{ur}[name=topleft,below]{} \& {} \& (C,D) \ar{dd}[name=frontright,sloped,xshift=1pc]{} \ar[equals]{ur}[name=topright,below]{} \& {}\\
{} \& (E) \ar{rr} \& {} \& (F)\\
(A\otimes B) \ar{rr}[name=bottomfront,below]{} \& {} \& (C\otimes D) 
\arrow[Rightarrow,to path={(frontright) to[bend right] (bottomfront)}, "\alpha"]{}{}
\arrow[equal,to path={(topright) to[bend left] (topback)}]{}{}
\arrow[Rightarrow,to path={(topback) to[bend left] (backleft)}, "\beta"]{}{}
\latearrow{/tikz/commutative diagrams/crossing over}{2-3}{4-3}{}
\latearrow{/tikz/commutative diagrams/crossing over,/tikz/commutative diagrams/equal}{2-3}{1-4}{}
}
\]
where the top is degenerate. We note that the property of the Deligne tensor allows us, first of all, to fill in the sides with an isomorphism, and given that, to fill in the bottom in a unique way. So we get a cube
\[
\stik{1}{
{} \& (A,B) \ar{rr}[name=topback,below]{} \ar{dd}[name=backleft,sloped,xshift=-1.5pc]{} \& {} \& (C,D) \ar{dd}\\
(A,B)\ar[crossing over]{rr}[name=topfront,above]{}\ar{dd}[name=frontleft]{} \ar[equals]{ur}[name=topleft,below]{} \& {} \& (C,D) \ar{dd}[name=frontright,sloped,xshift=1pc]{} \ar[equals]{ur}[name=topright,below]{} \& {}\\
{} \& (E) \ar{rr} \& {} \& (F)\\
(A\otimes B) \ar{ur}[name=bottomleft,sloped,yshift=-0.2pc]{} \ar{rr}[name=bottomfront,below]{} \& {} \& (C\otimes D) \ar{ur}
\arrow[equal,to path={(topright) to[bend left] (topback)}]{}{}
\arrow[Rightarrow,to path={(frontright) to[bend right] (bottomfront)}, "\alpha"]{}{}
\arrow[Rightarrow,crossing over,to path={(frontright) to[bend right] (topright)}]{}{}
\arrow[Rightarrow,to path={(bottomfront) to[bend right] (bottomleft)},"f"]{}{}
\arrow[Rightarrow,to path={(bottomleft) to[bend left] (backleft)}]{}{}
\arrow[Rightarrow,to path={(topback) to[bend left] (backleft)}, "\beta"]{}{}
\latearrow{/tikz/commutative diagrams/crossing over}{2-3}{4-3}{}
\latearrow{/tikz/commutative diagrams/crossing over}{2-1}{2-3}{}
\latearrow{/tikz/commutative diagrams/crossing over,/tikz/commutative diagrams/equal}{2-3}{1-4}{}
}
\]
Passing to the left mate, as described in \autoref{sec:BC}, we get a cube
\[
\stik{1}{
{} \& (A,B) \ar[equals]{rr}[name=topback,below]{} \ar{dd}[name=backleft,sloped,xshift=-1.5pc]{} \& {} \& (A,B) \ar{dd}\\
(C,D)\ar[equals,crossing over]{rr}[name=topfront,above]{}\ar{dd}[name=frontleft]{} \ar[dashed,red,thick]{ur}[name=topleft,below]{} \& {} \& (C,D) \ar{dd}[name=frontright,sloped,xshift=1pc]{} \ar[dashed,red,thick]{ur}[name=topright,below]{} \& {}\\
{} \& (A\otimes B) \ar{rr} \& {} \& (E)\\
(C\otimes D) \ar[dashed,red,thick]{ur}[name=bottomleft,sloped,yshift=-0.2pc]{} \ar{rr}[name=bottomfront,below]{} \& {} \& (F) \ar[dashed,red,thick]{ur}
\arrow[equal,to path={(topright) to[bend left] (topback)}]{}{}
\arrow[Rightarrow,to path={(frontright) to[bend right] (bottomfront)}]{}{}
\arrow[Rightarrow,red,crossing over,to path={(frontright) to[bend right] (topright)}, "\beta_L"]{}{}
\arrow[Rightarrow,red,to path={(bottomfront) to[bend right] (bottomleft)},"f_L"]{}{}
\arrow[Rightarrow,red,to path={(bottomleft) to[bend left] (backleft)}, "\alpha_L"]{}{}
\arrow[Rightarrow,to path={(topback) to[bend left] (backleft)}]{}{}
\latearrow{/tikz/commutative diagrams/crossing over}{2-3}{4-3}{}
\latearrow{/tikz/commutative diagrams/crossing over,/tikz/commutative diagrams/equal}{2-1}{2-3}{}
\latearrow{/tikz/commutative diagrams/crossing over,dashed,red,thick}{2-3}{1-4}{}
}
\]
Now, if the unit/counit data is fixed, the operations of left mate and right mate are inverse to each other, so we get that $\alpha_L$ has a property that (up to choosing isomorphisms coming from the Deligne tensor property for the front and back - noting also that these faces are not touched by the mate operations) any square
\[
\stik{1}{
(C,D) \ar{d} \ar{r} \& (A,B) \ar{d} \ar[Rightarrow,shorten <=0.7em,shorten >=0.7em]{dl}[above,sloped]{\gamma} \\
(F) \ar{r}  \& (E)
}
\]
factors uniquely through $\alpha_L$. It is now straitforward to see that this implies $\alpha_L$ is an isomorphism.
\end{proof}

We have proved that both squares and therefore their composition satisfies the left Beck-Chevalley condition. It follows (see \autoref{app:BeckChevalley}) the composition square also satisfies the right Beck-Chevalley condition since for this square both left and right mates are defined. The right Beck-Chevalley condition is what we need to obtain the isomorphism \eqref{deltam}.
\end{proof}


\subsection{The Heisenberg double category and a conjectural generalization of the Heisenberg relation}
\label{heuristics}
\begin{Note}
Throughout this section we will write $\Delta$ for $\Delta^l$ - the left adjoint of the multiplication.
\end{Note}

In this section we outline a more conceptual  approach for constructing and working with the Beck-Chevalley squares of the form

\begin{equation}
\stik{1}{
\label{originalheissquare}
\CS{2} \arrow[]{d}[left]{i_{\Delta(F)}} \arrow[]{r}[above]{m} \& \CCC \arrow[shorten >=0.4cm,shorten <=0.4cm,Rightarrow]{dl}[above,sloped]{\beta}  \arrow[]{d}[right]{i_F}\\
\CS{4} \arrow[]{r}[below]{\overline{m}} \& \CS{2} 
}
\end{equation}
in an SSH category $\CCC$. The key idea is to use the equivalence between Beck-Chevalley and \comma squares from \autoref{sec:2Cartesian} and think about the square $\beta$ as a pullback of its bottom right corner. This section describes the method for computing this pullback which in particular provides us with the decomposition \eqref{bcdecomposition} into Beck-Chevalley squares used in the previous section to construct a canonical 
lift of the Heisenberg double relation. Our hope is that the construction described in this section can be generalized and made precise in various contexts to produce new canonical categorifications and therefore is of independent interest. 

We start from noting that  \autoref{lem:delignetensorbc} makes it reasonable to expect that images under $\Hop$ of a certain class of Cartesian squares in $\FinDisj$ (not necessarily lying in the fiber over $\pset{1}$) are Beck-Chevalley squares. We would like to state this in the language of preserving \comma squares. We introduce a notion of \emph{active} square in $\FinDisj$: \begin{Definition}
A map $f:\pset{n}\rightarrow \pset{m}$ in $\FinSet_*$ is called \emph{active} if $f^{-1}\{*\}=\{*\}$.
\end{Definition}

Consider a square in $\FinDisj$:
\[
\stik{1}{
(A_i)_{i\in I} \ar{r}{\varphi} \ar{d}{\psi} \& (B_j)_{j\in J} \ar{d}{\tau} \\
(C_k)_{k\in K} \ar{r}{\nu} \& (D_l)_{l\in L}
}
\]
and suppose that it lies over a square of active maps in $\FinSet_*$. We say in this case that it is an \emph{active} square. The subcategory of active maps in $\FinSet_*$ is closed on pullbacks in $\FinSet_*$, so a Cartesian square of active maps is just a usual Cartesian square in the subcategory $(\FinSet_*)_{ac}\cong\FinSet$. The following gives an easy way of computing pullbacks of active maps.
\begin{Lemma} \label{lem:commafinset} 
An active square is Cartesian in $\FinDisj$, iff the square it lies over, \stik{0.7}{\pset{I}\ar{r}\ar{d} \& \pset{J}\ar{d}\\
\pset{K}\ar{r} \& \pset{L}}, is Cartesian in $\FinSet_*$, and all the squares of sets involved in it are Cartesian in $\FinSet$.
\end{Lemma}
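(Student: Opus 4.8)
The plan is to exploit the description of $\FinDisj$ from \autoref{rem:findisjalt} as the full subcategory of the arrow category $\Fun([1],\FinSet_*)$ spanned by the active arrows, with $p$ given by evaluation at the target. Under this identification an object $(S_i)_{i\in I}$ becomes an active arrow $\widetilde S\to I$ whose fibre over $i$ is $S_i$, where $\widetilde S=\bigsqcup_i S_i$, and the given active square unfolds into a commutative cube in $\FinSet_*$: one face is the ``index'' square $I\to J$, $I\to K$, $J\to L$, $K\to L$ (the square of pointed sets our square lies over), the opposite face is the ``total-space'' square $\widetilde A\to\widetilde B$, $\widetilde A\to\widetilde C$, $\widetilde B\to\widetilde D$, $\widetilde C\to\widetilde D$ of disjoint unions, and the two faces are joined by the four active projection arrows. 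Since limits in a functor category are computed objectwise, a square in $\Fun([1],\FinSet_*)$ is Cartesian exactly when both its source (total-space) square and its target (index) square are Cartesian in $\FinSet_*$; I would first reduce the statement to this objectwise criterion.

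The step that makes this reduction legitimate, and which I expect to be the main point, is that $\FinDisj$ is only a \emph{full} subcategory of $\Fun([1],\FinSet_*)$, so a priori a pullback computed in $\FinDisj$ need not agree with the ambient one. I would resolve this by showing that the ambient pullback of a cospan of active arrows over active maps is again active, hence lies in $\FinDisj$ and therefore also computes the pullback there (the universal property transfers across a full inclusion once the candidate object is known to lie in the subcategory). Concretely, writing $\widetilde P=\widetilde B\times_{\widetilde D}\widetilde C$ and $Q=J\times_L K$ for the objectwise pullbacks, the comparison map $\widetilde P\to Q$ sends $(b,c)$ to $(\pi_B b,\pi_C c)$, where $\pi_B\colon\widetilde B\to J$ and $\pi_C\colon\widetilde C\to K$ are the (active) structure arrows; its fibre over the basepoint of $Q$ consists of pairs with $\pi_B b=*$ and $\pi_C c=*$, and activeness of $\pi_B,\pi_C$ forces $b=c=*$. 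Thus $\widetilde P\to Q$ is active, which is exactly what is needed.

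It then remains to match the two objectwise conditions with the two conditions in the statement. The target square being Cartesian in $\FinSet_*$ is literally the requirement that the square of pointed sets our square lies over be Cartesian. For the source square I would use activeness of the legs $\tau\colon J\to L$ and $\nu\colon K\to L$ (abusing notation for the underlying index maps) to decompose the pullback of total spaces fibrewise as $\widetilde B\times_{\widetilde D}\widetilde C=\bigsqcup_{\tau(j)=\nu(k)}B_j\times_{D_{\tau(j)}}C_k$, whose index set is $J\times_L K$. Assuming the target square Cartesian, so that $I\cong J\times_L K$ via $i\mapsto(\varphi(i),\psi(i))$ with $l=\tau(\varphi(i))=\nu(\psi(i))$, the source square is Cartesian in $\FinSet_*$ if and only if $A_i=B_{\varphi(i)}\times_{D_l}C_{\psi(i)}$ for every $i\in I$, i.e. exactly when each square of ordinary finite sets involved in the original square is Cartesian in $\FinSet$ (the basepoint component being trivial). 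Combining the three steps, the square is Cartesian in $\FinDisj$ iff both objectwise squares are Cartesian iff the index square is Cartesian in $\FinSet_*$ and every fibre square is Cartesian in $\FinSet$, which is the claim.
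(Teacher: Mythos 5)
Your proof is correct, but it follows a genuinely different route from the paper's. The paper's own proof is a terse, hands-on argument: it first reduces to the case where $L$ is a single point (an active square over $\pset{L}$ splits into independent squares over the points of $L$), then simply exhibits the explicit Cartesian square with the given bottom-right corner, namely the one with index set $K\times J$ and fibres $T_{k,j}=B_j\times_{D}C_k$ with the obvious projections, and declares that the lemma follows immediately. You instead work over general $L$, using the identification from \autoref{rem:findisjalt} of $\FinDisj$ with the full subcategory of the arrow category $\Fun([1],\FinSet_*)$ spanned by the active arrows, where pullbacks are computed objectwise. Your key extra step -- that the objectwise pullback $\widetilde B\times_{\widetilde D}\widetilde C\to J\times_L K$ is again active because the structure arrows $\pi_B,\pi_C$ of objects of $\FinDisj$ are active -- is exactly what is needed to conclude that this ambient pullback lies in the full subcategory and hence also computes the pullback there (with the converse direction of the transfer supplied by uniqueness of pullbacks, which you use implicitly and could state). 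What your approach buys is precisely the verification the paper waves away: the universal property of the candidate square comes for free from objectwise limits plus fullness, and the two distinct uses of activeness are cleanly separated -- activeness of the objects keeps the pullback inside $\FinDisj$, while activeness of the index maps $\tau,\nu$ yields the fibrewise decomposition $\widetilde B\times_{\widetilde D}\widetilde C=\bigsqcup_{\tau(j)=\nu(k)}B_j\times_{D_{\tau(j)}}C_k$ that matches the two objectwise conditions with the two conditions in the statement. The paper's argument is shorter and more concrete; yours avoids the reduction to one-point $L$, makes the ``follows immediately'' step explicit, and would adapt more readily to variants of $\FinDisj$ defined by other classes of arrows.
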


\begin{proof}
We may assume that $L$ has one point, and then we have an explicit construction for a Cartesian square with given bottom right corner. Namely, we consider sets $T_{k,j},k\in K, j\in J$ given by $T_{k,j}=B_j\times_D C_k$ with the obvious projections to $B_\bullet,C_\bullet$. The lemma then follows immediately.
\end{proof}

\begin{Conjecture} \label{prop:generalcomma} \label{conj:activecomma}
An SSH functor $\Hop$ takes active \comma squares in $\FinDisj$ to (necessarily active) \comma squares in $\CatAdTen$.
\end{Conjecture}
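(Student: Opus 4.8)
The plan is to reduce the general active \comma square to the two kinds of squares whose images we already control: Deligne-tensor presentation squares, handled by \autoref{lem:delignetensorbc}, and Cartesian squares living in the fiber over $\pset{1}$, handled by the defining axiom of an SSH functor (condition 3 of \autoref{maindef}). The model for this reduction is precisely the factorization \eqref{bcdecomposition} used in the proof of \autoref{th:deltam}, where a single active square over $\pset{2}\to\pset{1}$ was split vertically into a Deligne-tensor square on top and the image of a Cartesian square in the fiber over $\pset{1}$ on the bottom. The task is to carry out this factorization uniformly.

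First I would use \autoref{lem:commafinset} to normalize the square. An active \comma (equivalently Cartesian, since $\FinDisj$ is a $1$-category) square lies over a Cartesian square of active maps $\pset{I}\to\pset{J}$, $\pset{K}\to\pset{L}$ in $\FinSet_*$ and has all its fiber squares Cartesian in $\FinSet$. Since both the Beck-Chevalley condition and the notion of \comma square in $\CatAdTen$ are detected one component at a time over the target index set $\nobase{L}$ (a mate in $\CatAdTen$ is a collection of mates indexed by $\nobase{L}$, and it is invertible iff each component is), I would reduce to the case $|\nobase{L}|=1$, where \autoref{lem:commafinset} realizes every fiber set as an honest fibered product $B_j\times_D C_k$.

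Next, using that $\Hop$ is coCartesian, I would factor each of the four $1$-morphisms of the image square through its coCartesian part — the arrow presenting the relevant Deligne tensor, which lands over $\pset{1}$ — followed by a morphism in the fiber over $\pset{1}$. This is the coCartesian factorization attached to the fibration $\CatAdTen\to\FinSet_*$, and it exhibits the image square as a vertical--horizontal pasting of squares of the two controlled types: the coCartesian faces are Deligne-tensor squares, to which the general assertion proved inside \autoref{lem:delignetensorbc} applies, and the remaining inner faces live entirely over $\pset{1}$, i.e.\ in $\CatAd$, where they are images of Cartesian squares and so satisfy Beck-Chevalley by the SSH axiom together with \autoref{prop:bccart}. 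Since the Beck-Chevalley condition is closed under pasting of squares (composition of mates, as in \autoref{sec:BC}), the composite image square satisfies Beck-Chevalley, and hence — via the equivalence of \autoref{prop:bccart} transported to $\CatAdTen$ — is a \comma square.

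The hard part, and the reason the statement is posed as a conjecture rather than a proposition, is twofold. On the one hand, \autoref{prop:bccart} is established only for $\CatAd$; carrying the equivalence ``\comma $\iff$ Beck-Chevalley'' over to the bicategory $\CatAdTen$ fibered over $\FinSet_*$ requires re-running that argument for the multi-exact functors that constitute its $1$-morphisms and checking compatibility with the componentwise reduction above. On the other hand — and more seriously — making the coCartesian factorization \emph{coherent} as a statement about the pseudofunctor $\Hop$ is delicate: one must produce the decomposition canonically for an arbitrary active Cartesian square and verify that the pasted $2$-isomorphisms assemble without introducing uncontrolled coherence data. It is exactly this systematic bookkeeping, rather than any single Beck-Chevalley computation, that the present bicategorical framework does not obviously supply, and which the authors expect to become tractable only in a fully developed $(\infty,2)$-categorical setting (cf.\ \S\ref{rem:stable}).
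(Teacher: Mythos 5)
There is no proof in the paper to compare you against: this statement is deliberately left as \autoref{conj:activecomma}. The authors prove only the single instance needed for \autoref{th:deltam}, namely \autoref{lem:delignetensorbc}, and they state explicitly that a correct general proof should come from factoring $\Hop$ through a category of algebra objects in $\CatAd$, a framework they place outside the scope of the article. So the question is whether your sketch would actually settle the conjecture, and it would not, for a reason more basic than the two difficulties you flag at the end.

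Your plan runs through the Beck-Chevalley condition, but for a general active square that condition is not even defined in $\CatAdTen$. By \autoref{def:CatAdTen}, 2-cells of $\CatAdTen$ exist only between 1-morphisms lying over the \emph{same} map of pointed sets; hence a 1-morphism lying over a non-invertible map admits no adjoint inside $\CatAdTen$ (its unit and counit would have to be 2-cells comparing composites over $\psi\circ\varphi$, resp.\ $\varphi\circ\psi$, with identities, which forces the base map to be invertible). Now take an active Cartesian square whose bottom edge lies over $\pset{K}\rightarrow\pset{1}$ with $|\nobase{K}|\geq 2$: both horizontals of the image square sit over non-identity active maps, so they have no adjoints, no left mate exists, and ``the image square satisfies Beck-Chevalley'' is not a statement you can make, let alone prove by pasting; if moreover the top-right index set has size $\geq 2$, the verticals have no mates either. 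This is exactly why the paper phrases the conjecture in terms of the \comma property (a universal property meaningful in any bicategory), why \autoref{lem:delignetensorbc} claims only the \emph{left} BC condition (its coCartesian verticals have no mates), and why \autoref{prop:bccart} is proven only in $\CatAd$, where every 1-morphism does admit adjoints. The same defect infects the interior of your pasting grid: after coCartesian factorization of all four sides, the intermediate squares have coCartesian arrows as horizontals, so closure of BC under pasting never applies to them. What survives of your argument is the shape where the bottom edge lies over $\Id_{\pset{1}}$ --- fiberwise horizontals, coCartesian verticals --- where your decomposition into a Deligne-tensor square stacked on $\Hop^1$ of a Cartesian square (using that disjoint union commutes with fiber products in $\FinSet$) does give left BC; but that is essentially the case the paper already proves, modulo extending the assertion inside \autoref{lem:delignetensorbc} from presentations of tensors of pairs to coCartesian arrows over arbitrary active maps. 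To reach the conjecture one must verify the \comma universal property in $\CatAdTen$ directly, or first build a theory of mates relative to the fibration (and also justify your componentwise reduction over $\nobase{L}$ for that property, which is not automatic); this is the missing framework the authors point to, and your closing paragraph, which locates the obstruction in coherence bookkeeping, underestimates it --- the mates your argument is built on do not exist.
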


We believe that the an ideologically correct proof of this general statement should follow from factoring $\Hop$ through a functor to a category of algebra objects in $\CatAd$. This kind of approach necessitates a construction and careful consideration of such a framework which is outside the scope of this article. 

Instead, we restrict ourselves to proving this for the specific square needed for the construction of Heisenberg relations in \autoref{lem:delignetensorbc}. However, writing the statement in this general form allows us to generalize our approach as follows:

Restrict $\Hop$ over the subcategory of active maps in $\FinSet_*$ to get a functor 
\[
\Hop_{ac}:(\FinDisj)_{ac}\rightarrow(\CatAdTen)_{ac}
\]
and consider a bicategory $\Heis(\Hop)$  defined by the following comma square of bicategories

\begin{equation}\label{heisenbergsquare}
\stik{1}{\Heis(\Hop) \ar{r} \ar{d}[left]{p} \& \CatAd \ar{d}{i} \ar[Rightarrow,shorten <=0.7em,shorten >=0.7em]{dl}[above,sloped]{} \\
(\FinDisj)_{ac} \ar{r}{\Hop_{ac}} \& (\CatAdTen)_{ac}
}
\end{equation}
where $\CatAd\xrightarrow{i}(\CatAdTen)_{ac}$ is the obvious imbedding. We call the bicategory $\Heis(\Hop)$ the \emph{categorical Heisenberg double} of an SSH functor $\Hop$.

The objects and 1-morphisms in $\Heis(\Hop)$ can be described as follows
\begin{itemize}
\item Objects are pairs $(C,S=(S_k)_{k\in K})$ with $C\in\CatAd$ and $S\in\FinDisj$, along with a morphism $i(C)\xrightarrow{a}\Hop(S)$.
\item A morphism $(C,S)\rightarrow(D,T)$ is a pair $(C\xrightarrow{\varphi}D,S\xrightarrow{f}T)$, and a square in $\CatAdTen$\[
\stik{1}{
i(C) \ar{d} \ar{r}{i(\varphi)} \& i(D) \ar{d} \ar[Rightarrow,shorten <=0.7em,shorten >=0.7em]{dl}[above,sloped]{\alpha}\\
\Hop(S) \ar{r}[below]{\Hop(f)} \& \Hop(T)
}
\]
We call $\varphi$ the \emph{functor part} of the morphism, and $\alpha$ the \emph{transformation part} of the morphism.
\item 2-morphisms are defined in the obvious way.
\end{itemize}

In particular, let $(\CCC,(S_i),a)$ be an object in $\Heis(\Hop)$ where the map $a$ sits over an injection $\pset{1}\rightarrow\pset{n}$ which sends $1$ to $k$. Let $\Hop(S)=(\Hop(S_1),\ldots,\Hop(S_n))$, then $a$ amounts to a choice of elements in all but the $k^{\text{th}}$ category, and a map $\CCC\rightarrow \Hop(S_k)$.

Both functors forming the bottom right corner of the pullback square \eqref{heisenbergsquare} preserve \comma squares (whose definition closely resembles that of a weighted limit) hence it is reasonable to expect

\begin{Conjecture}
\label{conj:heiscomma}
The functors out of $\Heis(\Hop)$ preserve \comma squares.
\end{Conjecture}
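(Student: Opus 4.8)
The two functors out of $\Heis(\Hop)$ are the projection $p\colon\Heis(\Hop)\to(\FinDisj)_{ac}$ appearing in \eqref{heisenbergsquare} together with the top projection $q\colon\Heis(\Hop)\to\CatAd$. The plan is to exploit the observation made just before the statement, that a \comma square is itself a weighted limit and that $\Heis(\Hop)$ is by construction the comma object of the cospan $\CatAd\xrightarrow{i}(\CatAdTen)_{ac}\xleftarrow{\Hop_{ac}}(\FinDisj)_{ac}$, hence also a weighted limit. Since limits commute with limits, the projections out of such a comma object ought to preserve any class of limit-shaped data that is preserved by the two legs $i$ and $\Hop_{ac}$ of the defining cospan. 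Thus the whole argument should reduce to the assertion that $i$ and $\Hop_{ac}$ preserve \comma squares: the latter is exactly \autoref{conj:activecomma} (which we assume), and the former holds because $i\colon\CatAd\hookrightarrow(\CatAdTen)_{ac}$ is the inclusion of the fibre over $\pset{1}$, under which a square that is \comma in $\CatAd$ — equivalently Beck--Chevalley by \autoref{prop:bccart} — remains \comma.

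To make this precise I would first unwind the comma construction: a square (with its filling $2$-cell) in $\Heis(\Hop)$ is the data of its image $\sigma_q$ under $q$, a square in $\CatAd$; its image $\sigma_p$ under $p$, a square in $(\FinDisj)_{ac}$; and an oriented cube of comma $2$-cells relating $i(\sigma_q)$ and $\Hop_{ac}(\sigma_p)$ inside $(\CatAdTen)_{ac}$. The heart of the proof is then an interchange-of-limits statement: because forming a \comma square and forming the comma object $\Heis(\Hop)$ are both weighted limits, and because the legs $i$ and $\Hop_{ac}$ preserve \comma squares, the \comma square on a given cospan in $\Heis(\Hop)$ should be canonically identified with the comma object assembled from the \comma squares on the two projected cospans in $\CatAd$ and in $(\FinDisj)_{ac}$. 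Granting this identification, the projections $p$ and $q$ visibly carry the \comma square in $\Heis(\Hop)$ to those two \comma squares, which is the desired preservation; the same identification shows that $p$ and $q$ jointly reflect \comma squares. I would prove the identification by verifying the universal property of \autoref{sec:2Cartesian} on both sides, using the preservation hypotheses for $i$ and $\Hop_{ac}$ to fill and compare the comma $2$-cells of any competing square.

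The main obstacle is twofold. First, the argument is only as strong as \autoref{conj:activecomma}, which is established in this article solely for the single square of \autoref{lem:delignetensorbc}; a genuine proof in the needed generality would require the factorisation of $\Hop$ through algebra objects in $\CatAd$ alluded to after that conjecture. Second, even granting this, turning the slogan that limits commute with limits into a theorem for comma objects of bicategories demands a careful formulation of the commuting-limits principle in the $(\infty,2)$-flavoured setting, and the comparison step forces one to manipulate the same kind of oriented $4$-cubes, with their interchange-law bookkeeping, that already appear in the proof of \autoref{proof:cartesianBC}; checking that the glued square is a bona fide competitor and that projection preserves both existence and essential uniqueness of the comparison is where the real coherence work lies. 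An alternative route worth pursuing is to show directly that $p$ is a (co)Cartesian fibration, so that \comma squares in the total bicategory can be detected fibrewise over $(\FinDisj)_{ac}$, but this again relies on \autoref{conj:activecomma} to control the fibres.
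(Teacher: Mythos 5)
You should note first that the statement you were asked to prove is labelled a \emph{Conjecture} in the paper: the authors give no proof of it at all. The only justification they offer is the single sentence preceding it --- that both legs $i$ and $\Hop_{ac}$ of the cospan defining $\Heis(\Hop)$ in \eqref{heisenbergsquare} preserve \comma squares, and that the definition of a \comma square ``closely resembles that of a weighted limit'' --- after which they explicitly use the conjecture only as a heuristic to produce the decomposition \eqref{bcdecomposition}, whose Beck--Chevalley property they then verify by hand in \autoref{lem:delignetensorbc}. Your proposal is essentially this same heuristic, expanded; so it is not ``a different route from the paper'' but an elaboration of the paper's motivation, and it does not close the gap any more than the paper does.

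The gaps are genuine and you have named most of them yourself, but they should be stated as defects rather than as follow-up work. First, the reduction rests on \autoref{conj:activecomma}, which is itself conjectural: the paper proves only the one instance needed (\autoref{lem:delignetensorbc}) and says that a general proof would require factoring $\Hop$ through algebra objects in $\CatAd$, a framework it declines to build. Second, ``limits commute with limits'' is not a citable theorem in this setting: $\Heis(\Hop)$ is a comma object of \emph{bicategories}, the relevant 2-cells are non-invertible and oriented, and the paper's notion of \comma square is a \emph{weakened} comma (finality of one object in a hom-category, per \autoref{app:WeightedLimits}, not an equivalence of hom-categories), so the standard interchange-of-weighted-limits argument does not apply off the shelf; making it apply is exactly the coherence work the paper avoids. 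Third, even your input hypothesis that $i$ preserves \comma squares is not automatic and is nowhere proved: a square final among competing squares in $\CatAd$ must be shown final among the strictly larger class of competing squares in $(\CatAdTen)_{ac}$, whose corners may lie over arbitrary active maps rather than over $\pset{1}$; the equivalence with Beck--Chevalley squares (\autoref{prop:bccart}) is proved only inside $\CatAd$ and does not by itself give this. So your proposal is a reasonable plan of attack, consistent with the authors' stated intuition, but it is a proof outline with unproven inputs, not a proof --- which is consistent with the fact that the statement remains a conjecture in the paper.
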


Let us use the above to provide a canonical construction of the square $\beta$ from the beginning of this section. We will compute the pullback of a preimage of its lower right corner in $\Heis(\Hop)$. If we believe the conjectures in this section, the resulting $\comma$ square in $\Heis(\Hop)$ - which is a canonical object - should then project to an \comma square in $\CatAd$, which is the same as a Beck-Chevalley square.

We have the obvious preimages for the maps $i_F,\overline{m}$ (abbreviating $(\fset{k})$ for $\Hop(\fset{k})$):
\begin{align*}
\stik{1}{
(\fset{1}) \ar{d}[left]{(F,\Id)} \ar{r}{i_F} \& (\fset{2}) \ar[equal]{d} \ar[Rightarrow,shorten <=0.7em,shorten >=0.7em]{dl}\\
(\fset{1},\fset{1}) \ar{r}[below]{\Hop(c)} \& (\fset{2})
}
& \stik{1}{}
& \stik{1}{
(\fset{4}) \ar[equal]{d} \ar{r}{\overline{m}} \& (\fset{2}) \ar[equal]{d} \ar[equal,shorten <=0.7em,shorten >=0.7em]{dl}\\
(\fset{4}) \ar{r}{\overline{m}} \& (\fset{2})
}
\end{align*}
where $c=(1\mapsto 1,1\mapsto 2)$ and in the left square the 2-morphism is the unitor morphism $\Id\circ(f\circ g)\rightarrow  f\circ g$.

So we consider the following bottom right corner in $\Heis(\Hop)$:
\[
\stik{1}{
{} \& {} \& {} \& (\fset{1},\fset{1}) \ar{dd}\\
{}  \& {} \& (\fset{1}) \ar{dd}[right,yshift=1pc]{i_F} \ar{ur}{(F,\Id)} \& {}\\
{} \& (\fset{4}) \ar{rr}[xshift=-2pc]{\overline{m}} \& {} \& (\fset{2})\\
(\fset{4}) \ar{rr}{\overline{m}} \ar[equal]{ur} \& {} \& (\fset{2}) \ar[equal]{ur}
\latearrow{commutative diagrams/crossing over}{2-3}{4-3}{}
}
\]
And we want to complete it to an \comma square. By \autoref{conj:heiscomma} the back face should come from an \comma square in $\FinDisj$, so first we compute the product in the back (as in \autoref{lem:commafinset}) to get
\[
\stik{1}{
{} \& (\fset{2},\fset{2}) \ar{dd}[left]{(\bar{c})} \ar{rr}{(m,m)}\& {} \& (\fset{1},\fset{1}) \ar{dd}{(c)}\\
{}  \& {} \& (\fset{1}) \ar{dd}[right,yshift=1pc]{i_F} \ar{ur}{(F,\Id)} \& {}\\
{} \& (\fset{4}) \ar{rr}[xshift=-2pc]{\overline{m}} \& {} \& (\fset{2})\\
(\fset{4}) \ar{rr}{\overline{m}} \ar[equal]{ur} \& {} \& (\fset{2}) \ar[equal]{ur}
\latearrow{commutative diagrams/crossing over}{2-3}{4-3}{}
}
\]
where $\bar{c}=(\mapstack{1\mapsto 1}{1\mapsto 3},\mapstack{1\mapsto 2}{1\mapsto 4})$. Note that the back face is the image of an active \comma square under $\Hop$, so is an \comma square in $\CatAd$ by \autoref{conj:activecomma}.

To proceed we need to make an ansatz regarding \comma squares:

Suppose that we have completed what we have to a cube representing an \comma square in $\Heis(\Hop)$. 

\[
\stik{1}{
{} \& (\fset{2},\fset{2}) \ar{dd}[left,yshift=-1pc]{(\bar{c})} \ar{rr}{(m,m)}\& {} \& (\fset{1},\fset{1}) \ar{dd}{(c)}\\
(?) \ar{dd}[left]{?} \ar[crossing over]{rr}[xshift=1pc]{?} \ar{ur}{?}  \& {} \& (\fset{1}) \ar{dd}[right,yshift=1pc]{i_F} \ar{ur}{(F,\Id)} \& {}\\
{} \& (\fset{4}) \ar{rr}[xshift=-2pc]{\overline{m}} \& {} \& (\fset{2})\\
(\fset{4}) \ar{rr}{\overline{m}} \ar[equal]{ur} \& {} \& (\fset{2}) \ar[equal]{ur}
\latearrow{commutative diagrams/crossing over}{2-3}{4-3}{}
}
\]

The right side of the cube represents a morphism in $\Heis(\Hop)$ which has a fully faithful functor part and an invertible transformation part. This is reminiscient of a monomorphism, and so it seems reasonable to conjecture that the left face, being a "pullback" of the right along the bottom, should also be of this type. Given this, and that by \autoref{conj:heiscomma} the front face is an \comma square in $\CatAd$, an extension of the pasting lemma for pullbacks to the \comma case would give us that the top square must also be \comma.

Therefore, to proceed we compute the product on the top. Consider the corner 
\[
\stik{1}{
{} \& (\fset{1}) \ar{d}{(F,\Id)} \\
(\fset{2},\fset{2}) \ar{r}{(m,m)} \& (\fset{1},\fset{1})
}
\]
Since the element in the bottom right is over $\pset{2}$, a square with this corner is given by two squares, with corners:
\begin{align*}
\stik{1}{
{} \& (\fset{1}) \ar[equal]{d} \\
(\fset{2}) \ar{r}{(m)} \& (\fset{1})
}
&\stik{1}{}
&
\stik{1}{
{} \& () \ar{d}{(F)} \\
(\fset{2}) \ar{r}{(m)} \& (\fset{1})
}
\end{align*}
and we should compute the product separately for each one.
The first corner fits in a degenerate square, and any degenerate square is \comma. 
Let us compute the pullback for the second corner:

Suppose we have a square with this bottom right corner, i.e.
\[
\stik{1}{
(\CCC_i) \ar{r} \ar{d}{\varphi} \& () \ar{d} \\
(\fset{2}) \ar{r}{m} \& (\fset{1})
}
\]

First, the map $\varphi$ must sit over a map which sends everything to the base point, in order for the whole square to sit over a commutative square. So we may assume that $(C_i)=()$ and that $\varphi$ is the selection of an element of $\fset{2}$. In all, we can assume that the square is of the form 
\[
\stik{1}{
() \ar{d}[left]{X} \ar[equal]{r} \& () \ar{d}{F} \ar[Rightarrow,shorten <=0.7em,shorten >=0.7em]{dl}[above,sloped]{\gamma}\\
(\fset{2}) \ar{r}{m} \& (\fset{1})
}
\]

i.e. a map $F\xrightarrow{\gamma}mX$. If this square is to be \comma, the map $\gamma$ must satisfy the property that for any map $F\xrightarrow{\psi}mY$ we have a unique map $X\xrightarrow{\tau}Y$ such that $\psi=m(\tau)\circ\gamma$, i.e. that $X$ together with $\gamma$ is a local left adjoint for $m$ at $F$. Since $m$ has a global left adjoint, this implies that $X$ and $\gamma$ are canonically isomorphic to $\Delta(F)$ and $F\xrightarrow{\eta}m\Delta(F)$.

So now we have
\[
\stik{1}{
{} \& (\fset{2},\fset{2}) \ar{dd}[left,yshift=-1pc]{(\bar{c})} \ar{rr}{(m,m)}\& {} \& (\fset{1},\fset{1}) \ar{dd}{(c)}\\
(\fset{2}) \ar[crossing over]{rr}[xshift=1pc]{m} \ar{ur}{(\Delta(F),\Id)}  \& {} \& (\fset{1}) \ar{dd}[right,yshift=1pc]{i_F} \ar{ur}{(F,\Id)} \& {}\\
{} \& (\fset{4}) \ar{rr}[xshift=-2pc]{\overline{m}} \& {} \& (\fset{2})\\
(\fset{4}) \ar{rr}{\overline{m}} \ar[equal]{ur} \& {} \& (\fset{2}) \ar[equal]{ur}
\latearrow{commutative diagrams/crossing over}{2-3}{4-3}{}
}
\]
and we complete to a cube by setting the missing 1-morphism to be the composition $(\bar{c})\circ(\Delta(F),\Id)=i_{\Delta(F)}$; the left face to be the unitor isomorphism to the composition (to be compatible with our ansatz); and the front face to be the unique 2-morphism that makes the cube commute as in \autoref{app:cubecommutes}.

In all we have the cube which should represent an \comma square in $\Heis(\Hop)$
\[
\stik{1}{
{} \& (\fset{2},\fset{2}) \ar{dd}[left,yshift=-1pc]{(\bar{c})} \ar{rr}{(m,m)}\& {} \& (\fset{1},\fset{1}) \ar{dd}{(c)}\\
(\fset{2}) \ar{dd}[left]{i_{\Delta(F)}} \ar[crossing over]{rr}[xshift=1pc]{m} \ar{ur}{(\Delta(F),\Id)}  \& {} \& (\fset{1}) \ar{dd}[right,yshift=1pc]{i_F} \ar{ur}{(F,\Id)} \& {}\\
{} \& (\fset{4}) \ar{rr}[xshift=-2pc]{\overline{m}} \& {} \& (\fset{2})\\
(\fset{4}) \ar{rr}{\overline{m}} \ar[equal]{ur} \& {} \& (\fset{2}) \ar[equal]{ur}
\latearrow{commutative diagrams/crossing over}{2-3}{4-3}{}
}
\]
Now, the front face is exactly the square $\beta$ and assuming \autoref{conj:heiscomma}, it is an \comma square in $\CatAd$. 

Independently of the conjectures, from the commutativity of the above cube we get the decomposition \eqref{bcdecomposition}, used to lift the Heisenberg relation in the previous section.In this sense one can consider the material in this chapter as providing a heuristic method for constructing such canonical squares.


\subsection{A categorification of the Fock space action of the infinite dimensional Heisenberg algebra}
\label{ssec:FockP}
In this section we consider the example of the SSH category $\PP$ of polynomial functors and apply our general construction of a categorical Heisenberg double to this example. This gives us a canonically constructed categorification of the Fock space representation of infinite dimensional Heisenberg algebra, which is the Heisenberg double associated to the Grothendieck group $K(\PP)$.

Consider the coCartesian arrow $c=(c_1,c_2):(\{1\},\{2\}) \cartesianarrow \{1,2\}$ in $\FinDisj$. Then, from the definition of the functor $\Hop$ giving the SSH structure on $\PP$, for any $F\in\PP$ the functor $i_F: \PP \rightarrow\PPS{\fset{2}}$ is given by the formula:
\begin{align*}
i_FX(V)=F(c_0^*V)\otimes X(c_1^*V)
\end{align*}
where $X \in \PP, V\in \Sh(\{1,2\})$.Its left and right adjoints are both given by the functor $j_F$:
\begin{align*}
j_F:\PPS{[2]} & \rightarrow\PP, &  j_F\Phi(V)=\Hom_\PP(F,\Phi(\boxtimes_c(-,V))
\end{align*}
where $\boxtimes_c(W, V)$ is the sheaf defined by $c_0^*\boxtimes_c(W, V)=W,c_1^*\boxtimes_c(W, V)=V$. 
The functors $m_F,\Delta_F$ are defined by composing
\begin{align*}
m_{F}&=m\circ i_F & \Delta_F&=j_F\circ\Delta
\end{align*} 

Note that these functors are biadjoint, so we will not distinguish between $\Delta^r_F$ and $\Delta^l_F$ and just write $\Delta_F$. 

The isomorphisms  
\begin{align*}
m_Fm_G\cong m_{F\otimes G} \\
\Delta_G\Delta_F\cong \Delta_{F\otimes G}
\end{align*}
come from the associator of the monoidal structure on the category $\Vect$. As we saw in section \autoref{ssec:CatHeisenbergConstructions}, the non-trivial part of the isomorphism $\Delta_F m\cong m \Delta^2_{\Delta(F)}$ is given by the mate of the top square in \eqref{bcdecomposition}. We want to check that the mate is invertible in the case of $\PP$. Since we are now working with a concrete example, we will just compute the mate using the formula from \autoref{sec:BC} and check it is an invertible morphism.

Consider the arrow $\bar{c}: (\fset{2},\fset{2})\rightarrow(\fset{4})$ in $\FinDisj$ given by the maps $\bar{c}_0:\mapstack{1\mapsto 1}{2\mapsto 3},\bar{c}_1:\mapstack{1\mapsto 2}{2\mapsto 4}$ and let
\begin{align*}
i_{\Phi}:\PPS{\fset{2}} & \rightarrow\PPS{\fset{4}} &i_{\Delta \Phi}\Psi(V)&=\Phi(\bar{c_0}^*V)\otimes \Psi(\bar{c_1}^*V)\\
j_{\Phi}:\PPS{\fset{4}} & \rightarrow\PPS{\fset{2}}, &  j_\Phi\Omega(V)&=\Hom(\Phi,\Omega(\boxtimes_{\bar{c}}(-,V))))
\end{align*}
The functors $i_\Phi, j_\Phi$ are biadjoint. Using the adjunction morphisms $\eta:\Id\rightarrow j_Fi_F$ and $\epsilon:i_{\Delta(F)}j_{\Delta(F)}\rightarrow\Id$, the mate of the top square in \eqref{bcdecomposition} is given by:
\begin{equation}
\label{rightmateofdoubleimage}
mj_{\Delta(F)}\xrightarrow{\eta}j_Fi_Fmj_{\Delta(F)}\xrightarrow{\alpha}j_F\overline{m} i_{\Delta(F)}j_{\Delta(F)}\xrightarrow{\epsilon}j_F\overline{m}
\end{equation}

The maps $\eta$ and $\epsilon$ are given by the following formulas:
\begin{equation}\label{i1j1runit}
\begin{gathered}
\begin{aligned}
G(V) \xrightarrow{\eta} j_Fi_F(G)(V) &=\Hom(F,F(-)\otimes G(V)) \\ &=\End(F) \otimes G(V)\end{aligned} \\  
x \mapsto \Id \otimes x\
\end{gathered}
\end{equation}
\begin{equation}\label{i1j1rcounit}
\begin{gathered}
i_{\Delta F}j_{\Delta F}(\Omega)(V)=\Delta F(\bar{c}_0^*V)\otimes \Hom (\Delta F,\Omega(\boxtimes_{\bar{c}}(-,\bar{c}_1^*V))) \xrightarrow{\epsilon} \Omega(V) \\ 
x\otimes\tau \mapsto \tau_{\bar{c}_0^*V}(x)
\end{gathered}
\end{equation}
(note that for any sheaf $V\in\Sh(\fset{4})$ the sheaf $\boxtimes_{\bar{c}}(\bar{c}_0^*V, \bar{c}_1^*V)$ is canonically isomorphic to $V$)

Take $\Omega\in\PPS{4},U\in\Sh(\fset{1})$ and let $a:\fset{2}\rightarrow \fset{1}$. Then 
\begin{equation}
mj_{\Delta(F)}\Omega(U)=\Hom(\Delta F,\Omega(\boxtimes_{\bar{c}}(-,\bar{c}_1^*U)))
\end{equation}

We want to take $x\in mj_{\Delta(F)}\Omega(U)$ and follow it through the map. Denote by $W$ some vector space and $v$ an element in $F(W)$, then
\begin{gather*}
\eta(x)_W=(v\mapsto v\otimes x)\\
\alpha(\eta(x))_W=(v\mapsto \eta_L(v)\otimes m(x)) \\
\epsilon(\alpha(\eta(x)))_W=(v\mapsto m(x)(\eta_L(v)))
\end{gather*}

where $\eta_L$ is the map $F\rightarrow m\Delta F$ from the adjunction $(m,\Delta)$ (see \autoref{app:pullpushadjsets} for details) and $m(x)$ is the application of $m$ to $x$, which is a map $m\Delta F$ to $m\Omega(\boxtimes_{\bar{c}}(-,\bar{c}_1^*U))$. Altogether, we have computed that the mate takes the map $x$ to $m(x)$ precomposed with the unit $\eta_L$, but this is exactly the construction of the isomorphism \[
\Hom(\Delta F,\Omega(\boxtimes_{\bar{c}}(-,\bar{c}_1^*U)))\xrightarrow{\sim}\Hom(A,m\Omega(\boxtimes_{\bar{c}}(-,\bar{c}_1^*U)))
\]
for the adjoint pair $m,\Delta$, so it is invertible.


\section{Equivariant polynomial functors}
\label{sec:pequivariant}
In this section we extend our example of polynomial functors to a class of examples which we call $G$-equivariant polynomial functors. This is the direct categorical analog of the example of representations of wreath products $S_n[G]$ considered by Zelevinsky in \cite{Zelbook}.


\subsection{Definition}
\label{ssec:defpequivariant}
Recall that we defined the SSH structure on $\PP$ in \autoref{sec:selfadjointhopf} by giving a symmetric monoidal functor $\FinDisj\xrightarrow{\Hop}\CatAdTen$. On objects it was defined by 
\[
\Hop(S)=\PPS{S}
\]
where $\PPS{S}$ was defined to be the category of polynomial functors from $\Sh(S)$ to $\Vect$.

Fix a finite group $G$. We define a functor $\Hop_G$ in the same way we defined $\Hop$, but replacing $\Sh(S)$ with $\Sh_G(S)$, i.e. sheaves of $G$-representations on $S$.
\begin{Remark}
More naturally, the assignment $S\mapsto\Sh_G(S)$ can be split into two pieces:
\begin{enumerate}
\item The imbedding $\FinSet\hookrightarrow\GSet$ given by $S\mapsto S$ with the trivial $G$ action.
\item The functor $\GSet\xrightarrow{}\Vect\Cat$ which sends a $G$-set $X$ to the category of sheaves on the groupoid $X/G$.
\end{enumerate}
It might be interesting to consider on its own the functor $\GSet\rightarrow\Vect\Cat$.
\end{Remark}

Define $\PP_G$ to be $\Hop_G(\fset{1})$.


\subsection{A manifestation of Zelevinsky's decomposition Theorem}
Zelevinsky's main theorem about PSH algebras, is that they are all tensor products of many copies of the algebra $\Lambda$ of symmetric functions. His proof is somewhat combinatorial, and gives the morphism to the tensor product only up to a non canonical choice.

In this section we would like to show a categorical analog of it, where we obtain a canonical equivalence (of SSH categories) to a tensor power of $\PP$. We conjecture that a similar result holds in general for SSH categories.

Consider the functor $Y:\Sh_G(S)\rightarrow\Sh(S\times \Irr G)$ given by \[
Y(V)(s,\rho)=\Hom(\rho,Y_s)
\]
Since we are working in characteristic zero, this functor is an equivalence. Hence it induces an equivalence \begin{align*}
\PPS{(S\times \Irr G)}&=\PolyFun(\Sh(S\times \Irr G),\Vect)\\
&\xrightarrow{\sim} \PolyFun(\Sh_G(S),\Vect)=\Hop_G(S)
\end{align*}

This equivalence obviously commutes with the sheaf operations on the $S$ component, and hence induces an equivalence of SSH structures between $S\mapsto \PPS{(S\times \Irr G)}$ (as in \autoref{prop:shonpower}) and $\Hop_G$.

In particular we have that $\PPS{\Irr G}$ is equivalent to $\PP_G$ as a symmetric monoidal category, since the symmetric monoidal structure on each of them comes from the SSH structure that they are a part of.

\begin{Remark}
\label{rem:EquiToPpower}
The functor $Y$ has an inverse, which we denote $Y^{-1}$ given by
\[
Y^{-1}(V)(s)=\bigoplus_{\rho\in\Irr G}V_{(s,\rho)}\otimes \rho
\]
This will be of use to us in the next subsection.

Note that this functor is defined up to a choice of concrete models for the irreducible representations of $G$.
\end{Remark}

\subsection{Connection with wreath products}
In \cite{Zelbook}, Zelevinsky considers the PSH algebra $R_S[G]:=\bigoplus_n K\left( \Rep(S_n[G])\right)$ where $S_n[G]$ is the wreath product $G^n\rtimes S_n$, multiplication is given by induction and inner product is given by dimension of hom-space.

\begin{Proposition}
There is a natural map of PSH algebras $R_S[G]\rightarrow K(\PP_G)$, and this map is an isomorphism.
\end{Proposition}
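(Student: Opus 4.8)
The plan is to realise the map through a Schur--Weyl construction for wreath products and then to deduce that it is an isomorphism of PSH algebras from the equivalence $\PP_G\simeq\PP^{\otimes\Irr G}$ established in the previous subsection together with Zelevinsky's computation of $R_S[G]$ in \cite{Zelbook}. First I would construct the map. For $V\in\Sh_G(\fset{1})$ the tensor power $V^{\otimes n}$ carries a canonical action of the wreath product $S_n[G]=G^n\rtimes S_n$, in which the $i$-th copy of $G$ acts on the $i$-th tensor factor and $S_n$ permutes the factors. Hence for $M\in\Rep(S_n[G])$ the assignment
\[
V\longmapsto M\otimes_{\kk[S_n[G]]}V^{\otimes n}
\]
is a functor $\Sh_G(\fset{1})\to\Vect$ which is polynomial of degree $n$, i.e. an object of $\PP_G$ supported in degree $n$. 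Since we work in characteristic $0$ everything is semisimple and this construction is exact in $M$, so it descends to a graded group homomorphism $R_S[G]\to K(\PP_G)$.

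Next I would check that this is a homomorphism of algebras. The product on $R_S[G]$ is induction along $S_n[G]\times S_m[G]\hookrightarrow S_{n+m}[G]$, while the product on $K(\PP_G)$ is induced by the symmetric monoidal structure $(F\otimes F')(V)=F(V)\otimes F'(V)$ of \autoref{prop:symmonstruct}. The required compatibility is the natural isomorphism
\[
\bigl(\ind_{S_n[G]\times S_m[G]}^{S_{n+m}[G]}(M\boxtimes M')\bigr)\otimes_{\kk[S_{n+m}[G]]}V^{\otimes(n+m)}\;\cong\;\bigl(M\otimes_{\kk[S_n[G]]}V^{\otimes n}\bigr)\otimes\bigl(M'\otimes_{\kk[S_m[G]]}V^{\otimes m}\bigr),
\]
which follows from the tensor identity for induced modules combined with the decomposition of $V^{\otimes(n+m)}$ as $V^{\otimes n}\otimes V^{\otimes m}$ restricted to $S_n[G]\times S_m[G]$; this is the wreath-product analogue of the identity underlying $\Lambda\cong\bigoplus_n K(\Rep(S_n))$. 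The unit is handled by the case $n=0$.

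To see that the map is an isomorphism I would argue one degree at a time. The functor $M\mapsto(V\mapsto M\otimes_{\kk[S_n[G]]}V^{\otimes n})$ is the wreath-product Schur--Weyl functor, and I claim it is an equivalence of $\Rep(S_n[G])$ onto the degree-$n$ part $\PP_G^{(n)}$. Granting the equivalence $\PP_G\simeq\PP^{\otimes\Irr G}$ and the classical identification of homogeneous degree-$k$ polynomial functors with $\Rep(S_k)$, the degree-$n$ part becomes $\bigoplus_{\sum_\rho n_\rho=n}\boxtimes_{\rho\in\Irr G}\Rep(S_{n_\rho})$, whose simple objects are the tuples of partitions $(\lambda_\rho)_{\rho\in\Irr G}$ with $\sum_\rho|\lambda_\rho|=n$ -- precisely the Clifford-theoretic parametrisation of $\Irr(S_n[G])$. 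Since our map is a ring homomorphism carrying irreducibles bijectively to simple functors, it is an isomorphism. The PSH structure is then automatic: the Schur--Weyl functor, being an equivalence, preserves $\Hom$-spaces, so the inner products -- given on both sides by $\dim\Hom$ -- agree, and the distinguished basis of irreducibles maps to the distinguished basis of simple polynomial functors, so the map is positive.

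The main obstacle is the wreath-product Schur--Weyl statement itself, namely that $M\mapsto(V\mapsto M\otimes_{\kk[S_n[G]]}V^{\otimes n})$ is an equivalence onto $\PP_G^{(n)}$ compatibly with induction and tensor. Granting $\PP_G\simeq\PP^{\otimes\Irr G}$ this reduces to the case $G=1$ together with bookkeeping over $\Irr G$; the genuinely delicate point is matching the \emph{canonical} map constructed above with the composite through Zelevinsky's (non-canonical) decomposition theorem, so that multiplicativity and the bijection on simples together force bijectivity rather than merely equality of dimensions.
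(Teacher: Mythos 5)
Your construction of the map and the multiplicativity check are sound; they are the covariant, tensor-product counterpart of the paper's contravariant functor $L_\rho(V)=\Hom_{G_n}(\rho,V^{\otimes n})$, and the tensor identity for induced modules is the right tool for compatibility with induction. The gap is in the isomorphism step. Everything there rests on the claim that $M\mapsto(V\mapsto M\otimes_{\kk[S_n[G]]}V^{\otimes n})$ is an equivalence of $\Rep(S_n[G])$ onto the degree-$n$ part of $\PP_G$, but what you actually argue is only that the two categories have simple objects indexed by the same set: tuples of partitions $(\lambda_\rho)_{\rho\in\Irr G}$ of total size $n$. That gives equality of the ranks of the $K$-groups in each degree; it does not show that \emph{your} homomorphism carries the basis of irreducibles to the basis of simples, nor even that it is injective --- a graded ring homomorphism between free $\ZZ$-modules of equal finite rank in every degree can perfectly well fail to be injective or surjective. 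You flag this yourself in the final paragraph (``the genuinely delicate point''), but that delicate point is the entire content of the proposition, and the proposal contains no argument that closes it. The claim that the PSH structure is preserved is likewise circular, since you derive inner-product preservation from the very equivalence that is unproven.

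For contrast, the paper never proves a categorical equivalence. It first shows its functor $L$ is fully faithful, by evaluating on the regular representation $\kk[G]$; this immediately gives that the induced map of $K$-groups preserves the inner products $\dim\Hom$, i.e.\ is a genuine morphism of PSH algebras. It then tracks only the irreducible \emph{primitive} elements rather than all simples: after composing with the equivalence $\PP_G\simeq\PPS{\Irr G}$, each $\rho\in\Irr G$ goes to the functor $R_\rho(V)=V_\rho$; these exhaust the irreducible primitives of $K(\PPS{\Irr G})$, while by Zelevinsky the set $\Irr G$ exhausts the irreducible primitives of $R_S[G]$. Zelevinsky's structure theory then forces a PSH morphism that is bijective on irreducible primitives to be an isomorphism. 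To repair your proof you must either (a) actually prove the wreath Schur--Weyl equivalence --- full faithfulness (for instance by an evaluation argument as in the paper) plus essential surjectivity --- or (b) verify directly that your map preserves inner products and restricts to a bijection on irreducible primitives, and then invoke Zelevinsky's theorem. The mere coincidence of the parametrisation of $\Irr(S_n[G])$ with that of the simples of $\PPS{\Irr G}$ in each degree accomplishes neither.
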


\begin{proof}
We will first construct a contravariant exact functor

\[
L:\bigoplus_n \Rep(S_n[G])\rightarrow\PP_G
\]
Denote $G_n:=S_n[G]$, and let $\rho$ be a representation of $G_n$. We define $L_\rho\in\PP_G$ as follows: Take $V\in\Sh_G(\fset{1})=\Rep(G)$. Then $V^{\otimes n}$ is naturally a representation of $G_n$, and we define \[
L_\rho(V):=\Hom_{G_n}(\rho,V^{\otimes n})
\]

Since we are in characteristic 0, this functor is obviously exact, so induces a map of $K$-groups, and we need to check that it preserves all the relevant structures.

First, it preserves multiplication because 
\begin{align*}
m(L_{\rho_1},L_{\rho_2})(V)&=L_{\rho_1}(V)\otimes L_{\rho_2}(V)\\
&= \Hom_{G_{n_1}}(\rho_1,V^{\otimes n_1})\otimes\Hom_{G_{n_2}}(\rho_2,V^{\otimes n_2})\\
&\cong \Hom_{G_{n_1}\times G_{n_2}}(\rho_1\boxtimes \rho_2,V^{\otimes n_1}\boxtimes V^{\otimes n_2})\\
&\cong \Hom_{G_{n_1+n_2}}(\rho_1\cdot\rho_2,V^{\otimes (n_1+n_2)})\\
&=L_{\rho_1\cdot\rho_2}(V)
\end{align*}

Secondly, it is fully faithful, since by evaluating on $\kk[G]$ we see that

$\Hom(L_{\rho_1},L_{\rho_2})$ is canonically isomorphic to $\Hom({\rho_2},{\rho_1})$ with the isomorphism given by the map $\Hom({\rho_2},{\rho_1})\rightarrow\Hom(L_{\rho_1},L_{\rho_2})$ induced by $L$. The upshot is that the map of algebras preserves the inner product, and in all is a map of PSH algebras.

Now, compose the functor $L$ with the equivalence induced by $Y^{-1}$ (from \autoref{rem:EquiToPpower}). Since both functors preserve multiplication, we get a map of algebras $R_S[G]\rightarrow K(\PPS{\Irr G})$.

If $\rho\in\Irr G$, this composition of functors takes $\rho$ to a polynomial functor 
\[
R_\rho:\Sh(\Irr G)\rightarrow \Vect
\]
given by\[
R_\rho(V)=V_\rho
\]
It is obvious that $R_\rho$ defines a primitive irreducible element of $K(\PPS{\Irr G})$. Moreover, these are all of the irreducible primitives in this algebra.

Also, as noted in \cite{Zelbook}, $\Irr G$ is the set of all irreducible primitives in $R_S[G]$. In all, we have a map of PSH algebras, which sends the set of irreducible primitives on the LHS bijectively onto the set of irreducible primitives on the RHS, so it must be an isomorphism.

As a consequence, the map induced by $L$ is also an isomorphism.

\end{proof}


\subsection{Generalization}
In the above, the role played by the group $G$ is all via the algebra $\kk[G]$. We can therefore replace $\kk[G]$ by some algebra $A$ and repeat the same construction. This is especially interesting for $A$ which is not semisimple, and this will be addressed in a future work. For instance the case of $A=\CC[x]/x^2$ is related to a categorification of the quantum group $U_q(sl_2)$ (see \cite{CautisLicataQuantum}).

\appendix
\section{Commutative cubes}
\label{app:commcube}
In this section we explain what we mean by the term "commutative cube" in a bicategory. The general statement is as follows: for an $n$-dimensional cube given an ordering of the coordinates there is a concise way to orient the 2-morphisms, thus getting a diagram of 2-morphisms for any cube. We will say that the cube is commutative if this diagram is. The ordering used in this article is described below. A good general reference for this topic is \cite{graycoherence}. 

\subsection{2-cubes}

Consider a square, where we have ordered the coordinates as $x<y$. Then we orient the 2-morphism by the lexicographical order, i.e. $xy\rightarrow yx$.

\begin{align*}
\stik{1}{
{} \ar{r}{x} \ar{d}{y} \& {}\\
{} \& {}
}
&
{}
&
\stik{1}{
{} \ar{r}{x} \ar{d}[left]{y}\& {} \ar{d}{y}\ar[Rightarrow,shorten <=1em,shorten >=1em]{dl}[above,sloped]{}\\
{} \ar{r}[below]{x}\& {}
}
\end{align*}

\subsection{3-cubes}
Order the three coordinates in the cube as $x<y<z$. The edges of the cube are all oriented positively in one of these directions, i.e. as in the diagram:
\[\stik{1}{
{} \& {} \ar{rr} \ar{dd} \& {} \& {} \ar{dd}\\
{} \ar[crossing over]{rr}[xshift=1pc]{x} \ar{dd}{y} \ar{ur}{z} \& {} \& {} \ar{dd}{} \ar{ur} \& {}\\
{} \& {} \ar{rr} \& {} \& {}\\
{} \ar{rr} \ar{ur} \& {} \& {} \ar{ur}
\latearrow{commutative diagrams/crossing over}{2-3}{4-3}{}
}
\]
The 2-morphisms are oriented as in the previous section. E.g. we have a morphism $xy\rightarrow yx$, and via whiskering we get a morphism $xyz\rightarrow  yxz$.

In this way we get a diagram of all 2-morphisms between full paths on the cube, which is a hexagon
\begin{equation}
\label{app:2morphoncube}
\stik{1}{
\& yxz \ar{r} \& yzx \ar{dr}\\
xyz \ar{ur} \ar{dr} \& \& \& zyx\\
\& xzy \ar{r}\& zxy \ar{ur}
}
\end{equation}

We say that the cube is commutative if this diagram commutes.

We can draw this diagram in the cube in the following way:
\begin{align*}
\stik{0.7}{
{} \& {} \& {}\\
{} \ar{ur}{\scalebox{2}{$z$}} \ar{rr}{\scalebox{2}{$x$}} \ar{dd}{\scalebox{2}{$y$}} \& {} \& {}\\
{} \& {} \& {} \\
{} \& {} \& {}
}
&
{}
&
\stik{0.7}{
{} \& {} \ar{rr}[name=topback,below]{} \ar{dd}[name=backleft,sloped,xshift=-1.5pc]{} \& {} \& {} \ar{dd}\\
{} \ar[crossing over]{rr}[name=topfront,above]{}\ar{dd}[name=frontleft]{} \ar{ur}[name=topleft,below]{} \& {} \& {} \ar{dd}[name=frontright,sloped,xshift=1pc]{} \ar{ur}[name=topright,below]{} \& {}\\
{} \& {} \ar{rr} \& {} \& {}\\
{} \ar{rr}[name=bottomfront,below]{} \ar{ur}[name=bottomleft,sloped,yshift=-0.2pc]{} \& {} \& {} \ar{ur}
\arrow[Rightarrow,to path={(topright) to[bend left] (topback)}]{}{}
\arrow[Rightarrow,to path={(frontright) to[bend right] (bottomfront)}]{}{}
\arrow[Rightarrow,crossing over,to path={(frontright) to[bend right] (topright)}]{}{}
\arrow[Rightarrow,to path={(bottomfront) to[bend right] (bottomleft)}]{}{}
\arrow[Rightarrow,to path={(bottomleft) to[bend left] (backleft)}]{}{}
\arrow[Rightarrow,to path={(topback) to[bend left] (backleft)}]{}{}
\latearrow{/tikz/commutative diagrams/crossing over}{2-3}{4-3}{}
\latearrow{/tikz/commutative diagrams/crossing over}{2-1}{2-3}{}
\latearrow{/tikz/commutative diagrams/crossing over}{2-3}{1-4}{}
}
\end{align*}

\begin{Lemma}
\label{lem:cubefront}
\label{app:cubecommutes}
Consider a cube in a bicategory:
\[\stik{1}{
{} \& {} \ar{rr}{x} \ar{dd}[left,yshift=-1pc]{y} \& {} \& {} \ar{dd}{y}\\
{} \ar[crossing over]{rr}[xshift=1pc]{x} \ar{dd}[left]{y} \ar{ur}{z} \& {} \& {} \ar{dd}[yshift=-1pc,left,xshift=-0.5pc]{y} \ar{ur}{z} \& {}\\
{} \& {} \ar{rr}[xshift=-1pc]{x} \& {} \& {}\\
{} \ar{rr}{x} \ar{ur}{z} \& {} \& {} \ar[equal]{ur}
\latearrow{commutative diagrams/crossing over}{2-3}{4-3}{}
}
\]
with all faces given except the front, and such that the left and bottom faces are invertible, then there is a unique 2-morphism that fits in the front face and makes the cube commute.
\end{Lemma}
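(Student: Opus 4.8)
The plan is to reduce commutativity of the cube to the commutativity of the associated hexagon of 2-morphisms \eqref{app:2morphoncube} and then to solve that hexagon for the single unknown 2-cell on the front face. Writing the six vertices of the hexagon as the six total orderings of $x,y,z$ (as in \autoref{app:commcube}), the front face we must fill is the 2-cell relating the paths $xyz$ and $yxz$: these two paths differ only in the order of the initial $x$- and $y$-steps before a common final $z$-step. Concretely, the hexagon arrow $xyz\to yxz$ is the front 2-cell whiskered on the right by the $z$-edge issuing from the front--bottom--right vertex. The decisive observation is that in our cube this $z$-edge is the one drawn as an equality, hence an identity 1-morphism, so that whiskering by it is invertible.

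Next I would track where the unknown enters the hexagon. The front 2-cell appears in \emph{exactly one} of the six hexagon edges, the one just described (the other reordering of $x$ and $y$, namely $zxy\to zyx$, is the \emph{back} face, whiskered by $z$ at the far end). This edge lies on the path $xyz\to yxz\to yzx\to zyx$, whose remaining two edges are the bottom face whiskered by $y$ and the left face whiskered by $x$. By hypothesis the bottom and left faces are invertible, and whiskering preserves invertibility, so both of these edges are invertible 2-morphisms. The opposite path $xyz\to xzy\to zxy\to zyx$ is assembled entirely from the given right, top, and back faces and is therefore a fully determined 2-morphism, which I will call $\Psi$.

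With this in hand the hexagon becomes a single equation asserting that the composite of the three 2-morphisms along the upper path equals $\Psi$; two of those three are invertible, so I can transport them to the other side and obtain a uniquely determined value for the whiskering of the front 2-cell by the identity $z$-edge. Finally, because that $z$-edge is an identity 1-morphism, this whiskering operation is, up to the unit constraints of the bicategory, the identity on the relevant hom-category; inverting it via the unitors recovers the front 2-cell itself, uniquely. This yields at once both existence and uniqueness of the desired 2-morphism, and by construction the completed cube commutes.

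The step I expect to require the most care is the last one: in a general bicategory one cannot recover a 2-morphism from an arbitrary whiskering, so the argument genuinely depends on the boundary $z$-edge being an identity. I would make this precise by invoking the left unitor to identify whiskering by an identity 1-morphism with an isomorphism of hom-categories, thereby justifying the unique ``un-whiskering.'' The only remaining bookkeeping is to fix the order of vertical composition and the left/right whiskering conventions consistently with \autoref{app:commcube}, which is routine once the hexagon has been written out explicitly.
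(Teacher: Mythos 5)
Your proposal is correct and takes essentially the same route as the paper's proof: reduce commutativity of the cube to the hexagon of 2-morphisms, observe that the unknown front face appears in exactly one hexagon edge whose two companions along that path are the left and bottom faces (whiskered, hence still invertible), and solve the hexagon equation uniquely for that edge. The paper's version is terser---it writes the hexagon vertices as $xy$, $yx$, etc., silently absorbing the identity $z$-edge, whereas you spell out the un-whiskering through the unitors---but the argument is the same.
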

\begin{proof}
The graph of 2-morphisms is
\[
\stik{1}{
\& yx \ar{r}[sloped]{\sim} \& yzx \ar{dr}[sloped]{\sim}\\
xy \ar{ur}[sloped]{\alpha} \ar{dr} \& \& \& zyx\\
\& xzy \ar{r}\& zxy \ar{ur}
}
\]
so the morphism $\alpha:xy\rightarrow yx$ that makes this diagram commute exists and is unique.
\end{proof}

\subsection{Higher dimensional cubes}
In the same way we can orient the 2-morphisms on any $k$-cube by ordering the coordinates as $x_1,\ldots,x_k$.

For instance for a 4-cube the resulting diagram is a 3-dimensional shape with 8 faces which are hexagons and 6 faces which are squares (a \emph{truncated octahedron}). The entire diagram commutes iff each face commutes. The hexagons correspond to the sub 3-cubes, and the squares are related to the four-interchange law in a bicategory. So we see that a 4-cube in a bicategory commutes iff every sub 3-cube in it commutes.

More generally we have
\begin{Theorem}[Gray, \cite{graycoherence}]
A $k$-cube in a bicategory commutes iff every sub 3-cube in it commutes.
\end{Theorem}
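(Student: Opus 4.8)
The plan is to recast the statement as a fact about the combinatorics of the \emph{permutohedron}, and then feed in one standard piece of group theory. Fix the coordinate ordering $x_1 < \dots < x_k$ of the $k$-cube. A maximal monotone path through the cube, read off as the sequence of directions traversed, is exactly a total order on $\{x_1,\dots,x_k\}$, i.e.\ an element of $S_k$; the source vertex gives $x_1\cdots x_k$ and the sink vertex gives $x_k\cdots x_1$. An \emph{elementary} $2$-morphism between two such paths that differ by swapping two adjacent directions $x_i x_j$ (with $i<j$) into $x_j x_i$ is obtained from a single $2$-face of the cube, whiskered on the left and right by the identities of the surrounding $1$-morphisms. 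Thus the whole diagram of $2$-morphisms attached to the cube is supported on the $1$-skeleton of the permutohedron, directed according to the weak (inversion) order from $x_1\cdots x_k$ to $x_k\cdots x_1$, exactly as the hexagon \eqref{app:2morphoncube} is the case $k=3$. ``The $k$-cube commutes'' means precisely that any two directed paths with the same endpoints compose to equal $2$-morphisms.

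The next step is to identify the $2$-cells of the permutohedron and match them to the two elementary coherences available in any bicategory. The $2$-faces are of exactly two types. First, \emph{hexagons}, coming from three consecutive letters $x_a, x_b, x_c$ in the current reading being permuted among themselves while the rest are held fixed: whiskering the letters before and after these three by their identities, such a hexagon is literally the commutativity diagram of the sub-$3$-cube in the directions $a,b,c$ based at the appropriate vertex, so \emph{hexagon $2$-faces} $\leftrightarrow$ \emph{sub-$3$-cubes}. Second, \emph{squares}, coming from two disjoint adjacent swaps $x_i x_j \to x_j x_i$ and $x_p x_q \to x_q x_p$ performed on non-overlapping pairs of consecutive positions; these squares commute \emph{automatically} by the middle-four interchange law of the bicategory, so they impose no condition whatsoever. (Checking that the square is precisely an instance of interchange, once the outer whiskerings are stripped, is the same bookkeeping already used implicitly in \autoref{app:cubecommutes}.)

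Now I invoke the one genuinely nontrivial input: the $2$-skeleton of the permutohedron --- its vertices, edges, hexagons and squares --- is simply connected. Equivalently, by the Matsumoto--Tits theorem for the symmetric group, any two reduced expressions for a fixed permutation are related by a sequence of braid moves (the hexagons) and commutation moves (the squares); geometrically this is just that the permutohedron is the boundary of a convex polytope, whose $2$-skeleton is simply connected for $k \ge 3$ (the cases $k\le 2$ being vacuous). Given two directed paths $P,Q$ with common endpoints, the loop $P\bar Q$ is therefore a composite of boundaries of hexagon- and square-faces. Along each square-face the two composites are already equal by interchange; along each hexagon-face they are equal precisely when the corresponding sub-$3$-cube commutes. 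Hence if all sub-$3$-cubes commute, then the composites along $P$ and along $Q$ agree, so the $k$-cube commutes; conversely each sub-$3$-cube is itself such a pair of paths, so commutativity of the $k$-cube forces every sub-$3$-cube to commute, giving the desired ``iff''.

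I expect the main obstacle to be purely organizational rather than conceptual: making the whiskering bookkeeping precise, so that ``a single $2$-face inserted at a given stage of a monotone path'' is a well-defined $2$-morphism independent of the bracketing of the surrounding $1$-morphisms. This requires invoking the associativity coherence (Mac Lane) of the ambient bicategory to suppress associators, and carefully tracking orientations so that each rank-$3$ parabolic hexagon is identified with the correctly based sub-$3$-cube. Once that identification is clean, the two local coherences (interchange for squares, sub-$3$-cube commutativity for hexagons) plug directly into the simple connectivity of the permutohedron, which carries all the real weight of the argument and is the reason the statement reduces to dimension three; see \cite{graycoherence}.
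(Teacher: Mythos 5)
Your proposal is correct and follows essentially the same route as the paper: the paper's own treatment identifies the diagram of $2$-morphisms attached to a $k$-cube with (the $1$-skeleton of) the permutohedron --- explicitly, for $k=4$, the truncated octahedron --- observes that its $2$-faces are hexagons corresponding to sub-$3$-cubes and squares disposed of by the four-interchange law, and then simply cites Gray for the general $k$ rather than proving it. What you add is precisely the glue the paper delegates to the citation: the identification for arbitrary $k$, and the combinatorial lemma showing that any two directed paths with common endpoints are related by moves across hexagon and square faces. One caution on how you phrase that lemma: since the $2$-morphisms filling the faces of a cube need not be invertible (a point the paper relies on elsewhere, e.g.\ in its Beck--Chevalley squares), ``simple connectivity of the $2$-skeleton'' is not by itself sufficient, because one cannot form and cancel the loop $P\bar{Q}$ of composites of non-invertible $2$-cells. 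What is genuinely needed is the directed statement --- any two reduced words for a fixed permutation are connected by braid and commutation moves, each replacing one whole directed side of a $2$-face by the other --- and since every directed path in the weak order is automatically reduced, your Matsumoto--Tits invocation supplies exactly this. So the argument stands; just present Matsumoto--Tits, rather than its topological paraphrase, as the operative input.
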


\section{The Beck-Chevalley condition for squares and cubes}
\label{sec:BC}
\label{app:BeckChevalley}
Let $\CCC$ be a bicategory, and consider a square (sometimes called a \emph{quintet})
\begin{equation}
\label{bcsquarebefore}
\tik
A\arrow[]{d}{f} \arrow[]{r}{g}& B \arrow[]{d}{h} \arrow[shorten >=0.4cm,shorten <=0.4cm,Rightarrow]{dl}[above,sloped]{\alpha} \\ 
C \arrow[]{r}{i} & D
\tak
\end{equation}
i.e. a 2-morphism $\alpha:h\circ g\rightarrow i\circ f$ (which is not necessarily an isomorphism).

Suppose that the verticals $h,f$ both have right adjoints $h_R,f_R$, with given unit-counit pairs, then we can form the square (called the \emph{right mate} of the above square)
\[
\tik
A \arrow[]{r}{g}\arrow[blue,shorten >=0.4cm,shorten <=0.4cm,Rightarrow]{dr}[above,sloped]{\alpha_R}& B   \\ 
C \arrow[dashed,blue,thick]{u}{f_R} \arrow[]{r}{i} & D \arrow[dashed,blue,thick]{u}[right]{h_R}
\tak
\]
where $\alpha_R$ is the composition
\[
g\circ f_R\rightarrow h_R\circ h \circ g \circ f_R \xrightarrow{\alpha} h_R \circ i \circ f \circ f_R \rightarrow h_R \circ i
\]
\begin{Definition}
A square as in \eqref{bcsquarebefore} with 2-morphism $\alpha$ is said to satisfy the right \emph{Beck-Chevalley condition} if $\alpha_R$ is invertible.
\end{Definition}
Similarly, if the horizontals $g,i$ both have left adjoints, we can define the left Beck-Chevalley condition via the left mate square
\[
\tik
A\arrow[]{d}{f} & B\arrow[dashed,red,thick]{l}[above]{g_L} \arrow[]{d}{h}  \\ 
C  & D\arrow[dashed,red,thick]{l}[below]{i_L} \arrow[red,shorten >=0.4cm,shorten <=0.4cm,Rightarrow]{ul}[above,sloped]{\alpha_L}
\tak
\]
\begin{Remark}
Note that a square satisfies the right BC condition iff it satisfies the left BC condition, when both are defined, hence we can omit the words \emph{left} or \emph{right}.
\end{Remark}

To get relations between mates of squares we need to look at cubes.

\begin{Lemma}
\label{lem:cubemate}
Consider a commutative cube (see \autoref{app:commcube} for details)
\begin{align*}
\stik{0.7}{
{} \& {} \& {}\\
{} \ar{ur}{\scalebox{2}{$z$}} \ar{rr}{\scalebox{2}{$x$}} \ar{dd}{\scalebox{2}{$y$}} \& {} \& {}\\
{} \& {} \& {} \\
{} \& {} \& {}
}
&
{}
&
\stik{0.7}{
{} \& {} \ar{rr}[name=topback,below]{} \ar{dd}[name=backleft,sloped,xshift=-1.5pc]{} \& {} \& {} \ar{dd}\\
{} \ar[crossing over]{rr}[name=topfront,above]{}\ar{dd}[name=frontleft]{} \ar{ur}[name=topleft,below]{} \& {} \& {} \ar{dd}[name=frontright,sloped,xshift=1pc]{} \ar{ur}[name=topright,below]{} \& {}\\
{} \& {} \ar{rr} \& {} \& {}\\
{} \ar{rr}[name=bottomfront,below]{} \ar{ur}[name=bottomleft,sloped,yshift=-0.2pc]{} \& {} \& {} \ar{ur}
\arrow[Rightarrow,to path={(topright) to[bend left] (topback)}]{}{}
\arrow[Rightarrow,to path={(frontright) to[bend right] (bottomfront)}]{}{}
\arrow[Rightarrow,crossing over,to path={(frontright) to[bend right] (topright)}]{}{}
\arrow[Rightarrow,to path={(bottomfront) to[bend right] (bottomleft)}]{}{}
\arrow[Rightarrow,to path={(bottomleft) to[bend left] (backleft)}]{}{}
\arrow[Rightarrow,to path={(topback) to[bend left] (backleft)}]{}{}
\latearrow{/tikz/commutative diagrams/crossing over}{2-3}{4-3}{}
\latearrow{/tikz/commutative diagrams/crossing over}{2-1}{2-3}{}
\latearrow{/tikz/commutative diagrams/crossing over}{2-3}{1-4}{}
}
\end{align*}
and suppose that all arrows in the $x$ (resp. $z$) direction have left (resp. right) adjoints, with given unit/counit. Then the left (right) mate of the cube, i.e. the cube obtained from taking the left (right) mates of all faces involving direction $x$ ($z$) arrows, commutes. Explicitly, they are the following cubes
\begin{align*}
\stik{0.7}{
{} \& {} \& {} \& {} \& {}\\
{} \& {} \& {} \ar{ur}{\scalebox{2}{$z$}} \ar[dashed,red,thick]{ll}[above]{\scalebox{2}{$x_L$}} \ar{dd}{\scalebox{2}{$y$}} \& {} \& {}\\
{} \& {} \& {} \& {} \& {} \\
{} \& {} \& {} \& {} \& {}
}
&
{}
&
\stik{0.8}{
{} \& {} \ar[leftarrow,dashed,red,thick]{rr}[name=topback,below]{} \ar{dd}[name=backleft,sloped,xshift=-1.5pc]{} \& {} \& {} \ar{dd}[name=backright,sloped,left]{}\\
{} \ar[leftarrow,dashed,red,thick,crossing over]{rr}[name=topfront,above]{}\ar{dd}[name=frontleft]{} \ar{ur}[name=topleft,below,sloped,xshift=0.5pc]{} \& {} \& {} \ar{dd}[name=frontright,sloped,xshift=1pc]{} \ar{ur}[name=topright,below]{} \& {}\\
{} \& {} \ar[leftarrow,dashed,red,thick]{rr}[name=bottomback,above]{} \& {} \& {}\\
{} \ar[leftarrow,dashed,red,thick]{rr}[name=bottomfront,above]{} \ar{ur}[name=bottomleft,sloped,yshift=-0.2pc]{} \& {} \& {} \ar{ur}[name=bottomright,sloped,above]{}
\arrow[Rightarrow,red,to path={(bottomright) to[bend right] (bottomfront)}]{}{}
\arrow[Rightarrow,red,crossing over,to path={(bottomfront) to[bend right] (frontleft)}]{}{}
\arrow[Rightarrow,to path={(frontleft) to[bend right] (topleft)}]{}{}
\arrow[Rightarrow,crossing over,to path={(bottomright) to[bend left] (backright)}]{}{}
\arrow[Rightarrow,red,to path={(backright) to[bend left] (topback)}]{}{}
\arrow[Rightarrow,red,crossing over,to path={(topback) to[bend left] (topleft)}]{}{}
\latearrow{/tikz/commutative diagrams/crossing over}{2-3}{4-3}{}
\latearrow{/tikz/commutative diagrams/leftarrow,dashed,red,thick,/tikz/commutative diagrams/crossing over}{2-1}{2-3}{}
\latearrow{/tikz/commutative diagrams/crossing over}{2-3}{1-4}{}
} \\
\stik{0.7}{
{} \& {} \& {} \& {} \& {}\\
{} \& {} \& {} \ar[dashed,thick,blue]{dl}[above,xshift=-0.5pc]{\scalebox{2}{$z_R$}} \ar{rr}[above]{\scalebox{2}{$x$}} \ar{dd}{\scalebox{2}{$y$}} \& {} \& {}\\
{} \& {} \& {} \& {} \& {} \\
{} \& {} \& {} \& {} \& {}
}
&
{}
&
\stik{0.8}{
{} \& {} \ar{rr}[name=topback,below]{} \ar{dd}[name=backleft,sloped,xshift=-1.5pc]{} \& {} \& {} \ar{dd}[name=backright,sloped,left,xshift=0.5pc]{}\\
{} \ar[crossing over]{rr}[name=topfront,below,xshift=1pc]{} \ar{dd}[name=frontleft,sloped,xshift=-1pc]{} \ar[leftarrow,dashed,blue,thick]{ur}[name=topleft,above,sloped,xshift=0.5pc]{} \& {} \& {} \ar{dd}[name=frontright,sloped,xshift=1pc]{} \ar[leftarrow,dashed,blue,thick]{ur}[name=topright,above]{} \& {}\\
{} \& {} \ar{rr}[name=bottomback,above,xshift=-1pc]{} \& {} \& {}\\
{} \ar{rr}[name=bottomfront,above]{} \ar[leftarrow,dashed,blue,thick]{ur}[name=bottomleft,sloped,right]{} \& {} \& {} \ar[leftarrow,dashed,blue,thick]{ur}[name=bottomright,sloped,below]{}
\arrow[Rightarrow,to path={(backright) to[bend right] (bottomback)}]{}{}
\arrow[Rightarrow,blue,to path={(frontleft) to[bend left] (bottomleft)}]{}{}
\arrow[Rightarrow,blue,to path={(bottomleft) to[bend right] (bottomback)}]{}{}
\arrow[Rightarrow,blue,to path={(topfront) to[bend left] (topright)}]{}{}
\arrow[Rightarrow,blue,to path={(topright) to[bend right] (backright)}]{}{}
\arrow[Rightarrow,crossing over,to path={(topfront) to[bend left] (frontleft)}]{}{}
\latearrow{/tikz/commutative diagrams/crossing over}{2-3}{4-3}{}
\latearrow{/tikz/commutative diagrams/crossing over}{2-1}{2-3}{}
\latearrow{/tikz/commutative diagrams/leftarrow,dashed,blue,thick,/tikz/commutative diagrams/crossing over}{2-3}{1-4}{}
\latearrow{/tikz/commutative diagrams/leftarrow,dashed,blue,thick}{4-3}{3-4}{}
}
\end{align*}
Using the description from Appendix $\ref{app:commcube}$ the first cube corresponds to the ordering of coordinates $y<z<x$ and the second to the ordering $z<x<y$.
\end{Lemma}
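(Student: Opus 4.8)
The plan is to deduce the lemma from the $2$-dimensional mate construction of \autoref{sec:BC} together with the basic compatibility of that construction with the pasting of squares, rather than by reorganising the full hexagon of $2$-cells by hand. Recall from \autoref{app:commcube} that the commutativity of the cube is precisely the equality of the two composites running along the two routes of the hexagon \eqref{app:2morphoncube}. Tracing the six faces one checks that each route is a vertical pasting of exactly three faces, one of each type: an $xy$-face, an $xz$-face and a $yz$-face. Only the first two contain an edge in the $x$-direction, and these are exactly the four faces whose left mate we form; the two $yz$-faces carry no $x$-edge and are left untouched.

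The key steps are as follows. First I would record the two instances of mate--pasting compatibility that are needed: that forming the left mate along a family of adjunctions commutes with pasting two squares along an edge transverse to the adjunction direction (where the mate passes through by whiskering), and with pasting two squares along an edge in the adjunction direction (where the two triangle identities of the $x$-adjunctions produce the required cancellation of an $\eta$--$\epsilon$ pair). Both follow directly from the explicit formula for $\alpha_L$ in \autoref{sec:BC} and the interchange law. Second, I would present each route of the hexagon as such a pasting of its three faces, whiskered by the appropriate $x$-, $y$- and $z$-edges. Third, I would apply the left-mate-along-$x$ operation to both routes; by the compatibility statements it carries the composite of the three faces to the composite of the mates of the two $x$-faces with the unchanged $yz$-face, assembled into the two routes of the hexagon of the new cube. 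Since the two original routes agree (the cube commutes), their images agree, which is exactly the commutativity of the mate cube. Reading off the orientations of the resulting mate $2$-cells gives the claimed reordering $y<z<x$, and the dual argument with right adjoints in the $z$-direction gives $z<x<y$.

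The main obstacle is the bookkeeping of whiskering and orientation: one must check that the transverse-pasting compatibility really applies at each junction (in particular that the lone $yz$-face in each route is whiskered on the correct side by an $x$-edge that is being replaced by its left adjoint $x_L$), and that the $\eta$--$\epsilon$ cancellations from the parallel-pasting step are shared correctly along the four $x$-edges so that no spurious unit or counit survives. An equivalent, more computational route would bypass the abstract compatibility and instead substitute the definition of each mate $2$-cell into the hexagon of the new cube and simplify using the triangle identities and the four-interchange law, in the style of the proof of \autoref{prop:bccart}; the commutativity then reduces, face by face, to that of the original hexagon.
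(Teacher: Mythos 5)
Your proposal is correct. The paper's own proof is a single sentence: it asserts that the lemma follows by a direct computation in a 2-category, using the 4-interchange law and the fact that the original cube commutes --- which is essentially the computational alternative you sketch in your closing paragraph. Your primary route is organized differently: instead of substituting the mate formulas of \autoref{sec:BC} into the hexagon \eqref{app:2morphoncube} of the new cube and simplifying by hand, you factor the argument through the two standard compatibilities of the mate construction with pasting (pasting transverse to the adjunction direction, where the mate simply passes through a whiskering, and pasting along the adjunction direction, where a unit--counit pair for the intermediate $x$-edge cancels by the triangle identities). Your structural observation is exactly right: each route of the hexagon is a composite of one $xy$-face, one $xz$-face and one $yz$-face, of which only the first two carry $x$-edges and are mated, while the $yz$-face merely gets re-whiskered by $x_L$; once the two compatibility lemmas are in place, the commutativity of the mate cube follows formally from that of the original cube, and the orderings $y<z<x$ and $z<x<y$ are read off from the orientations of the mated 2-cells. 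What your organization buys is modularity --- the compatibility lemmas are the standard functoriality of the mate correspondence, are reusable elsewhere in the paper (e.g.\ in the proof of \autoref{prop:bccart}, which repeatedly interchanges units and counits past whiskered faces), and keep the orientation bookkeeping localized; what the paper's terser route buys is only brevity, since the underlying ingredients (triangle identities, interchange, commutativity of the original hexagon) are identical in both treatments.
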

\begin{proof}
In a 2-category this is a direct computation, using the 4-interchange law, and the fact that the original cube commutes. 
\end{proof}

\section{Adjunction of inverse and direct image for sheaves on finite sets}
\label{app:pullpushadjsets}
Let $\varphi:S\rightarrow T$ be a map of sets, and let $\varphi^*,\varphi_*$ be the functors of inverse and direct image between the categories of sheaves. Let $V\in\Sh(S),W\in\Sh(T),A\subset S,B\subset T$. Note that for a sheaf on a finite set we have \[
V(A)=\prod_{a\in A}V_a
\]
so we have the following formulas:
\begin{gather}
\varphi^*W(A)=\prod_{a\in A}W_{\varphi(a)}\\
\varphi_*V(B)=V(\varphi^{-1}(B))
\end{gather}
and so
\begin{gather}
\varphi^*\varphi_*V(A)=\prod_{a\in A}V(\varphi^{-1}(\varphi(a)))\\
\varphi_*\varphi^*W(B)=\prod_{a,\varphi(a)\in B}W_{\varphi(a)}
\end{gather}
The unit and counit of the adjunction $\varphi^*\dashv\varphi_*$ are given by maps
\begin{gather}
\epsilon_R: \varphi^*\varphi_*V(A)=\prod_{a\in A}V(\varphi^{-1}(\varphi(a))) \rightarrow \prod_{a\in A}V_a=V(A)\\
\eta_R: W(B)=\prod_{b\in B}W_b\rightarrow \prod_{a,\varphi(a)\in B}W_{\varphi(a)}=\varphi_*\varphi^*W(B)
\end{gather}
where $\epsilon_R$ is given by restrictions and $\eta_R$ by the diagonal maps.

There is also an adjunction $\varphi_*\dashv\varphi^*$ given by maps
\begin{gather}
\eta_L: \varphi^*\varphi_*V(A)=\prod_{a\in A}V(\varphi^{-1}(\varphi(a))) \leftarrow \prod_{a\in A}V_a=V(A)\\
\epsilon_L: W(B)=\prod_{b\in B}W_b\leftarrow \prod_{a,\varphi(a)\in B}W_{\varphi(a)}=\varphi_*\varphi^*W(B)
\end{gather}
where $\eta_L$ is given by extension by 0, and $\epsilon_L$ is given by the sum maps.


\section{Limits}
\label{app:PBcont}

\subsection{Weak adjoints}
\begin{Definition}[see \cite{Gray2adj}]
Let $F:\CCC\rightarrow\DDD$, $G:\DDD\rightarrow\CCC$ be functors of bi-categories, then we say that $G$ is \emph{weak right adjoint} to $G$ if there is a natural functor
\[
\Hom(FC,D)\leftarrow\Hom(C,GD)
\]
which admits a right adjoint.
\end{Definition}

We will mostly be interested in the case where $\DDD$ is trivial, where this amounts to saying that we have some chosen object $G(\point)$, and for any $C$ the map $\Hom(C,G(\point))\rightarrow \point$ has a right adjoint, which is the same as saying that $\Hom(C,G(\point))$ has a final object. 
\subsection{Conical limits}
Let $K$ be a small 1-category and $\CCC$ a 1-category.

Let $F:K\rightarrow \CCC$ be a functor. The limit of $F$ can be thought of as follows (cf. \cite{grandisparedouble}):

Denote also by $F$ the functor $\point\rightarrow\CCC^K$ which sends $\point$ to $F$, and by $diag$ the diagonal embedding $\CCC\rightarrow\CCC^K$. Then we may form the comma category
\[
\stik{1}{
(diag\downarrow F) \ar{d} \ar{r}[above]{p} \& \CCC   \ar{d}{diag}\\
\point \ar{r}{F} \&  \CCC^K
}
\]
Note that a final object $L$ in $(diag\downarrow F)$ is exactly a limit of $F$ together with all the relevant morphisms to the diagram. If we want just the object, then it is given by $p(L)$. So the condition of having a limit is the same as the requirement that $(diag\downarrow F)$ has a final object, which is the same as requiring that the map  $(diag\downarrow F)\rightarrow \point$ has a right adjoint.

\subsection{Weak limits and \comma squares}
\label{app:WeightedLimits}
In the above, we only dealt with conical limits, i.e. we used the map $\CCC\xrightarrow{diag}\CCC^K$ which comes from the map $K\rightarrow\point$.

However, we can replace it by any map $K\xrightarrow{W} L$, and form the square
\[
\stik{1}{
(W^*\downarrow F) \ar{d} \ar{r}[above]{p} \& \CCC   \ar{d}{W^*}\\
\point \ar{r}{F} \&  \CCC^K
}
\]
We now say that $F$ admits a $W$-limit if the map $(W\downarrow F)\rightarrow \point$ admits a weak right adjoint (this is closely related to the notion of \emph{weighted} limit appearing in the literature).

\begin{Example}[\comma Squares]
Let $K\rightarrow L$ be the map 
\[
\stik{1}{
{} \&  \bullet\ar{d} \\
 \bullet\ar{r} \& \bullet
}
\xhookrightarrow{W}
\stik{1}{
 \bullet \ar{r} \ar{d} \&  \bullet\ar{d} \ar[Rightarrow,shorten <=0.7em,shorten >=0.7em]{dl}\\
 \bullet\ar{r} \& \bullet
}
\]
and let $\CCC$ be a bicategory. 

A map $K\xrightarrow{F}\CCC$ is a "bottom right corner", and we call a $W$-limit of $F$ an \comma square with this bottom right corner (if $\CCC$ is a 1-category this conicides with the usual notion of Cartesian square).
\end{Example}

Let us analyze this in more detail:

Let $F=\stik{1}{
{} \&  A\ar{d}{f} \\
B\ar{r}{g} \& C
}$ be a bottom right corner in $\CCC$, and consider the square:
\[
\stik{1}{
(W^*\downarrow F) \ar{d} \ar{r}[above]{p} \& \CCC^\square  \ar{d}{W^*}\\
\point \ar{r}{F} \&  \CCC^\lrcorner
}
\]

Objects of $(W^*\downarrow F)$ are pairs $(S\in \CCC^\square,\alpha:W^*S\rightarrow F)$. So they can be thought of as diagrams:
\[
\stik{0.7}{
{} \& Z \ar{rr}[name=topback,below]{} \ar{dd}[name=backleft,sloped,xshift=-1.5pc]{} \& {} \& W \ar{dd}\\
X \ar[crossing over]{rr}[name=topfront,above]{} \ar{ur}[name=topleft,below]{} \& {} \& Y \ar{dd}[name=frontright,sloped,xshift=1pc]{} \ar{ur}[name=topright,below]{} \& {}\\
{} \& B \ar{rr} \& {} \& C\\
 \& {} \& A \ar{ur}
\arrow[Rightarrow,to path={(topright) to[bend left] (topback)}]{}{}
\arrow[Rightarrow,crossing over,to path={(frontright) to[bend right] (topright)}]{}{}
\arrow[Rightarrow,to path={(topback) to[bend left] (backleft)}]{}{}
\latearrow{/tikz/commutative diagrams/crossing over}{2-3}{4-3}{}
\latearrow{/tikz/commutative diagrams/crossing over}{2-1}{2-3}{}
\latearrow{/tikz/commutative diagrams/crossing over}{2-3}{1-4}{}
}
\]
A morphism in  $(W^*\downarrow F)$ is then a commutative diagram
\[
\stik{0.7}{
{} \& Z' \ar{rr} \ar{dd} \& {} \& W' \ar{dd}\\
X' \ar[crossing over]{rr} \ar{dd} \ar{ur} \& {} \& Y' \ar{dd} \ar{ur} \& {}\\
{} \& Z \ar{rr}[name=topback,below]{} \ar{dd}[name=backleft,sloped,xshift=-1.5pc]{} \& {} \& W \ar{dd}\\
X \ar[crossing over]{rr}[name=topfront,above]{} \ar{ur}[name=topleft,below]{} \& {} \& Y \ar{dd}[name=frontright,sloped,xshift=1pc]{} \ar{ur}[name=topright,below]{} \& {}\\
{} \& B \ar{rr} \& {} \& C\\
 \& {} \& A \ar{ur}
}
\]
together with a map
\[
\stik{0.7}{
{} \& Z' \ar{rr} \ar{dd} \& {} \& W' \ar{dd}\\
 \& {} \& Y' \ar{dd} \ar{ur} \& {}\\
{} \& Z \ar{rr}[name=topback,below]{} \ar{dd}[name=backleft,sloped,xshift=-1.5pc]{} \& {} \& W \ar{dd}\\
 \& {} \& Y \ar{dd}[name=frontright,sloped,xshift=1pc]{} \ar{ur}[name=topright,below]{} \& {}\\
{} \& B \ar{rr} \& {} \& C\\
 \& {} \& A \ar{ur}
} \rightarrow \stik{1}{
{} \& Z' \ar{rr}[name=topback,below]{} \ar{dd}[name=backleft,sloped,xshift=-1.5pc]{} \& {} \& W' \ar{dd}\\
 \& {} \& Y' \ar{dd}[name=frontright,sloped,xshift=1pc]{} \ar{ur}[name=topright,below]{} \& {}\\
{} \& B \ar{rr} \& {} \& C\\
 \& {} \& A \ar{ur}
}
\]

\end{document}